\theoremstyle{plain}
\theoremstyle{definition}
\newtheorem{theorem}{Theorem}[section]
\newtheorem{lemma}[theorem]{Lemma}
\newtheorem{proposition}[theorem]{Proposition}
\newtheorem{corollary}[theorem]{Corollary}
\newtheorem{exam}[theorem]{Example}
\newtheorem{counterexample}[theorem]{Counterexample}
\newtheorem{note}[theorem]{Note}
\newtheorem{convention}[theorem]{Convention}
\newtheorem{rem}[theorem]{Remark}
\theoremstyle{remark}
\numberwithin{equation}{section}
\newcommand{\SP}{\: \: \: \: \:}
\newcommand{\eset}{{\rm ent}_{\rm set}}
\newcommand{\ecset}{{\rm ent}_{\rm cset}}
\title{Set--theoretical entropies of weighted generalized shifts}
\author[F. Ayatollah Zadeh Shirazi, A. Hosseini, L. Mousavi, R. Rezavand]{Fatemah Ayatollah Zadeh Shirazi,  
Arezoo Hosseini, \\ Lida Mousavi, Reza Rezavand}
\begin{document}
\begin{abstract}
In this paper for a finite field $F$, a nonempty set
$\Gamma$, a self--map $\varphi:\Gamma\to\Gamma$ and a weight vector
$\mathfrak{w}\in F^\Gamma$, we show that the set--theoretical entropy
of the weighted generalized shift
$\sigma_{\varphi,\mathfrak{w}}:F^\Gamma\to F^\Gamma$ is either
zero or $+\infty$, moreover it is equal to zero if and only if
$\sigma_{\varphi,\mathfrak{w}}$ is quasi--periodic. On the other
hand after characterizing all conditions under which
$\sigma_{\varphi,\mathfrak{w}}:F^\Gamma\to F^\Gamma$ is of finite
fibre, we show that the contravariant set--theoretical entropy of a finite
fibre $\sigma_{\varphi,\mathfrak{w}}:F^\Gamma\to F^\Gamma$
depends only on $\varphi$ and
$\rm{supp}(\mathfrak{w})$. In final sections we study the restriction of  $\sigma_{\varphi,\mathfrak{w}}$
to the direct sum $\mathop{\bigoplus}\limits_{\Gamma}F$.
\end{abstract}
\maketitle
\noindent {\small {\bf 2020 Mathematics Subject Classification:}  03E20, 37B99  \\
{\bf Keywords:}}  Contravariant set-- theoretical entropy, Covariant set--theoretical entropy, Infinite anti--orbit number, Infinite orbit number, Weighted generalized shift.
\section{Introduction}\label{sec1}
\noindent As it has been mentioned in several texts ``Entropy''
could be defined as the (numerical) value of dynamicity/
uncertainty/ complexity in a system. Let's name some types of
entropy:  measure entropy~\cite{kolomogrof, sinai, walters},
topological entropy~\cite{adler1, adler}, algebraic
entropy~\cite{algd, weiss}, adjoint entropy~\cite{adjoint}.
\\
Bernoulli shifts and entropy are the common interests of many not only old researches~\cite{orn1, orn2}, but also new ones~\cite{neww}.
According to the preface of~\cite{bern},
Ornstein (and others) have shown
``that a large class of transformations of physical
and mathematical interest are isomorphic to Bernoulli shifts''. 
During the works of Ornstein and other mathematicains, we sense strongly that
Bernoulli shifts are classical and the simplest examples in ergodic and entropy theory (see e.g. the third paragraph of~\cite{orn1}).
Without any doubt, the left Bernoulli shift
$\mathop{\{1,\ldots,k\}^{\mathbb N}\to\{1,\ldots,k\}^{\mathbb N}}\limits_{(x_n)_{n\geq1}\mapsto(x_{n+1})_{n\geq1}}$
and two--sided Bernoulli shift $\mathop{\{1,\ldots,k\}^{\mathbb Z}\to\{1,\ldots,k\}^{\mathbb Z}}\limits_{(x_n)_{n\in\mathbb{Z}}\mapsto(x_{n+1})_{n\in\mathbb{Z}}}$ have great part to inspiring generalized shift.
``Generalized shifts'' form a class of examples inspired by Bernoulli
shifts, Bernoulli shifts are special generalized shifts and there are
several papers dedicated to compute the entropy of generalized
shifts~\cite{qm, md, nili}. ``Weighted generalized shifts'' are a further generalization of generalized shifts, 
in functional analysis and complex analysis courses 
they are known as weighted composition operators, under a different point of view
 (see subsection~\ref{najaf20}).
\\
Amongst different types of entropy, we study the covariant and the contravariant set--theoretical entropies of self--maps of a nonempty set
(see subsection~\ref{najaf10}) with a focus on the weighted generalized shifts.
\\
Let's bring our main results in this stage accompanying with their full assumptions.
For a finite field $F$, a nonempty set $\Gamma$, an arbitrary self--map $\varphi:\Gamma\to\Gamma$ and a weight vector
$\mathfrak{w}=(\mathfrak{w}_\alpha)_{\alpha\in\Gamma}\in F^\Gamma$ also
$\rm{supp}(\mathfrak{w}):=\{\alpha\in\Gamma: \mathfrak{w}_\alpha\neq0\}$, our results
on the weighted generalized shift $\mathop{\sigma_{\varphi,\mathfrak{w}}:F^\Gamma\to F^\Gamma\SP\SP}\limits_{
(x_\alpha)_{\alpha\in\Gamma}\mapsto(\mathfrak{w}_\alpha x_{\varphi(\alpha)})_{\alpha\in\Gamma}}$
 can be divided into  types ``structural results'' and ``numerical results''. As the structural results
we show:
\begin{itemize}
\item[-] $\sigma_{\varphi,\mathfrak{w}}:F^\Gamma\to F^\Gamma$ is quasi--periodic if and only if it is pointwise quasi--periodic
	(see Corollary~\ref{maryam60} also for definitions of quasi periodicity and pointwise quasi--periodicity see \ref{farangi}),
	moreover, this proposition is not valid in general self--maps by Example~\ref{kheybar40},
\item[-]  the weighted generalized shift $\sigma_{\varphi,\mathfrak{w}}:F^\Gamma\to F^\Gamma$ is of finite fibre,
	if and only if $\Gamma\setminus\varphi(\rm{supp}(\mathfrak{w}))$ is finite,
\item[-] $\sigma_{\varphi,\mathfrak{w}}(
	\mathop{\bigoplus}\limits_{\Gamma}F)\subseteq \mathop{\bigoplus}\limits_{\Gamma}F$  if and only if 
	$\varphi\restriction_{{\rm supp}(\mathfrak{w})}:{\rm supp}(\mathfrak{w})\to\Gamma$ is of finite fibre.
	Moreover, in this case, 	$\sigma_{\varphi,\mathfrak{w}}\restriction_{\mathop{\bigoplus}\limits_{\Gamma}F}$ is of finite fibre if and only if 
	$\sigma_{\varphi,\mathfrak{w}}:F^\Gamma\to F^\Gamma$ is of finite fibre,
\end{itemize}
In the numerical results, we compute covariant and 
contravariant set--theoretical entropies of $\sigma_{\varphi,\mathfrak{w}}$,
i.e.:
\begin{itemize}
\item[-] covariant set--theoretical entropy of the weighted generalized shift $\sigma_{\varphi,\mathfrak{w}}:F^\Gamma\to F^\Gamma$ has
no finite values different from zero, moreover it takes value zero if and only if $\sigma_{\varphi,\mathfrak{w}}$ is quasi--periodic,
\item[-] contravariant set--theoretical entropy of the finite fibre weighted generalized shift $\sigma_{\varphi,\mathfrak{w}}:F^\Gamma\to F^\Gamma$ has
no finite values different from zero, as a matter of fact
$\ecset(\sigma_{\varphi,\mathfrak{w}})=+\infty$ if and only if  one of the following conditions occurs:
\begin{itemize}
\item[a.] $\varphi$ has a non--quasi--periodic point in $\bigcap\{\varphi^{-n}({\rm supp}({\mathfrak w})):n\geq0\}$,
\item[b.] all points of $\bigcap\{\varphi^{-n}({\rm supp}({\mathfrak w})):n\geq0\}$ are quasi--periodic points of $\varphi$ and
	$\sup\{{\rm per}(\alpha):\alpha\in {\rm Per}(\varphi)\cap (\bigcap\{\varphi^{-n}({\rm supp}({\mathfrak w})):n\geq0\})\}=+\infty$,
\end{itemize}
\item[-]
In addition whenever $\sigma_{\varphi,\mathfrak{w}}(
	\mathop{\bigoplus}\limits_{\Gamma}F)\subseteq \mathop{\bigoplus}\limits_{\Gamma}F$, then:
\begin{itemize}
\item[a.] covariant set--theoretical entropy of 
	$\sigma_{\varphi,\mathfrak{w}}\restriction_{\mathop{\bigoplus}\limits_{\Gamma}F}:\mathop{\bigoplus}\limits_{\Gamma}F\to
	\mathop{\bigoplus}\limits_{\Gamma}F$ has no finite values different from zero, moreover it takes $+\infty$ if and only if 
	there exists a one to one $\varphi-$anti--orbit sequence in ${\rm supp}(\mathfrak{w})$,
\item[b.] contravariant set--theoretical entropy of  finite fibre
	$\sigma_{\varphi,\mathfrak{w}}\restriction_{\mathop{\bigoplus}\limits_{\Gamma}F}:\mathop{\bigoplus}\limits_{\Gamma}F\to
	\mathop{\bigoplus}\limits_{\Gamma}F$ has no finite values different from zero, moreover it takes $+\infty$ if and only if 
	there exists a one to one $\varphi-$orbit sequence in ${\rm supp}(\mathfrak{w})$.
\end{itemize}
\end{itemize}
\section{Some required backgrounds}
 \noindent Covariant set--theoretical entropy, contravariant set--theoretical entropy and weighted generalized shifts 
 are our main objects in this paper. Let's have a glance at them via the following two subsections which are followed
 by a subsection devoted to primary notations.
\subsection{Background on covariant set--theoretical and contravariant set--theoretical entropies}\label{najaf10}
For a nonempty set $A$ and a self--map $f:A\to A$, we say that the sequence
$S=\{a_n\}_{n\geq1}$ is:
\begin{itemize}
\item an \textit{$f-$orbit}, if for each $n\geq 1$, we have $f(a_n)=a_{n+1}$,
\item an \textit{$f-$anti--orbit}, if for each $n\geq 1$, we have
$f(a_{n+1})=a_n$,
\item  \textit{one to one}  if $a_m\neq a_n$ for all $m>n\geq1$.
\end{itemize}
The \textit{string number} or \textit{infinite orbit number} of self--map $f:A\to A$ is \cite{qm, cmuc}:
{\small \[\mathsf{o}(f):=\sup(\{0\}\cup\{n\geq1:{\rm there \: exist \:}n\:{\rm one \: to \: one\: pairwise\: disjoint\:}f\!-\!{\rm orbits\: in\:}A\}),\]}
and the \textit{antistring number} or \textit{infinite anti--orbit number} of self--map $f:A\to A$ is~\cite{md, cmuc}:
{\small
\[\mathsf{a}(f):=\sup(\{0\}\cup\{n\geq1:\!{\rm there \: exist \:}n \: {\rm one\: to \: one\: pairwise\: disjoint\:}f\!-\!{\rm anti\! -\!orbits\: in\:}A\}).\]}
 For a finite subset $D$ of $A$ let (whereby $\vert D\vert$ we mean the cardinality of $D$):
\[\mathfrak{h}(f,D):={\displaystyle\lim_{n\to\infty}\frac{\vert D\cup f(D)\cup f^2(D)\cup\cdots\cup f^{n-1}(D)\vert}{n}}\]
where the limit exists by \cite[Lemma 2.6]{md}, and we call $\eset(f):=\sup\{\mathfrak{h}(f,D):D$ is a
finite subset of $A\}$, \textit{covariant set--theoretical entropy} of $f:A\to A$ (or briefly 
\textit{set--theoretical entropy} of $f:A\to A$),
moreover, $\eset(f)=\mathsf{o}(f)$~\cite[Proposition 2.16]{md}. Set--theoretical entropy of a
self--map has been introduced for the first time in~\cite{md}.
\\
On the other hand for $f:A\to A$, we have $\bigcap\{f^n(A):n\geq1\}=\bigcup\{D\subseteq A:f(D)=D\}$ so 
 ${\rm
sc}(f)=\bigcap\{f^n(A):n\geq1\}$ as \textit{surjective core} of $f:A\to
A$  is the biggest subset $D$ of $A$, such that
$f\restriction_D:D\to D$ is surjective. 
Also the sequence $\{a_n\}_{n\geq1}$ is an $f-$anti--orbit if and only if it is an 
$f\restriction_{{\rm
sc}(f)}-$anti--orbit, hence $\mathsf{a}(f\restriction_{{\rm
sc}(f)})=\mathsf{a}(f)$.
\\
Let's mention that $f:A\to B$ is of \textit{finite fibre} if $f^{-1}(y)$ is finite for all $y\in B$. 
\\
For a surjective finite fibre
$f:A\to A$ and a finite subset $D$ of  $A$, let
\[\mathfrak{h}^*(f,D):={\displaystyle\limsup_{n\to\infty}\frac{\vert D\cup f^{-1}(D)\cup f^{-2}(D)\cup\cdots\cup f^{-n+1}(D)\vert}{n}}\:,\]
then we call $\ecset(f):=\sup\{\mathfrak{h}^*(f,D):D$ is a finite subset
of $A\}$, \textit{contravariant set--theoretical entropy} of $f$, moreover 
$\ecset(f)=\mathsf{a}(f)$. So for a finite fibre self--map $f:A\to A$, 
we may consider  contravariant set--theoretical entropy of $f$
 as
$\ecset(f):=\mathsf{a}(f\restriction_{{\rm
sc}(f)})=\mathsf{a}(f)$, see~\cite[Definition 3.2.18, Definition 3.2.19, Proposition 3.2.34, Theorem 3.2.39]{anna}.
\subsection{Background on generalized and weighted generalized shifts}\label{najaf20}
Let's recall that for the nonempty sets $M,\Gamma$ and the self--map $\varphi :\Gamma\to\Gamma$,
the \textit{generalized shift}
$\sigma_\varphi:M^\Gamma\to M^\Gamma$ with
$\sigma_\varphi((x_\alpha)_{\alpha\in\Gamma})=(x_{\varphi(\alpha)})_{\alpha\in\Gamma}$
has been introduced for the first time in~\cite{note} as a
generalization of the left Bernoulli shift and the two--sided shift.
Moreover, for a bounded vector $(r_n)_{n\geq1}(\in{\mathbb C}^{\mathbb N})$, the weighted shift
$\sigma:\ell^2\to\ell^2$ with $\sigma((x_n)_{n\geq1})=(r_nx_{n+1})_{n\geq1}$ is of great interest in functional analysis.
\\
Weighted generalized shifts can be considered as a common generalization of weighted shifts and generalized shifts in the following way:
Suppose $M$ is a module over ring $R$, $\Gamma$ is a nonempty
set, $\varphi:\Gamma\to\Gamma$ is an arbitrary self--map, and
$\mathfrak{w}=(\mathfrak{w}_\alpha)_{\alpha\in\Gamma}\in
R^\Gamma$, then we call
$\sigma_{\varphi,\mathfrak{w}}:M^\Gamma\to M^\Gamma$ with
$\sigma_{\varphi,\mathfrak{w}}((x_\alpha)_{\alpha\in\Gamma})=(\mathfrak{w}_\alpha
x_{\varphi(\alpha)})_{\alpha\in\Gamma}$ a \textit{weighted generalized
shift}~\cite{lp}, moreover $\sigma_{\varphi,\mathfrak{w}}={\mathfrak w}\sigma_\varphi$. Also if $1$ is the unit element of $R$ 
and $\mathfrak{w}=(1)_{\alpha\in\Gamma}$, then
$\sigma_{\varphi,\mathfrak{w}}=\sigma_\varphi$.
\\
For the connections between topological (algebraic) entropy and set-- theoretical entropies in generalized shifts see \cite{cmuc}.
In other point of view for computing  the topological, algebraic and set--theoretical entropies of a generalized shift see~\cite{qm, md, gior, nili}. 
\subsection{Primary notations}\label{farangi}
Let's recall that for $f:A\to A$,
\begin{itemize}
\item ${\rm Fix}(f)=\{x\in A:f(x)=x\}$ is the collection of \textit{fixed points} of $f:A\to A$,
\item ${\rm Per}(f)=\{x\in A:\exists n\geq1 \:\:f^n(x)=x\}$ is the collection of  \textit{periodic points} of $f:A\to A$,
\item if $x\in {\rm Per}(f)$, then ${\rm per}(x):=\min\{n\geq1:f^n(x)=x\}$ is the \textit{period} of $x$ (w.r.t. $f$),
\item ${\rm QPer}(f)=\{x\in A:\exists m>n\geq1 \:\:f^n(x)=f^m(x)\}$ is the collection of \textit{quasi--periodic points} of $f:A\to A$,
\item $A\setminus {\rm QPer}(f)$ is the collection of \textit{non--quasi--periodic points} of $f:A\to A$,
\item $f$ is \textit{periodic} if there exists $n\geq1$ with $f^n=id_A$,
\item $f$ is \textit{quasi--periodic} if there exists $n>m\geq1$ with $f^n=f^m$,
\item $f$ is \textit{pointwise periodic} (resp. \textit{pointwise quasi--periodic}) if ${\rm Per}(f)=A$ (resp. ${\rm QPer}(f)=A$).
\end{itemize}
Also note that ${\rm Fix}(f)\subseteq{\rm Per}(f)\subseteq {\rm QPer}(f)$. 
\\
Moreover, $z\in A\setminus {\rm QPer}(f)$ if and only if for all distinct $p,q\geq1$ we have
$f^p(z)\neq f^q(z)$, i.e. $\{f^n(z)\}_{n\geq1}$ is one to one. Therefore $A\setminus {\rm QPer}(f)=\{x\in A:\{f^n(x)\}_{n\geq1}$ is
one to one$\}$. 
\\
Now we show $A\setminus {\rm QPer}(f)=\{x\in A:\{f^n(x)\}_{n\geq1}$ is
infinite$\}$. 
Let $z\in A\setminus {\rm QPer}(f)$, then $\{f^n(z)\}_{n\geq1}$ is one to one, therefore $\{f^n(z)\}_{n\geq1}$ is infinite.
So $A\setminus {\rm QPer}(f)\subseteq \{x\in A:\{f^n(x)\}_{n\geq1}$ is
infinite$\}$. On the other hand if $t\in {\rm QPer}(f)$, choose $p>q\geq1$ with $f^p(t)=f^q(t)$, then 
$\{f^n(t):n\geq1\}=\{f^n(t):1\leq n\leq p\}$ in particular $\{f^n(t)\}_{n\geq1}$ is finite and
${\rm QPer}(f)\subseteq \{x\in A:\{f^n(x)\}_{n\geq1}$ is
finite$\}$. Therefore $\{x\in A:\{f^n(x)\}_{n\geq1}$ is
infinite$\}\subseteq A\setminus {\rm QPer}(f)$.
\begin{exam}\label{kheybar40}
Consider $\eta:\mathbb{N}\to\mathbb{N}$ with 
$\eta(1)=1$, $\eta(j)=j+1$ for $n(n+1)/2<j<(n+1)(n+2)/2$ and $\eta((n+1)(n+2)/2)=n(n+1)/2+1$. So:
\[\begin{array}{l}
\eta(1)=1\:, \\
\eta(2)=3,\:\eta(3)=2,\\
\eta(4)=5,\:\eta(5)=6,\:\eta(6)=4, \\
\vdots 
\end{array}\]
is pointwise periodic and so also pointwise quasi--periodic, however, it is neither periodic nor quasi--periodic.
\end{exam}
\begin{convention}\label{maryam10}
In the following text consider finite field a $F$, a nonempty set
$\Gamma$, a self--map $\varphi:\Gamma\to\Gamma$, a \textit{weight vector}
$\mathfrak{w}=(\mathfrak{w}_\alpha)_{\alpha\in\Gamma}\in F^\Gamma$
and the weighted generalized shift
$\sigma_{\varphi,\mathfrak{w}}:F^\Gamma\to F^\Gamma$ (so
$\sigma_{\varphi,\mathfrak{w}}(x_\alpha)_{\alpha\in\Gamma}=(\mathfrak{w}_\alpha
x_{\varphi(\alpha)})_{\alpha\in\Gamma}$ for each
$(x_\alpha)_{\alpha\in\Gamma}\in F^\Gamma$). For
$L\subseteq\Gamma$ and $x=(x_\alpha)_{\alpha\in\Gamma}\in
F^\Gamma$, let 
\[x^L:=(x_\alpha)_{\alpha\in L}\:.\]
Also, let (where
$\varphi^0=id_\Gamma:\mathop{\Gamma\to\Gamma}\limits_{\alpha\mapsto\alpha}$ is the identity map on $\Gamma$):
\[\mathcal{T}:=\{(n,\alpha):n\geq0,\alpha\in\Gamma, | \{\varphi^i(\alpha):0\leq i\leq n\} |=n+1 ,
\mathop{\prod}\limits_{0\leq i\leq n}\mathfrak{w}_{\varphi^i(\alpha)}\neq0\}.\]
In $\mathcal T$ we search for $(n,\alpha)$s such that 
\[\{(\mathfrak{w}_\alpha x_\alpha,\mathfrak{w}_{\varphi(\alpha)} x_{\varphi(\alpha)},
\cdots,\mathfrak{w}_{\varphi^n(\alpha)} x_{\varphi^n(\alpha)}):x_\alpha,x_{\varphi(\alpha)},\ldots,x_{\varphi^n(\alpha)}\in F\}\]
forms  a linear vector space over
$F$ of dimension $n+1$. Moreover if  $(n,\alpha)\in\mathcal{T}$, then
$(i,\alpha)\in\mathcal{T}$ for each $i\in\{0,1,\ldots,n\}$.
\end{convention}
\noindent In this paper Convention~\ref{maryam10}  is valid for whole of the text, Convention~\ref{maryam20} 
is valid for subsection~\ref{maryam30} and Convention~\ref{kheybar75} is valid for some parts of Section~\ref{maryam40} and
whole of Section~\ref{maryam50}. 
\section{Set--theoretical entropy of $\sigma_{\varphi,\mathfrak{w}}:F^\Gamma\to F^\Gamma$}
\noindent In this section we prove $\eset(\sigma_{\varphi,\mathfrak{w}})\in\{0,+\infty\}$, moreover
$\eset(\sigma_{\varphi,\mathfrak{w}})=0$ if and only if $\sigma_{\varphi,\mathfrak{w}}$ is quasi--periodic.
Hence, $\sigma_{\varphi,\mathfrak{w}}$ is pointwise quasi--periodic if and only if it is quasi--periodic.
Note that, for $x=(x_\alpha)_{\alpha\in\Gamma},y=(y_\alpha)_{\alpha\in\Gamma}\in
F^\Gamma$ we have $xy=(x_\alpha y_\alpha)_{\alpha\in\Gamma}$.
\begin{lemma}\label{kheybar50}
Consider $\mathfrak{u}=({\mathfrak u}_\alpha)_{\alpha\in\Gamma}\in F^\Gamma$ and $\psi:\Gamma\to\Gamma$,
then $\sigma_{\psi,\mathfrak{u}}\circ \sigma_{\varphi,\mathfrak{w}}=\sigma_{\varphi\circ\psi,\mathfrak{u}\sigma_\psi(\mathfrak{w})}$.
\\
Also for all $n\geq1$, we have
$\sigma_{\varphi,\mathfrak{w}}^n=\sigma_{\varphi^n,\mathfrak{w}\sigma_\varphi(\mathfrak{w})\cdots\sigma_{\varphi^{n-1}}(\mathfrak{w})}$,
i.e.
\begin{equation}\label{tazeh11}
\forall(x_\alpha)_{\alpha\in\Gamma}\in F^\Gamma,\:\:\sigma_{\varphi,\mathfrak{w}}^n((x_\alpha)_{\alpha\in\Gamma})=
(\mathfrak{w}_\alpha\mathfrak{w}_{\varphi(\alpha)}\cdots\mathfrak{w}_{\varphi^{n-1}(\alpha)}x_{\varphi^n(\alpha)})_{\alpha
\in\Gamma}\:.
\end{equation}
On the other hand, $\sigma_{\varphi,\mathfrak{w}}(z)={\mathfrak w}\sigma_\varphi(z)$ (for all $z\in F^\Gamma$).
\end{lemma}
\begin{proof}
For all $z=(z_\alpha)_{\alpha\in\Gamma}\in F^\Gamma$ we have:
\begin{eqnarray}
\sigma_{\psi,\mathfrak{u}}\circ \sigma_{\varphi,\mathfrak{w}}(z) & = &
    \sigma_{\psi,\mathfrak{u}}(\sigma_{\varphi,\mathfrak{w}}((z_\alpha)_{\alpha\in\Gamma}))=\sigma_{\psi,\mathfrak{u}}(
    (\mathfrak{w}_\alpha z_{\varphi(\alpha)})_{\alpha\in\Gamma}) \\
& = & (\mathfrak{u}_\alpha \mathfrak{w}_{\psi(\alpha)} z_{\varphi(\psi(\alpha))})_{\alpha\in\Gamma}\label{33}
\\
& = & 
    \sigma_{\varphi\circ\psi,\mathfrak{u}\sigma_\psi(\mathfrak{w})}((z_\alpha)_{\alpha\in\Gamma})
    =\sigma_{\varphi\circ\psi,\mathfrak{u}\sigma_\psi(\mathfrak{w})}(z)\:.
\end{eqnarray}
Regarding the equality in \ref{33}, note that if $c_\alpha=\mathfrak{w}_\alpha z_{\varphi(\alpha)}$ for each $\alpha\in\Gamma$, then
$c_{\psi(\alpha)}=\mathfrak{w}_{\psi(\alpha)} z_{\varphi(\psi(\alpha))}$ and
$\mathfrak{u}_\alpha c_{\psi(\alpha)}=\mathfrak{u}_\alpha\mathfrak{w}_{\psi(\alpha)} z_{\varphi(\psi(\alpha))}$
for each $\alpha\in\Gamma$.
\\
Therefore:
\[\sigma_{\psi,\mathfrak{u}}\circ \sigma_{\varphi,\mathfrak{w}}(z)=\mathfrak{u}\sigma_\psi(\mathfrak{w})\sigma_{\varphi\circ
\psi}(z)=\sigma_{\varphi\circ\psi,\mathfrak{u}\sigma_\psi(\mathfrak{w})}(z)\:.\]
Use induction on $n\geq1$ to obtain \ref{tazeh11}.
\end{proof}
In the following lemma, we characterize the quasi--periodic weighted generalized shifts in terms of ${\mathcal T}$.
\begin{lemma}\label{salam10}
The following statements are equivalent:
\begin{itemize}
\item[1.] $\sigma_{\varphi,\mathfrak{w}}:F^\Gamma\to F^\Gamma$ is quasi--periodic,
\item[2.] there exist  $1\leq n<m$ such that, for any $\alpha\in\Gamma$, we have $\varphi^n(\alpha)=\varphi^m(\alpha)$ or 
$\mathfrak{w}_\alpha
    \mathfrak{w}_{\varphi(\alpha)}\cdots\mathfrak{w}_{\varphi^{n-1}(\alpha)}=0$,
\item[3.] $\sup(\{n:\exists\alpha\in\Gamma\:\:(n,\alpha)\in{\mathcal T}\}\cup\{0\})<+\infty$.
\end{itemize}
\end{lemma}
\begin{proof}
(1 $\Rightarrow$ 2): For $m>n\geq1$, suppose
$\sigma^n_{\varphi,\mathfrak{w}}=\sigma^m_{\varphi,\mathfrak{w}}$ and consider $\beta\in\Gamma$
such that $\varphi^n(\beta)\neq\varphi^m(\beta)$. Choose $(x_\alpha)_{\alpha\in\Gamma}\in
F^\Gamma$ such that $x_{\varphi^n(\beta)}=1$ and
$x_{\varphi^m(\beta)}=0$, then
\begin{eqnarray*}
(\mathfrak{w}_\alpha\mathfrak{w}_{\varphi(\alpha)}\cdots\mathfrak{w}_{\varphi^{n-1}(\alpha)}x_{\varphi^n(\alpha)})_{\alpha\in\Gamma}
	& = & \sigma^n_{\varphi,\mathfrak{w}}((x_\alpha)_{\alpha\in\Gamma}) = \sigma^m_{\varphi,\mathfrak{w}}((x_\alpha)_{\alpha\in\Gamma}) \\
& = &
    ( \mathfrak{w}_\alpha\mathfrak{w}_{\varphi(\alpha)}\cdots\mathfrak{w}_{\varphi^{m-1}(\alpha)}x_{\varphi^m(\alpha)})_{\alpha\in\Gamma}
\end{eqnarray*}
in particular
\[\mathfrak{w}_\beta\mathfrak{w}_{\varphi(\beta)}\cdots\mathfrak{w}_{\varphi^{n-1}(\beta)}x_{\varphi^n(\beta)}=
\mathfrak{w}_\beta\mathfrak{w}_{\varphi(\beta)}\cdots\mathfrak{w}_{\varphi^{m-1}(\beta)}x_{\varphi^m(\beta)}\]
using $x_{\varphi^n(\beta)}=1$ and
$x_{\varphi^m(\beta)}=0$ we conclude that $\mathfrak{w}_\beta\mathfrak{w}_{\varphi(\beta)}\cdots\mathfrak{w}_{\varphi^{n-1}(\beta)}=0$.
\\
(2 $\Rightarrow$ 3): If (2) holds, then
$\sup(\{k:\exists\alpha\in\Gamma\:\:(k,\alpha)\in{\mathcal
T}\}\cup\{0\})\leq m-1<+\infty$.
\\
(3 $\Rightarrow$ 1): Suppose
$\sup(\{k:\exists\alpha\in\Gamma\:\:(k,\alpha)\in{\mathcal
T}\}\cup\{0\})$ is finite, let
\[p=\sup(\{k:\exists\alpha\in\Gamma\:\:(k,\alpha)\in{\mathcal
T}\}\cup\{0\})+1\:\:{\rm and}\:\: q=(p+1)!\:.\]
We prove that $\sigma_{\varphi,\mathfrak{w}}$ is quasi--periodic
via the following 2 claims.
\\
{\it Claim A.}
For $\beta\in\Gamma$ and for some $q\geq1$
if $\varphi^q(\beta)=\beta$, 
then 
\[\mathop{\prod}\limits_{0\leq i\leq \vert F\vert q-1}\mathfrak{w}_{\varphi^i(\beta)}=
\mathop{\prod}\limits_{0\leq i\leq q-1}\mathfrak{w}_{\varphi^i(\beta)} \:.\]
{\it Proof of Claim A.} 
Note  that $F\setminus\{0\}$ is a multiplicative group with
$|F|-1$ elements and identity $1$, thus $x^{|F|-1}=1$ for each $x\in F\setminus\{0\}$, hence $x^{|F|}=x$ for all $x\in F$, so
\[\begin{array}{rcl}
\mathop{\prod}\limits_{0\leq i\leq \vert F\vert q-1}\mathfrak{w}_{\varphi^i(\beta)} & = &
    \mathop{\prod}\limits_{0\leq j\leq \vert F\vert -1}\bigg(
    \mathop{\prod}\limits_{qj\leq i\leq q(j+1)-1}\mathfrak{w}_{\varphi^i(\beta)}\bigg)  \\ && \\
& = &   \mathop{\prod}\limits_{0\leq j\leq \vert F\vert -1}\bigg(
    \mathop{\prod}\limits_{0\leq i\leq q-1}\mathfrak{w}_{\varphi^{qj+i}(\beta)}\bigg)  \\ && \\
& = &
    \mathop{\prod}\limits_{0\leq j\leq \vert F\vert -1}\bigg(
    \mathop{\prod}\limits_{0\leq i\leq q-1}\mathfrak{w}_{\varphi^i(\beta)}\bigg) \\
    && \\
& = & \bigg(\mathop{\prod}\limits_{0\leq i\leq q-1}\mathfrak{w}_{\varphi^i(\beta)}\bigg )^{\vert F\vert} 
    = \mathop{\prod}\limits_{0\leq i\leq q-1}\mathfrak{w}_{\varphi^i(\beta)} 
\end{array}
\]
{\it Claim B.} Let $\alpha\in\Gamma$ and $x=(x_\theta)_{\theta\in\Gamma}\in F^\Gamma$, then:
\[\bigg(\mathop{\prod}\limits_{0\leq i\leq q+p}\mathfrak{w}_{\varphi^i(\alpha)}\bigg)x_{\varphi^{q+p+1}(\alpha)}=
\bigg(\mathop{\prod}\limits_{0\leq i\leq \vert F\vert q+p}\mathfrak{w}_{\varphi^i(\alpha)}\bigg)x_{\varphi^{\vert F \vert q+p+1}(\alpha)}\:.\]
{\it Proof of Claim B.}
Using the definition of $p$, for every $\alpha\in\Gamma$, $(p+1,\alpha)\notin\mathcal{T} $. By the definition of $\mathcal T$
and  $(p+1,\alpha)\notin\mathcal{T} $, we have:
\[\mathfrak{w}_\alpha\mathfrak{w}_{\varphi(\alpha)}\cdots\mathfrak{w}_{\varphi^{p+1}(\alpha)}=0 \vee
    \vert \{\alpha,\varphi(\alpha),\ldots,\varphi^{p+1}(\alpha)\}\vert <p+2\:.\]
If $\mathfrak{w}_\alpha\mathfrak{w}_{\varphi(\alpha)}\cdots\mathfrak{w}_{\varphi^{p+1}(\alpha)}=0$, then
\[0=\bigg(\mathop{\prod}\limits_{0\leq i\leq \vert F\vert q+p}\mathfrak{w}_{\varphi^i(\alpha)}\bigg)x_{\varphi^{\vert F \vert q+p+1}(\alpha)}=\bigg(\mathop{\prod}\limits_{0\leq i\leq q+p}\mathfrak{w}_{\varphi^i(\alpha)}\bigg)x_{\varphi^{q+p+1}(\alpha)}\:.\]
On the other hand, if $\vert \{\alpha,\varphi(\alpha),\ldots,\varphi^{p+1}(\alpha)\}\vert <p+2$, then there exists $0\leq i<j\leq p+1$ such that 
$\varphi^i(\alpha)=\varphi^j(\alpha)$, thus $\varphi^i(\alpha)\in{\rm Per}(\varphi)$ with ${\rm per}(\varphi^i(\alpha))\leq j-i\leq p+1$.
Since $\varphi({\rm Per}(\varphi))={\rm Per}(\varphi)$, so $\varphi^{p+1}(\alpha)=\varphi^{(p+1)-i}(\varphi^i(\alpha))\in {\rm Per}(\varphi)$.
Moreover, ${\rm per}(\varphi^{p+1}(\alpha))={\rm per}(\varphi^{(p+1)-i}(\varphi^i(\alpha)))={\rm per}(\varphi^i(\alpha))\leq p+1$,
therefore  ${\rm per}(\varphi^{p+1}(\alpha))$ divides $q=(p+1)!$ which shows $\varphi^{q+p+1}(\alpha)=\varphi^{p+1}(\alpha)$.
By Claim A we have
$\mathop{\prod}\limits_{0\leq i\leq \vert F\vert q-1}\mathfrak{w}_{\varphi^i(\varphi^{p+1}(\alpha))} 
= \mathop{\prod}\limits_{0\leq i\leq q-1}\mathfrak{w}_{\varphi^i(\varphi^{p+1}(\alpha))} $ thus:
\[\mathop{\prod}\limits_{p+1\leq i\leq \vert F\vert q+p}\mathfrak{w}_{\varphi^i(\alpha)}=
\mathop{\prod}\limits_{0\leq i\leq \vert F\vert q-1}\mathfrak{w}_{\varphi^{i+p+1}(\alpha)} 
= \mathop{\prod}\limits_{0\leq i\leq q-1}\mathfrak{w}_{\varphi^{i+p+1}(\alpha)} =\mathop{\prod}\limits_{p+1\leq i\leq q+p}\mathfrak{w}_{\varphi^i(\alpha)}\]
therefore $\mathop{\prod}\limits_{0\leq i\leq \vert F\vert q+p}\mathfrak{w}_{\varphi^i(\alpha)}=
    \mathop{\prod}\limits_{0\leq i\leq q+p}\mathfrak{w}_{\varphi^i(\alpha)}$. So
    (use $\varphi^{q+p+1}(\alpha)=\varphi^{\vert F \vert q+p+1}(\alpha)$):
    \[\bigg(\mathop{\prod}\limits_{0\leq i\leq q+p}\mathfrak{w}_{\varphi^i(\alpha)}\bigg)x_{\varphi^{q+p+1}(\alpha)}=
\bigg(\mathop{\prod}\limits_{0\leq i\leq \vert F\vert q+p}\mathfrak{w}_{\varphi^i(\alpha)}\bigg)x_{\varphi^{\vert F \vert q+p+1}(\alpha)}\:,\]
which completes the proof of Claim B.
\\
Now we are ready to complete the proof of Lemma~\ref{salam10}. Indeed, by Claim B we have
$\sigma_{\varphi,\mathfrak{w}}^{q+p+1}=\sigma_{\varphi,\mathfrak{w}}^{q \vert F\vert +p+1}$
which means that $\sigma_{\varphi,\mathfrak{w}}:F^\Gamma\to F^\Gamma$ is quasi--periodic.
\end{proof}
\noindent In order to have intuition on weighted generalized shifts with infinite set-theoretical entropy in the
following example we 
bring a one to one map $\eta_1:\mathbb{N}\to\mathbb{N}$ without any periodic point and a pointwise quasi--periodic
$\eta_2:\mathbb{N}\to\mathbb{N}$ such that for $\mathfrak{u}:=(1)_{n\in\mathbb{N}}$ both maps
$\sigma_{\eta_1,\mathfrak{u}},\sigma_{\eta_2,\mathfrak{u}}:F^\mathbb{N}\to F^\mathbb{N}$ has infinite
set-theoretical entropy.
\begin{exam}
Consider $\eta_1,\eta_2:\mathbb{N}\to\mathbb{N}$ with $\eta_1(n)=n+1$ (for $n\in\mathbb{N}$) and $\eta_2=\eta$ as in
Example~\ref{kheybar40}. Also let:
\[\begin{array}{lcl}
z_1& := & (1,\:\: 1,0,\;\: 1,0,0,\:\: 1,0,0,0,\:\: 1,0,0,0,0,\:\:1,0,0,0,0,0,\:\:\cdots)\:,\\
z_2& := & (1,\:\: 1,1,\;\: 1,1,0,\:\: 1,1,0,0,\:\: 1,1,0,0,0,\:\:1,1,0,0,0,0,\:\:\cdots)\:,\\
z_3& := & (1,\:\: 1,1,\;\: 1,1,1,\:\: 1,1,1,0,\:\: 1,1,1,0,0,\:\:1,1,1,0,0,0,\:\:\cdots)\:,\\
&\vdots & 
\end{array}\]
then for $\mathfrak{u}:=(1)_{n\in\mathbb{N}}$, the sequences 
$\{\sigma_{\eta_i,\mathfrak{u}}^n(z_1)\}_{n\geq1},\{\sigma_{\eta_i,\mathfrak{u}}^n(z_2)\}_{n\geq1},\{\sigma_{\eta_i,\mathfrak{u}}^n(z_3)\}_{n\geq1},\ldots$ are pairwise disjoint one to one sequences. 
\end{exam}
\begin{theorem}\label{ssalam10}
For the weighted generalized shift $\sigma_{\varphi,\mathfrak{w}}:F^\Gamma\to F^\Gamma$ we have:
\[\eset(\sigma_{\varphi,\mathfrak{w}})=\left\{\begin{array}{lc} 0, & {\rm if \:} \sigma_{\varphi,\mathfrak{w}} {\rm \: is \: quasi-periodic}, \\
    +\infty, & {\rm otherwise\:}. \end{array}\right.\]
\end{theorem}
\begin{proof}
We already know that the quasi--periodicity $\sigma_{\varphi,\mathfrak{w}}$  implies the pointwise quasi--periodicity of
$\sigma_{\varphi,\mathfrak{w}}$, which means
that $\mathsf{o}(\sigma_{\varphi,\mathfrak{w}})=\eset(\sigma_{\varphi,\mathfrak{w}})=0$. So it remains to prove that 
non--quasi--periodicity of $\sigma_{\varphi,\mathfrak{w}}$ implies
$\eset(\sigma_{\varphi,\mathfrak{w}})=+\infty$.
\\
Suppose $\sigma_{\varphi,\mathfrak{w}}:F^\Gamma\to F^\Gamma$ is not quasi--periodic, then by Lemma~\ref{salam10} we have
\linebreak
$\sup(\{k:\exists\alpha\in\Gamma\:\:(k,\alpha)\in{\mathcal T}\}\cup\{0\})=+\infty$. We aim to prove that
$\eset(\sigma_{\varphi,\mathfrak{w}})=+\infty$.
Using induction choose
a sequence $\{\alpha_i\}_{i\geq1}$ ($\subseteq\Gamma$) in the following way:
\begin{itemize}
\item there exists $\alpha_1$ such that  $(n_1,\alpha_1)=(1,\alpha_1)\in\mathcal{T}$,
\item for $k\geq1$, suppose $(1,\alpha_1),\ldots,(k,\alpha_k)\in\mathcal{T}$ have been chosen
such that $\{\alpha_1,\varphi(\alpha_1)\},\ldots, \{\alpha_k,\varphi(\alpha_k),\ldots,\varphi^k(\alpha_k)\}$ are pairwise disjoint.
There exists
    $(n_{k+1},\beta)\in\mathcal{T}$ with $n_{k+1}\geq (k+1)(k+2)+\underbrace{\frac{(k+1)(k+2)}{2}-1}_{2+3+\cdots+(k+1)}$.
Suppose
\[H:=\{\beta,\varphi(\beta),\ldots,\varphi^{n_{k+1}}(\beta)\}\setminus(\cup\{\{\alpha_i,\varphi(\alpha_i),\ldots,
    \varphi^i(\alpha_i)\}:1\leq i\leq k\})\]
is equal to:
$\{\varphi^{s_1}(\beta),\varphi^{s_1+1}(\beta),\ldots,\varphi^{s_1+t_1}(\beta)\}\cup \{\varphi^{s_2}(\beta),\ldots,\varphi^{s_2+t_2}(\beta)\}\cup\cdots
\cup \{\varphi^{s_p}(\beta),\ldots,\varphi^{s_p+t_p}(\beta)\}$
with
\begin{center}
$0\leq s_1\leq s_1+t_1<s_2-1<s_2+t_2<\cdots<s_p-1<s_p+t_p\leq n_{k+1}$,
\end{center}
then $p\leq k+1$ and
\begin{eqnarray*}
\vert H\vert & = & (t_1+1)+(t_2+1)+\cdots+(t_p+1) \\
    & \geq & n_{k+1}+1-\vert \cup\{\{\alpha_i,\varphi(\alpha_i),\ldots, \varphi^i(\alpha_i)\}:1\leq i\leq k\}\vert \\
    & \geq & n_{k+1}+1-(2+\cdots+(k+1))\geq (k+1)(k+2)
\end{eqnarray*}
since $\vert H\vert=(t_1+1)+(t_2+1)+\cdots+(t_p+1)\geq (k+1)(k+2) $ and $p\leq k+1$
there exists $j\in\{1,\ldots, p\}$ with $t_j+1\geq k+2$, thus:
{\small
\begin{equation}\label{tazeh}
\begin{array}{c}
\\
\{\varphi^{s_j}(\beta),\varphi^{s_j+1}(\beta),\ldots,\varphi^{s_j+(k+1)}(\beta)\}\cap\bigg(\cup\{\{\alpha_i,\varphi(\alpha_i),\ldots,
    \varphi^i(\alpha_i)\}:1\leq i\leq k\}\bigg) \\
\subseteq\{\varphi^{s_j}(\beta),\ldots,\varphi^{s_j+t_j}(\beta)\}\cap\bigg(\cup\{\{\alpha_i,\varphi(\alpha_i),\ldots,
    \varphi^i(\alpha_i)\}:1\leq i\leq k\}\bigg)=\varnothing \\
  \end{array}
    \end{equation}}
 let $\alpha_{k+1}:=\varphi^{s_j}(\beta)$, then $(k+1,\alpha_{k+1})\in\mathcal{T}$ and by
\ref{tazeh}
\[\{\alpha_1,\varphi(\alpha_1)\},\ldots, \{\alpha_k,\varphi(\alpha_k),\ldots,\varphi^k(\alpha_k)\},
\{\alpha_{k+1},\varphi(\alpha_{k+1}),\ldots,\varphi^{k+1}(\alpha_{k+1})\}\]
are pairwise disjoint sets.
\end{itemize}
Using the above inductive construction
$\{\{\varphi^i(\alpha_n):0\leq i\leq n\}:n\geq1\}$ is a
collection of pairwise disjoint sets and for all $n\geq1$,
$i\in\{0,\ldots,n\}$, we have $(n,\alpha_n)\in\mathcal{T}$ in particular
$\mathfrak{w}_{\varphi^i(\alpha_n)}\neq0$.
\\
For $m\geq1$, suppose $p_m$ is the $m$th prime number and let:
\[x^m_\alpha:=\left\{\begin{array}{lc} 1, & {\rm if \:} \alpha=\varphi^{p_m^n}(\alpha_{p_m^n}) {\rm \: for \: some \:} n\geq1, \\ 0, & {\rm otherwise\:,} \end{array}\right.\SP
{\rm and}\SP x^m:=(x^m_\alpha)_{\alpha\in\Gamma}.\] For
convenience let
\[\sigma_{\varphi,\mathfrak{w}}^i(x^j)=(y^{j,i}_\alpha)_{\alpha\in\Gamma}\SP(i,j\geq1)\:.\]
We claim that
$\{\{\sigma_{\varphi,\mathfrak{w}}^n(x^m)\}_{n\geq1}:m\geq1\}$ is
a collection of pairwise disjoint one to one sequences. For this
aim, consider $(m,n),(s,t)\in{\mathbb N}\times{\mathbb N}$ with
$(m,n)\neq(s,t)$, we show:
\[\sigma_{\varphi,\mathfrak{w}}^n(x^m)\neq \sigma_{\varphi,\mathfrak{w}}^t(x^s)\]
using the following cases:
\\
{\bf Case 1.} $m\neq s$. Without any loss of generality we may suppose $n\geq t$. Choose $k\geq1$ with
    $p_m^k>n$, then
    \begin{eqnarray*}
    y_{\varphi^{p_m^k-n}(\alpha_{p_m^k})}^{m,n} & = &
    \mathfrak{w}_{\varphi^{p_m^k-n}(\alpha_{p_m^k})}
    \mathfrak{w}_{\varphi^{p_m^k-n+1}(\alpha_{p_m^k})}\cdots\mathfrak{w}_{\varphi^{p_m^k-1}(\alpha_{p_m^k})}
    \underbrace{x^m_{\varphi^{p_m^k}(\alpha_{p_m^k})}}_{1} \\
    & = & \mathfrak{w}_{\varphi^{p_m^k-n}(\alpha_{p_m^k})}\cdots\mathfrak{w}_{\varphi^{p_m^k-1}(\alpha_{p_m^k})}
    \neq0\:.
    \end{eqnarray*}
Also (use the way of choosing $\alpha_i$s)
\begin{eqnarray*}
p_m^k>n\geq t & \Rightarrow & p_m^k+t-n\in\{1,\ldots,p_m^k\} \\
& \Rightarrow & \forall r\geq1\SP\varphi^{p_s^r}(\alpha_{p_s^r})\neq \varphi^{p_m^k+t-n}(\alpha_{p_m^k}) \\
& \Rightarrow & x^s_{\varphi^{p_m^k+t-n}(\alpha_{p_m^k})}=0\:.
\end{eqnarray*}
And
    \[y_{\varphi^{p_m^k-n}(\alpha_{p_m^k})}^{s,t} =
    \mathfrak{w}_{\varphi^{p_m^k-n}(\alpha_{p_m^k})}\cdots\mathfrak{w}_{\varphi^{p_m^k-n+t-1}(\alpha_{p_m^k})}
    \underbrace{x^s_{\varphi^{p_m^k+t-n}(\alpha_{p_m^k})}}_{0} =0\:.\]
    So $y_{\varphi^{p_m^k-n}(\alpha_{p_m^k})}^{m,n}\neq y_{\varphi^{p_m^k-n}(\alpha_{p_m^k})}^{s,t}$
    and $\sigma_{\varphi,\mathfrak{w}}^n(x^m)\neq \sigma_{\varphi,\mathfrak{w}}^t(x^s)$.
\\
{\bf Case 2.} $m=s$ and $n\neq t$. We may suppose $n>t$. Then:
\begin{eqnarray*}
    y_{\varphi^{p_m^n-n}(\alpha_{p_m^n})}^{m,n} & = &
    \mathfrak{w}_{\varphi^{p_m^n-n}(\alpha_{p_m^n})}
    \mathfrak{w}_{\varphi^{p_m^n-n+1}(\alpha_{p_m^n})}\cdots\mathfrak{w}_{\varphi^{p_m^n-1}(\alpha_{p_m^n})}
    \underbrace{x^m_{\varphi^{p_m^n}(\alpha_{p_m^n})}}_{1} \\
    & = & \mathfrak{w}_{\varphi^{p_m^n-n}(\alpha_{p_m^n})}\cdots\mathfrak{w}_{\varphi^{p_m^n-1}(\alpha_{p_m^n})}
    \neq0
    \end{eqnarray*}
and
    \begin{eqnarray*}
    y_{\varphi^{p_m^n-n}(\alpha_{p_m^n})}^{s,t}&=&y_{\varphi^{p_m^n-n}(\alpha_{p_m^n})}^{m,t}
    \\ & = &
    \mathfrak{w}_{\varphi^{p_m^n-n}(\alpha_{p_m^n})}\cdots\mathfrak{w}_{\varphi^{p_m^n-n+t-1}(\alpha_{p_m^n})}
    \underbrace{x^m_{\varphi^{p_m^n+t-n}(\alpha_{p_m^n})}}_{0} = 0
    \end{eqnarray*}
    so $y_{\varphi^{p_m^n-n}(\alpha_{p_m^n})}^{m,n}\neq y_{\varphi^{p_m^n-n}(\alpha_{p_m^n})}^{s,t}$
    and $\sigma_{\varphi,\mathfrak{w}}^n(x^m)\neq \sigma_{\varphi,\mathfrak{w}}^t(x^s)$.
\\
Using the above cases
$\{\{\sigma_{\varphi,\mathfrak{w}}^n(x^m)\}_{n\geq1}:m\geq1\}$ is
a collection of  pairwise disjoint one to one sequences, thus
$\eset(\sigma_{\varphi,\mathfrak{w}})=\mathsf{o}(\sigma_{\varphi,\mathfrak{w}})=+\infty$.
\end{proof}
\begin{corollary}\label{maryam60}
$\sigma_{\varphi,\mathfrak{w}}:F^\Gamma\to F^\Gamma$ is quasi--periodic if and only if it is pointwise quasi--periodic.
\end{corollary}
\begin{proof}
We have $\mathsf{o}(\sigma_{\varphi,\mathfrak{w}})=(\eset(\sigma_{\varphi,\mathfrak{w}})=)0$ if and only if 
$\sigma_{\varphi,\mathfrak{w}}$ is pointwise quasi--periodic, now use Theorem~\ref{ssalam10}.
\end{proof}
\section{Contravariant set--theoretical entropy of \\ finite fibre $\sigma_{\varphi,\mathfrak{w}}:F^\Gamma\to F^\Gamma$}\label{maryam70}
\noindent Since $\sigma_{\varphi,\mathfrak{w}}:F^\Gamma\to
F^\Gamma$ is an endomorphism of the abelian additive group
$(F^\Gamma,+)$, by~\cite[Theorem A]{string} we have
$\mathsf{a}(\sigma_{\varphi,\mathfrak{w}})\in\{0,+\infty\}$. In
this section we characterize and show that
$\ecset(\sigma_{\varphi,\mathfrak{w}})(=\mathsf{a}(\sigma_{\varphi,\mathfrak{w}}))$
for a finite fibre $\sigma_{\varphi,\mathfrak{w}}:F^\Gamma\to
F^\Gamma$ depends only on $\varphi$ and
$\rm{supp}(\mathfrak{w})  =\{\alpha\in\Gamma:\mathfrak{w}_\alpha\neq0\}$. In this section let:
\[\Upsilon  := \bigcup\{\varphi^{-n}(\Gamma\setminus\rm{supp}(\mathfrak{w})):n\geq0\}\:,  \:
\Lambda := \bigcap\{\varphi^{-n}({\rm supp}({\mathfrak w})):n\geq0\}= \Gamma\setminus\Upsilon\:.\]
Note that  $\varphi(\Lambda)\subseteq\Lambda$.
\begin{exam}
If $\mathfrak{w}=(1)_{\alpha\in\Gamma}$, then $\sigma_{\varphi,\mathfrak{w}}=\sigma_\varphi:F^\Gamma\to F^\Gamma$ is 
just a generalized shift moreover for this case $\rm{supp}(\mathfrak{w}) =\Lambda=\Gamma$, and $\Upsilon=\varnothing$.
\end{exam}
\begin{lemma}\label{badr10}
${\rm sc}(\sigma_{\varphi,\mathfrak{w}})\subseteq\left\{(x_\alpha)_{\alpha\in\Gamma}:\forall\beta\in\Upsilon,
\:\:x_\beta=0\right\}$.
\end{lemma}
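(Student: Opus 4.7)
The plan is to unpack the two definitions involved and apply the explicit iteration formula from the Note preceding Lemma~\ref{salam10}. Recall that for every $n\geq 1$, the formula gives
\[
\sigma_{\varphi,\mathfrak{w}}^n(x)_\beta=\mathfrak{w}_\beta\mathfrak{w}_{\varphi(\beta)}\cdots\mathfrak{w}_{\varphi^{n-1}(\beta)}\,x_{\varphi^n(\beta)}
\qquad (x=(x_\alpha)_{\alpha\in\Gamma}\in F^\Gamma).
\]
So if a zero weight appears anywhere in the product, the $\beta$-coordinate of $\sigma_{\varphi,\mathfrak{w}}^n(x)$ vanishes regardless of $x$.

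Fix $y=(y_\alpha)_{\alpha\in\Gamma}\in\mathfrak{D}$ and an arbitrary $\beta\in\Upsilon$. By definition of $\Upsilon$, there exists $n\geq 0$ with $\beta\in\varphi^{-n}(\mathfrak{Z})$, i.e.\ $\varphi^n(\beta)\in\mathfrak{Z}$, which means $\mathfrak{w}_{\varphi^n(\beta)}=0$. Since $y\in\mathfrak{D}\subseteq\sigma_{\varphi,\mathfrak{w}}^{n+1}(F^\Gamma)$, choose some $x=(x_\alpha)_{\alpha\in\Gamma}\in F^\Gamma$ with $\sigma_{\varphi,\mathfrak{w}}^{n+1}(x)=y$. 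Applying the boxed formula at level $n+1$,
\[
y_\beta=\mathfrak{w}_\beta\mathfrak{w}_{\varphi(\beta)}\cdots\mathfrak{w}_{\varphi^{n-1}(\beta)}\mathfrak{w}_{\varphi^n(\beta)}\,x_{\varphi^{n+1}(\beta)},
\]
and the factor $\mathfrak{w}_{\varphi^n(\beta)}=0$ forces $y_\beta=0$. Since $\beta\in\Upsilon$ was arbitrary, $y$ lies in the set on the right, proving the claimed inclusion.

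There is no real obstacle here: the argument is a one-line consequence of the iteration formula combined with the fact that $\Upsilon$ is precisely the set of indices $\beta$ for which the $\beta$-indexed product of weights $\mathfrak{w}_\beta\mathfrak{w}_{\varphi(\beta)}\cdots$ eventually picks up a zero factor. The only thing one should be careful about is the case $n=0$, where $\beta\in\mathfrak{Z}$ itself; then $y=\sigma_{\varphi,\mathfrak{w}}(x)$ for some $x$ gives $y_\beta=\mathfrak{w}_\beta x_{\varphi(\beta)}=0$, which still fits the scheme.
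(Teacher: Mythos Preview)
Your proof is correct and follows exactly the same approach as the paper's own proof: pick $n\geq 0$ with $\mathfrak{w}_{\varphi^n(\beta)}=0$, use membership in $\sigma_{\varphi,\mathfrak{w}}^{n+1}(F^\Gamma)$ to write the element as an $(n+1)$-fold iterate, and apply the iteration formula from the Note so that the vanishing factor kills the $\beta$-coordinate. The only cosmetic difference is that you swap the roles of the letters $x$ and $y$ relative to the paper and add an explicit remark on the $n=0$ case.
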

\begin{proof}
Consider $(x_\alpha)_{\alpha\in\Gamma}\in {\rm sc}(\sigma_{\varphi,\mathfrak{w}})$ and
$\beta\in\Upsilon$, so there exists $n\geq0$ with
$\mathfrak{w}_{\varphi^n(\beta)}=0$. Since
$(x_\alpha)_{\alpha\in\Gamma}\in {\rm sc}(\sigma_{\varphi,\mathfrak{w}})
\subseteq\sigma_{\varphi,\mathfrak{w}}^{n+1}(F^\Gamma)$ there
exists $(y_\alpha)_{\alpha\in\Gamma}$ with
$\sigma_{\varphi,\mathfrak{w}}^{n+1}((y_\alpha)_{\alpha\in\Gamma})=(x_\alpha)_{\alpha\in\Gamma}$.
In particular
\[x_\beta=\mathfrak{w}_{\beta}\mathfrak{w}_{\varphi(\beta)}\cdots\underbrace{\mathfrak{w}_{\varphi^n(\beta)}}_{0}
y_{\varphi^{n+1}(\beta)}=0\]
which completes the proof.
\end{proof}
$\:$ \\
Let's call subset $M$ of $\Gamma$, \textit{$\varphi-$invariant} if $\varphi(M)\subseteq M$.
\begin{lemma}\label{badr20}
If $M$ is a $\varphi-$invariant subset  of $\Gamma$, then
$(\sigma_{\varphi,\mathfrak{w}}(x))^M=\sigma_{\varphi\restriction_M,\mathfrak{w}^M}(x^M)$
for all $x\in F^\Gamma$.
\end{lemma}
\begin{proof}
For $x=(x_\alpha)_{\alpha\in\Gamma}\in F^\Gamma$, we have:
\begin{eqnarray*}
(\sigma_{\varphi,\mathfrak{w}}(x))^M & = &
    ((\mathfrak{w}_\alpha x_{\varphi(\alpha)})_{\alpha\in\Gamma})^M
    =(\mathfrak{w}_\alpha x_{\varphi(\alpha)})_{\alpha\in M}\\
& = & (\mathfrak{w}_\alpha x_{\varphi\restriction_M(\alpha)})_{\alpha\in M}=
    \sigma_{\varphi\restriction_M,\mathfrak{w}^M}(x^M)\:.
\end{eqnarray*}
\end{proof}
\noindent Note that for $f:A\to A$, if $\{x_n\}_{n\geq1}$ is an
$f-$anti-orbit sequence, then
$\{x_n:n\geq1\}\subseteq\bigcap\{f^n(A):n\geq1\}={\rm sc}(f)$.
\begin{corollary}\label{badr25}
$\mathsf{a}(\sigma_{\varphi,\mathfrak{w}})=\mathsf{a}(\sigma_{\varphi\restriction_\Lambda,\mathfrak{w}^\Lambda})$.
\end{corollary}
\begin{proof}
If $\Lambda=\varnothing$, then by Lemma~\ref{badr10}
we have  ${\rm sc}(\sigma_{\varphi,\mathfrak{w}})=\{(0)_{\alpha\in\Gamma}\}$. So
$\mathsf{a}(\sigma_{\varphi,\mathfrak{w}})=\mathsf{a}(\sigma_{\varphi,\mathfrak{w}}\restriction_{{\rm sc}(\sigma_{\varphi,\mathfrak{w}})})
=0=
\mathsf{a}(\sigma_{\varphi\restriction_\Lambda,\mathfrak{w}^\Lambda})$.
\\
Now suppose $\Lambda\neq\varnothing$. For $m\geq1$ if
$\{x_{1,n}\}_{n\geq1},\ldots,\{x_{m,n}\}_{n\geq1}$ are $m$ one to
one $\sigma_{\varphi,\mathfrak{w}}-$anti-orbit disjoint
sequences, by Lemma~\ref{badr10} the map $\mathop{{\rm sc}(\sigma_{\varphi,\mathfrak{w}})\to
F^\Lambda}\limits_{\SP \SP \SP x\mapsto x^\Lambda}$ is one to one, then
$\{x_{1,n}^\Lambda\}_{n\geq1},\ldots,\{x_{m,n}^\Lambda\}_{n\geq1}$
are $m$ one to one disjoint sequences. On the other hand, by
Lemma~\ref{badr20},
$\{x_{1,n}^\Lambda\}_{n\geq1},\ldots,\{x_{m,n}^\Lambda\}_{n\geq1}$
are
$\sigma_{\varphi\restriction_\Lambda,\mathfrak{w}^\Lambda}-$anti-orbit
sequences. Hence
$\mathsf{a}(\sigma_{\varphi,\mathfrak{w}})\leq\mathsf{a}(\sigma_{\varphi\restriction_\Lambda,\mathfrak{w}^\Lambda})$.
\\
For each $x=(x_\alpha)_{\alpha\in\Lambda}\in F^\Lambda$, define
$\overline{x}=(\overline{x}_\alpha)_{\alpha\in\Gamma}$ with
$\overline{x}_\alpha:=0$ for all
$\alpha\in\Gamma\setminus\Lambda=\Upsilon$ and
$\overline{x}_\alpha:=x_\alpha$ for all $\alpha\in\Lambda$, so if
$\{z_{1,n}\}_{n\geq1},\ldots,\{z_{m,n}\}_{n\geq1}$ are $m$ one to
one
$\sigma_{\varphi\restriction_\Lambda,\mathfrak{w}^\Lambda}-$anti-orbit
disjoint sequences, then
$\{\overline{z}_{1,n}\}_{n\geq1},\ldots,\{\overline{z}_{m,n}\}_{n\geq1}$
are $m$ one to one disjoint sequences too. Consider
$x=(x_\alpha)_{\alpha\in\Lambda},y=(y_\alpha)_{\alpha\in\Lambda}\in
F^\Lambda$. By \ref{Join}, for each
$\alpha\in\Upsilon$ we have:
\begin{equation}\label{JoinJoin}
\begin{array}{rcl}
\alpha\in\Upsilon & \Rightarrow & \mathfrak{w}_\alpha=0\vee \varphi(\alpha)\in\Upsilon \\
& \Rightarrow & \mathfrak{w}_\alpha=0\vee \overline{x}_{\varphi(\alpha)}=0 \\
& \Rightarrow & \mathfrak{w}_\alpha  \overline{x}_{\varphi(\alpha)}=0
\end{array}
\end{equation}
therefore:
\begin{eqnarray*}
\sigma_{\varphi\restriction_\Lambda,\mathfrak{w}^\Lambda}(x)=y
    & \Rightarrow & \forall\alpha\in\Lambda\:\:\mathfrak{w}_\alpha x_{\varphi(\alpha)}=y_\alpha \\
    & \mathop{\Rightarrow}\limits^{(\varphi(\Lambda)\subseteq\Lambda)} & \forall\alpha\in\Lambda\:\:\mathfrak{w}_\alpha \overline{x}_{\varphi(\alpha)}=\overline{y}_\alpha \\
    & \mathop{\Rightarrow}\limits^{\ref{JoinJoin}} & (\forall\alpha\in\Lambda\:\:\mathfrak{w}_\alpha \overline{x}_{\varphi(\alpha)}=\overline{y}_\alpha)\wedge(\forall\alpha\in \Upsilon\:\:\mathfrak{w}_\alpha \overline{x}_{\varphi(\alpha)}=0=\overline{y}_\alpha) \\
    & \Rightarrow & \forall\alpha\in\Gamma\:\:\mathfrak{w}_\alpha \overline{x}_{\varphi(\alpha)}=\overline{y}_\alpha \\
    & \Rightarrow & \sigma_{\varphi,\mathfrak{w}}(\overline{x})=\overline{y}\:,
\end{eqnarray*}
Thus $\sigma_{\varphi,\mathfrak{w}}(\overline{x})=\overline{\sigma_{\varphi\restriction_\Lambda,\mathfrak{w}^\Lambda}(x)}$
for all $x\in F^\Lambda$, which shows
$\{\overline{z}_{1,n}\}_{n\geq1},\ldots,\{\overline{z}_{m,n}\}_{n\geq1}$
are $\sigma_{\varphi,\mathfrak{w}}-$anti-orbit sequences too.
Hence $\mathsf{a}(\sigma_{\varphi,\mathfrak{w}})\geq\mathsf{a}(\sigma_{\varphi\restriction_\Lambda,\mathfrak{w}^\Lambda})$.
\end{proof}
\begin{lemma}\label{badr28}
If $\sup(\{n:\exists\alpha\in\Gamma,\:\:(n,\alpha)\in{\mathcal
T}\}\cup\{0\})<+\infty$, then
$\mathsf{a}(\sigma_{\varphi,\mathfrak{w}})=0$. 
\end{lemma}
\begin{proof}
Suppose
$\sup(\{n:\exists\alpha\in\Gamma,\:\:(n,\alpha)\in{\mathcal
T}\}\cup\{0\})<+\infty$, then by Lemma~\ref{salam10}, there exist
$m>n\geq1$ with
$\sigma_{\varphi,\mathfrak{w}}^n=\sigma_{\varphi,\mathfrak{w}}^m$.
If $\{x_k\}_{k\geq1}$ is a
$\sigma_{\varphi,\mathfrak{w}}-$anti-orbit sequence, then
$x_m=\sigma_{\varphi,\mathfrak{w}}^n(x_{n+m})=\sigma_{\varphi,\mathfrak{w}}^m(x_{n+m})=x_n$
and $\{x_k\}_{k\geq1}$ is not one to one, thus
$\mathsf{a}(\sigma_{\varphi,\mathfrak{w}})=0$.
\end{proof}
\subsection{An equivalence relation}
For $\alpha,\beta\in\Gamma$, let $\alpha\Re\beta$ if there exists $n\geq1$ with $\varphi^n(\alpha)=\varphi^n(\beta)$.
Then $\Re$ is an equivalence relation on $\Gamma$. Note that if $\alpha\Re\beta$, then  there exists $n\geq1$ with $\varphi^n(\alpha)=\varphi^n(\beta)$ hence $\varphi^n(\varphi(\alpha))=\varphi^n(\varphi(\beta))$, therefore
$\varphi(\alpha)\Re\varphi(\beta)$. 
To the above discussion 
$\tilde{\varphi}:\mathop{\frac{\Gamma}{\Re}\to\frac{\Gamma}{\Re}}\limits_{\frac{\alpha}{\Re}\mapsto\frac{\varphi(\alpha)}{\Re}}$ 
is well--defined.
By \cite[Lemma 3.5]{nili}, $\mathsf{a}(\sigma_\varphi)=\mathsf{a}(\sigma_{\tilde{\varphi}})$, so $\Re$ and $\tilde{\varphi}$
are useful to computing contravariant set--theoretical entropy of finite fibre $\sigma_\varphi$.
\\
Let's bring some properties of $\tilde{\varphi}$ and $\Re$.
\begin{rem}
We have:
\begin{itemize}
\item[1.] $\tilde{\varphi}:\frac{\Gamma}{\Re}\to\frac{\Gamma}{\Re}$ is one to one,
\item[2.] $\mathsf{f}:{\rm sc}(\sigma_\varphi)\to F^{\frac\Gamma\Re}$ with $\mathsf{f}((x_\alpha)_{\alpha\in\Gamma})=(x_\alpha)_{\frac\alpha\Re
\in\frac\Gamma\Re}$ is (well-defined and) one to one~\cite[Note 3.2]{nili},
\item[3.] if $\sigma_\varphi:F^\Gamma\to F^\Gamma$ is of finite fibre, then 
	$\sigma_{\tilde{\varphi}}:F^{\frac{\Gamma}{\Re}}\to F^{\frac{\Gamma}{\Re}}$ is of finite fibre too~\cite[Lemma 3.4]{nili}, 
\item[4.] $\mathsf{a}(\sigma_\varphi)=\mathsf{a}(\sigma_{\tilde{\varphi}})$~\cite[Lemma 3.5]{nili}.
\end{itemize}
\end{rem}
\begin{lemma}\label{badr2000}
We have:
\begin{itemize}
\item $\{\frac{\alpha}{\Re}:\alpha\in{\rm Per}(\varphi)\}={\rm
Per}(\tilde{\varphi})$,
\item ${\rm per}(\frac{\alpha}{\Re})={\rm
per}(\alpha)$ for each $\alpha\in {\rm Per}(\varphi)$.
\end{itemize}
\end{lemma}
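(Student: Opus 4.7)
The plan is to split the set equality into two inclusions, with the period identity falling out alongside.

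The inclusion $\{\frac{\alpha}{\Re}:\alpha\in{\rm Per}(\varphi)\}\subseteq{\rm Per}(\tilde\varphi)$ is immediate: if $\varphi^p(\alpha)=\alpha$, then by the very definition of $\tilde\varphi$ one has $\tilde\varphi^p(\frac{\alpha}{\Re})=\frac{\varphi^p(\alpha)}{\Re}=\frac{\alpha}{\Re}$. This simultaneously supplies one half of the period statement, namely ${\rm per}(\frac{\alpha}{\Re})\mid{\rm per}(\alpha)$.

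For the reverse inclusion, I would start from $\tilde\varphi^q(\frac{\alpha}{\Re})=\frac{\alpha}{\Re}$ and unpack $\Re$ to obtain $n\geq 1$ with $\varphi^{n+q}(\alpha)=\varphi^n(\alpha)$. Applying $\varphi^{k-n}$ to both sides propagates this to $\varphi^{k+q}(\alpha)=\varphi^k(\alpha)$ for every $k\geq n$. Setting $\beta:=\varphi^{nq}(\alpha)$, which is $\Re$-related to $\alpha$ via $\varphi^n(\beta)=\varphi^{n+nq}(\alpha)=\varphi^n(\alpha)$, genuine periodicity of $\beta$ follows from $\varphi^q(\beta)=\varphi^{nq+q}(\alpha)=\varphi^{nq}(\alpha)=\beta$, so $\frac{\alpha}{\Re}=\frac{\beta}{\Re}$ lies in the left-hand set.

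The remaining direction of the period identity is a short computation: writing $p={\rm per}(\alpha)$ and $r={\rm per}(\frac{\alpha}{\Re})$, I extract $n$ as above so that $\varphi^{\ell+r}(\alpha)=\varphi^\ell(\alpha)$ holds for all $\ell\geq n$, choose $m$ with $mp\geq n$, and compute $\varphi^r(\alpha)=\varphi^{mp+r}(\alpha)=\varphi^{mp}(\alpha)=\alpha$. This gives $p\mid r$, which combined with the first paragraph yields $r=p$.

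The only delicate point is the reverse inclusion: the hypothesis $\tilde\varphi^q(\frac{\alpha}{\Re})=\frac{\alpha}{\Re}$ does not make $\alpha$ itself periodic, only eventually periodic under $\varphi$. One must therefore pass to a sufficiently iterated representative $\beta=\varphi^{nq}(\alpha)$ within the same $\Re$-class before genuine periodicity can be extracted; everywhere else the argument is a direct unwinding of the definitions.
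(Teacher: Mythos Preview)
Your proof is correct and follows essentially the same strategy as the paper's: both establish the forward inclusion and the divisibility ${\rm per}(\frac{\alpha}{\Re})\mid{\rm per}(\alpha)$ directly from the definition of $\tilde\varphi$, then for the reverse inclusion pass from $\tilde\varphi^q(\frac{\alpha}{\Re})=\frac{\alpha}{\Re}$ to an eventual periodicity $\varphi^{n+q}(\alpha)=\varphi^n(\alpha)$ and exhibit a sufficiently iterated representative (your $\varphi^{nq}(\alpha)$, the paper's $\varphi^{tl}(\beta)$) that is genuinely $\varphi$-periodic in the same $\Re$-class. Your handling of the remaining divisibility $p\mid r$ via $\varphi^r(\alpha)=\varphi^{mp+r}(\alpha)=\varphi^{mp}(\alpha)=\alpha$ is the same idea as the paper's computation $\theta=\varphi^{nk}(\theta)=\varphi^{nk+m}(\theta)=\varphi^m(\theta)$.
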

\begin{proof}
Consider $\theta\in{\rm Per}(\varphi)$ with $n={\rm per}(\theta)$, then
$\tilde{\varphi}^n(\frac\theta\Re)=\frac{\varphi^n(\theta)}{\Re}=\frac\theta\Re$. So
$\frac\theta\Re\in{\rm Per}(\tilde{\varphi})$ with $m:={\rm per}(\frac\theta\Re)\leq{\rm per}(\theta)$. Also
 $\frac\theta\Re=\tilde{\varphi}^m(\frac\theta\Re)=\frac{\varphi^m(\theta)}{\Re}$
and there exists $k\geq1$ with
$\varphi^k(\theta)=\varphi^{k+m}(\theta)$, thus
$\theta=\varphi^{nk}(\theta)=\varphi^{nk-k+k}(\theta)=\varphi^{nk-k+k+m}(\theta)=\varphi^{nk+m}(\theta)=\varphi^{m}(\theta)$
and ${\rm per}(\frac\theta\Re)=m\geq {\rm per}(\theta)$, which
leads to ${\rm per}(\frac\theta\Re)={\rm per}(\theta)$. Thus:
\[\{\frac{\alpha}{\Re}:\alpha\in{\rm Per}(\varphi)\}\subseteq{\rm Per}(\tilde{\varphi})\]
and
\[{\rm per}(\frac{\alpha}{\Re})={\rm per}(\alpha)\:\:(\forall\alpha\in {\rm Per}(\varphi))\:.\]
Now let $\beta\in\Gamma$ with $\frac\beta\Re\in{\rm
Per}(\tilde{\varphi})$, there exists $t\geq1$ with
$\frac\beta\Re=\tilde{\varphi}^t(\frac\beta\Re)$, thus
$\frac{\varphi^t(\beta)}{\Re}=\frac\beta\Re$ and
$\varphi^t(\beta)\Re\beta$, thus there exists $l\geq1$ with
$\varphi^{l+t}(\beta)=\varphi^l(\beta)$, hence
$\varphi^{t(l+1)}(\beta)=\varphi^{tl-l+l+t}(\beta)=\varphi^{tl-l+l}(\beta)=\varphi^{tl}(\beta)$,
therefore $\varphi^t(\beta)\in{\rm Per}(\varphi)$ and
$\frac{\varphi^t(\beta)}{\Re}=\frac\beta\Re$ which shows
$\frac\beta\Re\in\{\frac{\alpha}{\Re}:\alpha\in{\rm
Per}(\varphi)\}$ and completes the proof.
\end{proof}
\begin{lemma}\label{badr30}
The following statements are equivalent:
\begin{itemize}
\item[1.] $\sigma_{\varphi,\mathfrak{w}}:F^\Gamma\to F^\Gamma$ is of finite fibre,
\item[2.] $\Gamma\setminus\varphi(\rm{supp}(\mathfrak{w}))$ is finite,
\item[3.] There exists $N\geq1$, such that for each $x\in F^\Gamma$, we have $\vert\sigma_{\varphi,\mathfrak{w}}^{-1}(x)\vert\leq N$.
\end{itemize}
In particular, if for each $\alpha\in\Gamma$, $\mathfrak{w}_\alpha\neq 0$, then the following statements are equivalent:
\begin{itemize}
\item $\sigma_{\varphi,\mathfrak{w}}:F^\Gamma\to F^\Gamma$ is of finite fibre,
 \item $\sigma_\varphi:F^\Gamma\to F^\Gamma$ is of finite fibre,
 \item  $\Gamma\setminus \varphi(\Gamma)$ is a finite set.
\end{itemize}
\end{lemma}
\begin{proof}
Consider $x=(x_\alpha)_{\alpha\in\Gamma}\in F^\Gamma$, then:
\begin{eqnarray*}
\sigma^{-1}_{\varphi,\mathfrak w}(\sigma_{\varphi,\mathfrak w}(x)) & = &
    \{y\in F^\Gamma:\sigma_{\varphi,\mathfrak w}(y)=\sigma_{\varphi,\mathfrak w}(x)\} \\
& = & \{(y_\alpha)_{\alpha\in\Gamma}\in
F^\Gamma:\forall\alpha\in\Gamma, \SP
    \mathfrak{w}_\alpha y_{\varphi(\alpha)}=\mathfrak{w}_\alpha x_{\varphi(\alpha)}\} \\
& = & \{(y_\alpha)_{\alpha\in\Gamma}\in
F^\Gamma:\forall\alpha\in\rm{supp}(\mathfrak{w}), \SP
    y_{\varphi(\alpha)}= x_{\varphi(\alpha)}\} \\
& = & \{(y_\alpha)_{\alpha\in\Gamma}\in
F^\Gamma:\forall\alpha\in\varphi(\rm{supp}(\mathfrak{w})),
\SP
    y_\alpha= x_\alpha\}
\end{eqnarray*}
Hence $\sigma^{-1}_{\varphi,\mathfrak w}(\sigma_{\varphi,\mathfrak w}(x))$ and $F^{\Gamma\setminus
\varphi(\rm{supp}(\mathfrak{w}))}$ are equipotent. Therefore $\sigma_{\varphi,\mathfrak w}$ is of finite
fibre if and only if $\Gamma\setminus
\varphi(\rm{supp}(\mathfrak{w}))$ is finite.
\end{proof}
\begin{corollary}\label{jadid100}
If  $\sigma_{\varphi,\mathfrak{w}}:F^\Gamma\to F^\Gamma$ is of finite fibre,
then 
$\sigma_{\varphi}:F^\Gamma\to F^\Gamma$ 
and $\sigma_{\varphi\restriction_\Lambda,\mathfrak{w}^\Lambda}:F^\Lambda\to F^\Lambda$
are of finite fibre too.
\end{corollary}
\begin{proof}
Suppose $\sigma_{\varphi,\mathfrak{w}}:F^\Gamma\to F^\Gamma$ is of finite fibre, then by Lemma~\ref{badr30},
$\Gamma\setminus\varphi(\rm{supp}(\mathfrak{w}))$ is finite. Hence $\Gamma\setminus\varphi(\rm{supp}((1)_{\alpha\in\Gamma}))=\Gamma\setminus\varphi(\Gamma)
(\subseteq \Gamma\setminus\varphi(\rm{supp}(\mathfrak{w})))$ is finite too. So 
by Lemma~\ref{badr30}, $\sigma_\varphi=\sigma_{\varphi,(1)_{\alpha\in\Gamma}}:F^\Gamma\to F^\Gamma$ is of finite fibre.
\\
Moreover, for
$A:=\{(x_\alpha)_{\alpha\in\Gamma}:\forall\beta\in\Upsilon,
\:\:x_\beta=0\}$,
$\sigma_{\varphi,\mathfrak{w}}\restriction_A:A\to F^\Gamma$ is of
finite fibre too. On the other hand, for each $\alpha\in\Upsilon$
we have:
\begin{equation}\label{Join}
\begin{array}{rcl}
\alpha\in\Upsilon & \Rightarrow & \exists n\geq0\:\:\mathfrak{w}_{\varphi^n(\alpha)}=0 \\
& \Rightarrow & \mathfrak{w}_\alpha=0 \vee (\exists n\geq1,\:\:\mathfrak{w}_{\varphi^n(\alpha)}=0) \\
& \Rightarrow & \mathfrak{w}_\alpha=0 \vee \varphi(\alpha)\in\Upsilon
\end{array}
\end{equation}
So for each $(x_\alpha)_{\alpha\in\Gamma}\in A$ the following
implications are valid:
\begin{eqnarray*}
(x_\alpha)_{\alpha\in\Gamma}\in A & \Rightarrow & \forall\alpha\in\Upsilon\:\: x_\alpha=0 \\
& \mathop{\Rightarrow}\limits^{\ref{Join}} & \forall\alpha\in\Upsilon\:\: x_{\varphi(\alpha)}=0\vee\mathfrak{w}_\alpha=0 \\
& \Rightarrow & \forall\alpha\in\Upsilon\:\: \mathfrak{w}_\alpha x_{\varphi(\alpha)}=0 \\
& \Rightarrow & \sigma_{\varphi,\mathfrak{w}}((x_\alpha)_{\alpha\in\Gamma})\in A\:.
\end{eqnarray*}
Thus $\sigma_{\varphi,\mathfrak{w}}(A)\subseteq A$, and $\sigma_{\varphi,\mathfrak{w}}\restriction_A:A\to A$
is of finite fibre. Since $\mathop{k:A\to F^\Lambda}\limits_{x\mapsto x^\Lambda}$ is bijective
and $\sigma_{\varphi\restriction_\Lambda,\mathfrak{w}^\Lambda}=k\circ\sigma_{\varphi,\mathfrak{w}}\restriction_A\circ k^{-1}$, we have the desired result.
\end{proof}
 In the above corollary stemmed from Lemma~\ref{badr30}, if $\sigma_{\varphi,\mathfrak{w}}:F^\Gamma\to F^\Gamma$ is of finite fibre,
then $\sigma_{\varphi}:F^\Gamma\to F^\Gamma$ is of finite fibre too, the following counterexample shows that the reversed
the implication is not true.
\begin{counterexample}
Let $\Gamma=\mathbb{Z}$,
$\varphi:\mathop{\mathbb{Z}\to\mathbb{Z}}\limits_{n\mapsto n+1}$,
$\mathfrak{w}_{2n}=0$ and $\mathfrak{w}_{2n+1}=1$ for
$n\in\mathbb Z$. Then $\varphi(\Gamma)=\Gamma$ and
$\sigma_{\varphi}:F^\Gamma\to F^\Gamma$ is of finite fibre while
$\rm{supp}(\mathfrak{w})=\Gamma\setminus\varphi(\rm{supp}(\mathfrak{w}))=2\mathbb{Z}+1$ 
is infinite, hence $\sigma_{\varphi,\mathfrak{w}}:F^\Gamma\to
F^\Gamma$ is not of finite fibre.
\end{counterexample}
\subsection{Towards computing $\ecset(\sigma_{\varphi,\mathfrak{w}})$}\label{maryam30}
By Lemma~\ref{badr30}, $\sigma_{\varphi,\mathfrak{w}}:F^\Gamma\to F^\Gamma$ is of finite fibre
if and only if $\Gamma\setminus\varphi(\rm{supp}(\mathfrak{w}))$ is finite.
\begin{convention}\label{maryam20}
\underline{In this subsection suppose} $\sigma_{\varphi,\mathfrak{w}}:F^\Gamma\to F^\Gamma$ is of finite fibre, i.e.
$\Gamma\setminus\varphi(\rm{supp}(\mathfrak{w}))$ is finite.
\end{convention}
Finite fibreness of $\sigma_{\varphi,\mathfrak{w}}:F^\Gamma\to F^\Gamma$ leads us to the following corollaries.
\begin{corollary}\label{badazbadr25}
$\ecset(\sigma_{\varphi,\mathfrak{w}})=\ecset(\sigma_{\varphi\restriction_\Lambda,\mathfrak{w}^\Lambda})$.
\end{corollary}
\begin{proof}
Use Corollary \ref{badr25}.
\end{proof}
\begin{corollary}\label{badr27}
If $\sup(\{n:\exists\alpha\in\Gamma,\:\:(n,\alpha)\in{\mathcal
T}\}\cup\{0\})<+\infty$, then
$\ecset(\sigma_{\varphi,\mathfrak{w}})=0$.
\end{corollary}
\begin{proof}
Use Lemma~\ref{badr28}.
\end{proof}
\noindent In the following  propositions, we restrict ourselves to conditions which make us closer to $\sigma_\varphi$'s situation. Also we will use $\Re$ and $\tilde{\varphi}$ in the 
proof of the following theorem.
\begin{theorem}\label{badr40}
Suppose ${\rm supp}(\mathfrak{w})=\Gamma$ and at least one of the following conditions occurs:
\begin{itemize}
\item $\varphi$ has a non--quasi--periodic point $\alpha\in\Gamma$,
\item ${\rm Per}(\varphi)\neq\varnothing$ and $\sup\{{\rm per}(\alpha):\alpha\in{\rm Per}(\varphi)\}=+\infty$,
\end{itemize}
then $\ecset(\sigma_{\varphi,\mathfrak{w}})=\ecset(\sigma_\varphi)=+\infty$.
\end{theorem}
\begin{proof}
For $r\in F$, let:
\begin{equation}\label{kheybar300}
r^*:=\left\{\begin{array}{lc} 0, &  r=0\:, \\ 1, & 
 r\neq0\:.\end{array}\right.
\end{equation}
Consider $h:F^\Gamma\to\{0,1\}^\Gamma$ with
$h((r_\alpha)_{\alpha\in\Gamma})=(r^*_\alpha)_{\alpha\in\Gamma}$
($(r_\alpha)_{\alpha\in\Gamma}\in F^\Gamma$). For each
$(x_\alpha)_{\alpha\in\Gamma}\in F^\Gamma$, we have:
\begin{eqnarray*}
h(\sigma_{\varphi,\mathfrak{w}}((x_\alpha)_{\alpha\in\Gamma})) & = &
    h((\mathfrak{w}_\alpha x_{\varphi(\alpha)})_{\alpha\in\Gamma}) \\
& = & ((\mathfrak{w}_\alpha x_{\varphi(\alpha)})^*)_{\alpha\in\Gamma}=
    (\mathfrak{w}^*_\alpha x^*_{\varphi(\alpha)})_{\alpha\in\Gamma} \\
& = & (1 x^*_{\varphi(\alpha)})_{\alpha\in\Gamma}=(x^*_{\varphi(\alpha)})_{\alpha\in\Gamma} \\
& = & \sigma_\varphi((x^*_\alpha)_{\alpha\in\Gamma})=\sigma_\varphi(h((x_\alpha)_{\alpha\in\Gamma}))
\end{eqnarray*}
Hence $h\circ\sigma_{\varphi,\mathfrak{w}}=\sigma_\varphi\circ h$ and the following diagram commutes:
\[\xymatrix{F^\Gamma\ar[r]^{\sigma_{\varphi,\mathfrak{w}}}\ar[d]_h & F^\Gamma\ar[d]^h \\
\{0,1\}^\Gamma\ar[r]^{\sigma_\varphi} & \{0,1\}^\Gamma }\] It's
evident that $h:F^\Gamma\to\{0,1\}^\Gamma$ is surjective,
moreover, by Corollary~\ref{jadid100}, \linebreak
$\sigma_\varphi:\{0,1\}^\Gamma\to\{0,1\}^\Gamma$ is of finite fibre.
By ~\cite[Lemma 3.2.22 (b)]{anna} we have:
\begin{equation}\label{+}
\ecset(\sigma_\varphi)\leq\ecset(\sigma_{\varphi,\mathfrak{w}})\:.
\end{equation}
By Lemma~\ref{badr2000},  at least one of the following
conditions occurs:
\\
$\bullet$ $\tilde{\varphi}$ has a non--quasi periodic point in $\frac{\Gamma}{\Re}$,
\\
$\bullet$ ${\rm Per}(\tilde{\varphi})\neq\varnothing$ and
    $\sup\{{\rm per}(D):D\in{\rm Per}(\tilde{\varphi})\}=\sup\{{\rm per}(\alpha):\alpha\in{\rm Per}(\varphi)\}=+\infty$,
\\
then by~\cite[Corollary 3.9]{nili}
\begin{equation}\label{++}
\ecset(\sigma_\varphi)=+\infty\:.
\end{equation}
Using \ref{+} and \ref{++} we have
$\ecset(\sigma_{\varphi,\mathfrak{w}})=\ecset(\sigma_\varphi)=+\infty$.
\end{proof}
\begin{lemma}\label{badr50}
Suppose  
${\rm Per}(\varphi)={\rm Fix}(\varphi)$, 
and ${\rm QPer}(\varphi)={\rm supp}(\mathfrak{w})=\Gamma$,   
then:
\begin{itemize}
\item[1.] $\sigma_{\varphi\restriction_{{\rm Fix}(\varphi)},\mathfrak{w}^{{\rm Fix}(\varphi)}}:
    F^{{\rm Fix}(\varphi)}\to F^{{\rm Fix}(\varphi)}$ is bijective,
\item[2.] $\mathfrak{p}:{\rm sc}(\sigma_{\varphi,\mathfrak{w}})\to F^{{\rm Fix}(\varphi)}$ with $\mathfrak{p}(x)=x^{{\rm Fix}(\varphi)}$, is bijective,
\item[3.] $\ecset(\sigma_{\varphi,\mathfrak{w}})=\ecset(\sigma_{\varphi\restriction_{{\rm Fix}(\varphi)},\mathfrak{w}^{{\rm Fix}(\varphi)}})=0$.
\end{itemize}
\end{lemma}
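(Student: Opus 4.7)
The plan is to derive (1), (2), (3) in sequence. The crucial observation is that, since every $\alpha\in\Gamma$ is quasi--periodic and $\mathrm{Per}(\varphi)=\mathrm{Fix}(\varphi)$, each $\alpha$ possesses a unique eventual fixed point $\gamma_\alpha\in\mathrm{Fix}(\varphi)$ reached at a minimal time $n_0(\alpha)\geq 0$, i.e., $\varphi^k(\alpha)=\gamma_\alpha$ for all $k\geq n_0(\alpha)$. This retraction of $\Gamma$ onto $\mathrm{Fix}(\varphi)$ is the backbone along which everything on $\mathfrak{D}$ will be reconstructed.

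For (1), the restriction $\varphi|_{\mathrm{Fix}(\varphi)}$ is the identity, so $\sigma_{\varphi|_{\mathrm{Fix}(\varphi)},\mathfrak{w}^{\mathrm{Fix}(\varphi)}}$ acts coordinate--wise as $x_\alpha\mapsto\mathfrak{w}_\alpha x_\alpha$. Since each $\mathfrak{w}_\alpha$ is a nonzero element of the field $F$, this is a product of bijections of $F$ and is itself bijective.

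For (2), I plan to establish the explicit formula
\[
x_\alpha \;=\; \mathfrak{w}_{\gamma_\alpha}^{-n_0(\alpha)}\Bigl(\prod_{i=0}^{n_0(\alpha)-1}\mathfrak{w}_{\varphi^i(\alpha)}\Bigr)\,x_{\gamma_\alpha} \qquad (x\in\mathfrak{D},\ \alpha\in\Gamma),
\]
derived by choosing any $n\geq n_0(\alpha)$ and $y$ with $\sigma_{\varphi,\mathfrak{w}}^n(y)=x$: the relation $x_\alpha=(\prod_{i=0}^{n-1}\mathfrak{w}_{\varphi^i(\alpha)})\,y_{\gamma_\alpha}$, combined with the same relation evaluated at $\gamma_\alpha$ (which forces $y_{\gamma_\alpha}=x_{\gamma_\alpha}/\mathfrak{w}_{\gamma_\alpha}^n$), yields the formula. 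Injectivity of $\mathfrak{p}$ is immediate. For surjectivity, given $z\in F^{\mathrm{Fix}(\varphi)}$ I define $x_\alpha$ by the same formula (with $z_{\gamma_\alpha}$ in place of $x_{\gamma_\alpha}$) and, for each $n\geq 1$, construct a preimage $y$ with $\sigma_{\varphi,\mathfrak{w}}^n(y)=x$ by setting $y_\beta:=x_\alpha/\prod_{i=0}^{n-1}\mathfrak{w}_{\varphi^i(\alpha)}$ whenever $\beta=\varphi^n(\alpha)$ and arbitrarily otherwise. The main obstacle is the consistency of this prescription when $\varphi^n(\alpha)=\varphi^n(\alpha')$; splitting into the cases $\beta\in\mathrm{Fix}(\varphi)$ (both candidates reduce to $z_{\gamma_\beta}/\mathfrak{w}_{\gamma_\beta}^n$) and $\beta\notin\mathrm{Fix}(\varphi)$ (using $n_0(\alpha)=n+n_0(\beta)=n_0(\alpha')$, both candidates reduce to the same expression depending only on $\beta$) resolves the compatibility.

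For (3), a one--line coordinate--wise check shows $\mathfrak{p}\circ\sigma_{\varphi,\mathfrak{w}}|_{\mathfrak{D}}=\sigma_{\varphi|_{\mathrm{Fix}(\varphi)},\mathfrak{w}^{\mathrm{Fix}(\varphi)}}\circ\mathfrak{p}$, so by the bijectivity of $\mathfrak{p}$ from (2) we get $\mathsf{a}(\sigma_{\varphi,\mathfrak{w}})=\mathsf{a}(\sigma_{\varphi|_{\mathrm{Fix}(\varphi)},\mathfrak{w}^{\mathrm{Fix}(\varphi)}})$. Finiteness of the multiplicative group $F^*$ gives $\mathfrak{w}_\alpha^{|F|-1}=1$ for every $\alpha\in\mathrm{Fix}(\varphi)$, whence $\sigma_{\varphi|_{\mathrm{Fix}(\varphi)},\mathfrak{w}^{\mathrm{Fix}(\varphi)}}^{|F|-1}$ is the identity on $F^{\mathrm{Fix}(\varphi)}$. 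A bijection of finite order admits no one--to--one anti--orbit, so both anti--orbit numbers vanish, yielding $\ecset(\sigma_{\varphi,\mathfrak{w}})=\ecset(\sigma_{\varphi|_{\mathrm{Fix}(\varphi)},\mathfrak{w}^{\mathrm{Fix}(\varphi)}})=0$.
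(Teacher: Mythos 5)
Your proof is correct and follows essentially the same route as the paper's: part (1) is the identical coordinate--wise argument, part (2) rests on the same reconstruction formula $x_\alpha=\mathfrak{w}_{\gamma_\alpha}^{-n_0(\alpha)}\bigl(\prod_{i<n_0(\alpha)}\mathfrak{w}_{\varphi^i(\alpha)}\bigr)x_{\gamma_\alpha}$ (the paper's Claim~I) for injectivity together with an explicit preimage construction for surjectivity, and part (3) uses the conjugacy via $\mathfrak{p}$. The only cosmetic differences are that you establish surjectivity by a well--definedness check on the fibres of $\varphi^n$ instead of exhibiting the paper's single element $B$ with $\sigma_{\varphi,\mathfrak{w}}^n(B)=A$, and you finish by observing $\sigma_{\varphi\restriction_{{\rm Fix}(\varphi)},\mathfrak{w}^{{\rm Fix}(\varphi)}}^{|F|-1}=\mathrm{id}$ rather than invoking Lemma~\ref{badr27}; both variants are sound.
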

\begin{proof} Using ${\rm QPer}(\varphi)=\Gamma$ we have ${\rm Per}(\varphi)\neq\varnothing$. therefor
${\rm Fix}(\varphi)\neq\varnothing$, since ${\rm Per}(\varphi)={\rm Fix}(\varphi)$.
\\
{\bf 1)} Note that $\varphi\restriction_{{\rm
Fix}(\varphi)}=id_{{\rm Fix}(\varphi)}$, moreover, for each
$(x_\alpha)_{\alpha\in{{\rm
Fix}(\varphi)}},(y_\alpha)_{\alpha\in{{\rm Fix}(\varphi)}}\in
F^{{\rm Fix}(\varphi)}$, we have:
\[\sigma_{\varphi\restriction_{{\rm Fix}(\varphi)},\mathfrak{w}^{{\rm Fix}(\varphi)}}((\mathfrak{w}_\alpha^{-1}y_\alpha)_{\alpha\in{{\rm Fix}(\varphi)}})
=(y_\alpha)_{\alpha\in{{\rm Fix}(\varphi)}}\]
and
\begin{eqnarray*}
\sigma_{\varphi\restriction_{{\rm Fix}(\varphi)},\mathfrak{w}^{{\rm Fix}(\varphi)}}((x_\alpha)_{\alpha\in{{\rm Fix}(\varphi)}})
& = &
\sigma_{\varphi\restriction_{{\rm Fix}(\varphi)},\mathfrak{w}^{{\rm Fix}(\varphi)}}((y_\alpha)_{\alpha\in{{\rm Fix}(\varphi)}})
    \\
& \Rightarrow &
(\mathfrak{w}_\alpha x_\alpha)_{\alpha\in{{\rm Fix}(\varphi)}}
=(\mathfrak{w}_\alpha y_\alpha)_{\alpha\in{{\rm Fix}(\varphi)}}
    \\
& \Rightarrow &
\forall{\alpha\in{{\rm Fix}(\varphi)}}\SP\mathfrak{w}_\alpha x_\alpha
=\mathfrak{w}_\alpha y_\alpha
    \\
& \Rightarrow &
\forall{\alpha\in{{\rm Fix}(\varphi)}}\SP\mathfrak{w}_\alpha^{-1}\mathfrak{w}_\alpha x_\alpha
=\mathfrak{w}_\alpha^{-1}\mathfrak{w}_\alpha y_\alpha
    \\
& \Rightarrow &
\forall{\alpha\in{{\rm Fix}(\varphi)}}\SP x_\alpha=y_\alpha
    \\
& \Rightarrow & (x_\alpha)_{\alpha\in{{\rm Fix}(\varphi)}}=(y_\alpha)_{\alpha\in{{\rm Fix}(\varphi)}}
\end{eqnarray*}
hence $\sigma_{\varphi\restriction_{{\rm Fix}(\varphi)},\mathfrak{w}^{{\rm Fix}(\varphi)}}:
    F^{{\rm Fix}(\varphi)}\to F^{{\rm Fix}(\varphi)}$ is bijective.
\\
{\bf 2)} We prove this item via the following claims:
\\
{\it Claim I.} For $\beta\in\Gamma$, $q\geq2$ with
$\varphi^q(\beta)\in{\rm Fix}(\varphi)$ and
$x=(x_\alpha)_{\alpha\in\Gamma}\in {\rm
sc}(\sigma_{\varphi,\mathfrak{w}})$, we have
$x_\beta=\mathfrak{w}_{\varphi^q(\beta)}^{-q}\mathfrak{w}_\beta\mathfrak{w}_{\varphi(\beta)}\cdots
    \mathfrak{w}_{\varphi^{q-1}(\beta)} x_{\varphi^q(\beta)}$.
\\
{\it Proof of Claim I.} Consider $x=(x_\alpha)_{\alpha\in\Gamma}\in {\rm
sc}(\sigma_{\varphi,\mathfrak{w}})$. For $\beta\in\Gamma$, there
exists $q\geq2$ such that $\varphi^q(\beta)\in {\rm
Fix}(\varphi)$. There exists $z=(z_\alpha)_{\alpha\in\Gamma}$ with
$\sigma_{\varphi,\mathfrak{w}}^q(z)=x$.
\begin{eqnarray*}
\sigma_{\varphi,\mathfrak{w}}^q(z)=x & \Rightarrow & \forall\alpha\in\Gamma\:\:
    \mathfrak{w}_\alpha\cdots\mathfrak{w}_{\varphi^{q-1}(\alpha)}z_{\varphi^q(\alpha)}=x_\alpha \\
& \Rightarrow & \forall s\geq0  \:\:
    \mathfrak{w}_{\varphi^s(\beta)}\cdots\mathfrak{w}_{\varphi^{s+q-1}(\beta)}z_{\varphi^{s+q}
    (\beta)}=x_{\varphi^s(\beta)} \\
& \Rightarrow & \forall s\geq0  \:\:
    \mathfrak{w}_{\varphi^s(\beta)}\cdots\mathfrak{w}_{\varphi^{s+q-1}(\beta)}z_{\varphi^{q}
    (\beta)}=x_{\varphi^s(\beta)} \\
& \Rightarrow & \forall s\geq0  \:\:
    \mathfrak{w}_{\varphi^s(\beta)}x_{\varphi^{s+1}(\beta)}=x_{\varphi^s(\beta)}\mathfrak{w}_{\varphi^{s+q}(\beta)} \\
& \Rightarrow & \forall s\geq0  \:\:
    \mathfrak{w}_{\varphi^s(\beta)}x_{\varphi^{s+1}(\beta)}=x_{\varphi^s(\beta)}\mathfrak{w}_{\varphi^{q}(\beta)}
\end{eqnarray*}
Therefore:
\begin{eqnarray*}
x_\beta & = &\mathfrak{w}_{\varphi^q(\beta)}^{-1}\mathfrak{w}_\beta x_{\varphi(\beta)} \\
& = & \mathfrak{w}_{\varphi^q(\beta)}^{-2}\mathfrak{w}_\beta\mathfrak{w}_{\varphi(\beta)} x_{\varphi^2(\beta)} \\
& = & \mathfrak{w}_{\varphi^q(\beta)}^{-3}\mathfrak{w}_\beta\mathfrak{w}_{\varphi(\beta)}
    \mathfrak{w}_{\varphi^2(\beta)} x_{\varphi^3(\beta)} \\
&& \vdots \\
& = & \mathfrak{w}_{\varphi^q(\beta)}^{-q}\mathfrak{w}_\beta\mathfrak{w}_{\varphi(\beta)}\cdots
    \mathfrak{w}_{\varphi^{q-1}(\beta)} x_{\varphi^q(\beta)}
\end{eqnarray*}
\\
{\it Claim II.} $\mathfrak{p}:{\rm sc}(\sigma_{\varphi,\mathfrak{w}})\to F^{{\rm Fix}(\varphi)}$ is one to one.
\\
{\it Proof of Claim II.} Consider
$x=(x_\alpha)_{\alpha\in\Gamma},y=(y_\alpha)_{\alpha\in\Gamma}\in
{\rm sc}(\sigma_{\varphi,\mathfrak{w}})$ with
$\mathfrak{p}(x)=\mathfrak{p}(y)$. Choose $\beta\in\Gamma$, there
exists $q\geq2$ with $\varphi^q(\beta)\in{\rm Fix}(\varphi)$, now
we have:
\\
{\small $\mathfrak{p}(x)=\mathfrak{p}(y) $
\begin{eqnarray*}
& \Rightarrow & \forall\alpha\in{\rm Fix}(\varphi),\:\: x_\alpha=y_\alpha \\
& \mathop{\Rightarrow}\limits^{\varphi^q(\beta)\in{\rm Fix}(\varphi)} & x_{\varphi^q(\beta)}=y_{\varphi^q(\beta)} \\
& \Rightarrow & \mathfrak{w}_{\varphi^q(\beta)}^{-q}\mathfrak{w}_\beta\mathfrak{w}_{\varphi(\beta)}\cdots
    \mathfrak{w}_{\varphi^{q-1}(\beta)} x_{\varphi^q(\beta)}=
    \mathfrak{w}_{\varphi^q(\beta)}^{-q}\mathfrak{w}_\beta\mathfrak{w}_{\varphi(\beta)}\cdots
    \mathfrak{w}_{\varphi^{q-1}(\beta)} y_{\varphi^q(\beta)} \\
& \mathop{\Rightarrow}\limits^{({\rm Claim \: I})} & x_\beta=y_\beta
\end{eqnarray*}}
Since $x_\beta=y_\beta$ for all $\beta\in\Gamma$, we have $x=y$ and $\mathfrak{p}$ is one to one.
\\
{\it Claim III.}
For all $x=(x_\alpha)_{\alpha\in{\rm Fix}(\varphi)},y=(y_\alpha)_{\alpha\in{\rm Fix}(\varphi)}\in F^{{\rm Fix}(\varphi)}$ and $n\geq1$ we have $\sigma_{\varphi\restriction_{{\rm Fix}(\varphi)},\mathfrak{w}^{{\rm Fix}(\varphi)}}^n(y)=x$
if and only if $\mathfrak{w}_\theta^{n}y_\theta=x_\theta$ for all $\theta\in{\rm Fix}(\varphi)$.
\\
{\it Proof of Claim III.} We have:
\begin{eqnarray*}
\sigma_{\varphi\restriction_{{\rm Fix}(\varphi)},\mathfrak{w}^{{\rm Fix}(\varphi)}}^n(y)=x & \Rightarrow &
    \forall\theta\in{\rm Fix}(\varphi)\:\:
    \mathfrak{w}_\theta\mathfrak{w}_{\varphi(\theta)}\cdots\mathfrak{w}_{\varphi^{n-1}(\theta)}y_{\varphi^n(\theta)}=x_\theta \\
& \Rightarrow &     \forall\theta\in{\rm Fix}(\varphi)\:\:\mathfrak{w}_\theta^{n}y_\theta=x_\theta
\end{eqnarray*}
\\
{\it Claim IV.} $\mathfrak{p}:{\rm
sc}(\sigma_{\varphi,\mathfrak{w}})\to F^{{\rm Fix}(\varphi)}$ is
surjective. 
\\
{\it Proof of Claim IV.} Choose $x=(x_\alpha)_{\alpha\in{\rm Fix}(\varphi)}\in
F^{{\rm Fix}(\varphi)}$. For $n\geq1$, there exists
$y=(y_\alpha)_{\alpha\in{\rm Fix}(\varphi)}\in F^{{\rm
Fix}(\varphi)}$ with $\sigma_{\varphi\restriction_{{\rm
Fix}(\varphi)},\mathfrak{w}^{{\rm Fix}(\varphi)}}^n(y)=x$. For
each $\beta\in\Gamma$, choose $q_\beta\geq2$ such that
$\varphi^{q_\beta}(\beta)\in {\rm Fix}(\varphi)$. Let:
\[ A:=(\mathfrak{w}_{\varphi^{q_\alpha}(\alpha)}^{-q_\alpha}\mathfrak{w}_\alpha\mathfrak{w}_{\varphi(\alpha)}\cdots
    \mathfrak{w}_{\varphi^{q_\alpha-1}(\alpha)} x_{\varphi^{q_\alpha}(\alpha)})_{\alpha\in\Gamma}\:,\]
\[B:=(\mathfrak{w}_{\varphi^{q_\alpha}(\alpha)}^{-q_\alpha}\mathfrak{w}_\alpha\mathfrak{w}_{\varphi(\alpha)}\cdots
    \mathfrak{w}_{\varphi^{q_\alpha-1}(\alpha)} y_{\varphi^{q_\alpha}(\alpha)})_{\alpha\in\Gamma}\:.\]
Note that for each $\alpha\in\Gamma$ and $m\geq0$ we have $\varphi^{q_\alpha}(\alpha)=\varphi^{q_\alpha+m}(\alpha)$ and:
\begin{equation}\label{divideontimes}
\mathfrak{w}_{\varphi^{q_\alpha+m}(\alpha)}^{-q_\alpha-m}\mathfrak{w}_\alpha\mathfrak{w}_{\varphi(\alpha)}\cdots
    \mathfrak{w}_{\varphi^{q_\alpha+m-1}(\alpha)}=\mathfrak{w}_{\varphi^{q_\alpha}(\alpha)}^{-q_\alpha}\mathfrak{w}_\alpha\mathfrak{w}_{\varphi(\alpha)}\cdots
    \mathfrak{w}_{\varphi^{q_\alpha-1}(\alpha)}\:.
\end{equation}
So $A$ and $B$ don't depend on the way of choosing $q_\alpha$s.
Moreover, for each $\alpha\in\Gamma$, 
we have (using
\ref{divideontimes} we may suppose
$q_\alpha=q_{\varphi^n(\alpha)}=:q$):

$\alpha$th coordinate of $\sigma_{\varphi,\mathfrak{w}}^n(B)=  \mathfrak{w}_\alpha\mathfrak{w}_{\varphi(\alpha)}\cdots\mathfrak{w}_{\varphi^{n-1}(\alpha)}(\varphi^n(\alpha)th
    {\rm \: coordinate \: of \:}B)
$

$ =  \bigg(\mathop{\prod}\limits_{0\leq i\leq n-1}\mathfrak{w}_{\varphi^i(\alpha)} \bigg)
    \mathfrak{w}_{\varphi^{q_{\varphi^n(\alpha)}}(\varphi^n(\alpha))}^{-q_{\varphi^n(\alpha)}}
    \bigg(\mathop{\prod}\limits_{0\leq i\leq q_{\varphi^n(\alpha)}-1}\mathfrak{w}_{\varphi^i(\varphi^n(\alpha))}\bigg)
    y_{\varphi^{q_{\varphi^n(\alpha)}}(\varphi^n(\alpha))} 
$
    \begin{eqnarray*}
& =& \bigg(\mathop{\prod}\limits_{0\leq i\leq n-1}\mathfrak{w}_{\varphi^i(\alpha)}\bigg)
    \mathfrak{w}_{\varphi^{q}(\varphi^n(\alpha))}^{-q}
    \bigg(\mathop{\prod}\limits_{0\leq i\leq q-1}\mathfrak{w}_{\varphi^i(\varphi^n(\alpha))}\bigg)
    y_{\varphi^{q}(\varphi^n(\alpha))} \\ && \\
& = & \bigg ( \mathop{\prod}\limits_{0\leq i\leq q+n-1}\mathfrak{w}_{\varphi^i(\alpha)}\bigg)
    \mathfrak{w}_{\varphi^{q}(\varphi^n(\alpha))}^{-q}y_{\varphi^{q}(\varphi^n(\alpha))} \\ && \\ 
& \mathop{=}\limits^{\varphi^q(\alpha)\in{\rm Fix(\varphi)}} & \bigg(\mathop{\prod}\limits_{0\leq i\leq q+n-1}\mathfrak{w}_{\varphi^i(\alpha)}\bigg)
    \mathfrak{w}_{\varphi^{q}(\alpha)}^{-q}y_{\varphi^{q}(\alpha)} \\ && \\ 
& \mathop{=}\limits^{\forall i\geq q\:\varphi^i(\alpha)=\varphi^q(\alpha)} &  \bigg(\mathop{\prod}\limits_{0\leq i\leq q-1}\mathfrak{w}_{\varphi^i(\alpha)}\bigg)
    \mathfrak{w}_{\varphi^{q}(\alpha)}^{-q+n}y_{\varphi^{q}(\alpha)} \\ && \\
& \mathop{=}\limits^{({\rm Claim \: III})} & \bigg(\mathop{\prod}\limits_{0\leq i\leq q-1}\mathfrak{w}_{\varphi^i(\alpha)}\bigg)
    \mathfrak{w}_{\varphi^{q}(\alpha)}^{-q}x_{\varphi^{q}(\alpha)} 
= \alpha{\rm th \: coordinate \: of \:} A
\end{eqnarray*}
Thus
$A=\sigma_{\varphi,\mathfrak{w}}^n(B)\in\sigma_{\varphi,\mathfrak{w}}^n(F^\Gamma)$
which leads to
$A\in\bigcap\{\sigma_{\varphi,\mathfrak{w}}^n(F^\Gamma):n\geq1\}={\rm
sc}(\sigma_{\varphi,\mathfrak{w}})$, on the other hand,
$\mathfrak{p}(A)=x$ and $\mathfrak{p}:{\rm
sc}(\sigma_{\varphi,\mathfrak{w}})\to F^{{\rm Fix}(\varphi)}$ is
surjective.
\\
{\bf 3)} Consider the following commutative diagram:
\[\xymatrix{{\rm sc}(\sigma_{\varphi,\mathfrak{w}})\ar[rrr]^{\sigma_{\varphi,\mathfrak{w}}\restriction_{{\rm sc}(\sigma_{\varphi,\mathfrak{w}})}}\ar[d]_p &&& {\rm sc}(\sigma_{\varphi,\mathfrak{w}})\ar[d]^p \\
F^{{\rm Fix}(\varphi)}\ar[rrr]^{\sigma_{\varphi\restriction_{{\rm Fix}(\varphi)},\mathfrak{w}^{{\rm Fix}(\varphi)}}} &&& F^{{\rm Fix}(\varphi)}}\]
So $\sigma_{\varphi,\mathfrak{w}}\restriction_{{\rm sc}(\sigma_{\varphi,\mathfrak{w}})}=\mathfrak{p}^{-1}\circ\sigma_{\varphi\restriction_{{\rm Fix}(\varphi)},\mathfrak{w}^{{\rm Fix}(\varphi)}}\circ\mathfrak{p}$. Hence (note that for finite fibre $f:X\to X$, a sequence is an $f-$anti--orbit if and only if it is an
$f\restriction_{{\rm sc}(f)}-$anti--orbit, also use \cite[Lemma 3.2.22 (c)]{anna}):
\begin{eqnarray*}
\ecset(\sigma_{\varphi,\mathfrak{w}}) & = &
\mathsf{a}(\sigma_{\varphi,\mathfrak{w}}\restriction_{{\rm sc}(\sigma_{\varphi,\mathfrak{w}})})
 =  \mathsf{a}(\mathfrak{p}^{-1}\circ\sigma_{\varphi\restriction_{{\rm Fix}(\varphi)},\mathfrak{w}^{{\rm Fix}(\varphi)}}\circ\mathfrak{p}) \\
 & = & \ecset(\mathfrak{p}^{-1}\circ\sigma_{\varphi\restriction_{{\rm Fix}(\varphi)},\mathfrak{w}^{{\rm Fix}(\varphi)}}\circ\mathfrak{p})
=  \ecset(\sigma_{\varphi\restriction_{{\rm Fix}(\varphi)},\mathfrak{w}^{{\rm Fix}(\varphi)}})\:.
\end{eqnarray*}
Moreover, by Lemma~\ref{badr28},
$\mathsf{a}(\sigma_{\varphi\restriction_{{\rm
Fix}(\varphi)},\mathfrak{w}^{{\rm Fix}(\varphi)}})=0$, which
completes the proof.
\end{proof}
\begin{lemma}\label{badr60}
If ${\rm supp}(\mathfrak{w})=\Gamma$, 
${\rm QPer}(\varphi)=\Gamma$ and $\sup\{{\rm per}(\alpha):\alpha\in{\rm Per}(\varphi)\}<+\infty$,
then
$\ecset(\sigma_{\varphi,\mathfrak{w}})=\ecset(\sigma_\varphi)=0$.
\end{lemma}
\begin{proof}
By Corollary~\ref{jadid100},
$\sigma_\varphi:F^\Gamma\to F^\Gamma$ is of finite fibre  too.
Let $\mathfrak{u}=(\mathfrak{u}_\alpha)_{\alpha\in\Gamma}=(1)_{\alpha\in\Gamma}$.
Suppose
$\sup\{{\rm per}(\alpha):\alpha\in{\rm Per}(\varphi)\}=N<+\infty$. All periodic points of
$\varphi^N$ are fixed points of $\varphi^N$ and all points of $\Gamma$ are quasi--periodic points of $\varphi^N$,
by Lemma~\ref{badr50}:
\[\ecset(\sigma_{\varphi^N,\mathfrak{u}})= \ecset(\sigma_{\varphi^N,\mathfrak{w}\sigma_\varphi(\mathfrak{w})\cdots\sigma_{\varphi^{N-1}}(\mathfrak{w})})=0\:.\]
i.e. $\ecset(\sigma_{\varphi,\mathfrak{u}}^N)=
\ecset(\sigma_{\varphi,\mathfrak{w}}^N)=0$, which leads to (use
\cite[Proposition 3.2.40 (Logarithmic Law)]{anna}):
\[N\ecset(\sigma_{\varphi,\mathfrak{u}})= N\ecset(\sigma_{\varphi,\mathfrak{w}})=0\]
and
\[\ecset(\sigma_{\varphi,\mathfrak{u}})= \ecset(\sigma_{\varphi,\mathfrak{w}})=0\:.\]
Use the above cases and $\sigma_{\varphi,\mathfrak{u}}=\sigma_\varphi$ to complete the proof.
\end{proof}
\begin{corollary}
By Theorem~\ref{badr40} and Lemma~\ref{badr60}, if ${\rm supp}(\mathfrak{w})=\Gamma$, 
then
$\ecset(\sigma_{\varphi,\mathfrak{w}})=\ecset(\sigma_\varphi)(\in\{0,+\infty\})$.
\end{corollary}
\begin{theorem}\label{badr3000}
We have:
\begin{eqnarray*}
\ecset(\sigma_{\varphi,\mathfrak{w}}) & = & \ecset(\sigma_{\varphi\restriction_\Lambda,\mathfrak{w}^\Lambda})
 =  \ecset(\sigma_{\varphi\restriction_\Lambda}) \\
& = & \left\{\begin{array}{lc} +\infty, & \varphi {\rm \: has \: a \: non-quasi-periodic \: point \: in\:}\Lambda\:, \\
    +\infty, & {\rm Per}(\varphi)\cap\Lambda\neq\varnothing\wedge\sup\{{\rm per}(\alpha):
    \alpha\in {\rm Per}(\varphi)\cap\Lambda\}=+\infty\:, \\
    0, & {\rm otherwise}\:.
    \end{array}\right.
\end{eqnarray*}
\end{theorem}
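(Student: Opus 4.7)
The proof is essentially an assembly of the lemmas already established, in three reduction steps followed by a case analysis. First I would apply Corollary~\ref{badr25} directly to obtain the first equality
\[
\ecset(\sigma_{\varphi,\mathfrak{w}})=\ecset(\sigma_{\varphi\restriction_\Lambda,\mathfrak{w}^\Lambda}),
\]
using also that $\Lambda$ is $\varphi$-invariant (if $\varphi(\alpha)\in\Upsilon$, some iterate $\varphi^{n+1}(\alpha)\in\mathfrak{Z}$, so $\alpha\in\Upsilon$). Lemma~\ref{badr1000} then gives that $\sigma_{\varphi\restriction_\Lambda,\mathfrak{w}^\Lambda}:F^\Lambda\to F^\Lambda$ is finite fibre.

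Second, by the very definition of $\Lambda=\Gamma\setminus\Upsilon$, every $\alpha\in\Lambda$ satisfies $\mathfrak{w}_\alpha\neq 0$ (indeed $\mathfrak{w}_{\varphi^n(\alpha)}\neq 0$ for all $n\geq 0$). Thus the hypotheses of Lemma~\ref{badr60} hold for the triple $(\Lambda,\varphi\restriction_\Lambda,\mathfrak{w}^\Lambda)$, giving
\[
\ecset(\sigma_{\varphi\restriction_\Lambda,\mathfrak{w}^\Lambda})=\ecset(\sigma_{\varphi\restriction_\Lambda}),
\]
which is the second equality.

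Third, I would perform the case analysis on $\varphi\restriction_\Lambda$. Because $\Lambda$ is $\varphi$-invariant, the forward orbit of any $\alpha\in\Lambda$ stays in $\Lambda$, so $\alpha$ is non-quasi-periodic (respectively periodic of period $k$) with respect to $\varphi\restriction_\Lambda$ if and only if it has the same property with respect to $\varphi$. Hence the conditions ``$\varphi$ has a non-quasi-periodic point in $\Lambda$'' and ``$\sup\{{\rm per}(\alpha):\alpha\in{\rm Per}(\varphi)\cap\Lambda\}=+\infty$'' translate directly into conditions on $\varphi\restriction_\Lambda$. If either holds, Lemma~\ref{badr40} (applied with the constant weight $\mathfrak{u}\equiv 1$ on $\Lambda$) yields $\ecset(\sigma_{\varphi\restriction_\Lambda})=+\infty$. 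Otherwise all points of $\Lambda$ are quasi-periodic and periods are bounded by some $N$, and the Case~2 argument of Lemma~\ref{badr60} applies verbatim: all periodic points of $(\varphi\restriction_\Lambda)^N$ are fixed, all points are quasi-periodic for $(\varphi\restriction_\Lambda)^N$, so Lemma~\ref{badr50} gives $\ecset(\sigma_{(\varphi\restriction_\Lambda)^N})=0$, and the Logarithmic Law~\cite[Proposition 3.2.40]{anna} gives $\ecset(\sigma_{\varphi\restriction_\Lambda})=0$.

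There is no real obstacle; the substance of the theorem is concentrated in Lemmas~\ref{badr40}, \ref{badr50}, \ref{badr60} and Corollary~\ref{badr25}. The only bookkeeping worth care is confirming the $\varphi$-invariance of $\Lambda$ (needed to apply Lemma~\ref{badr20}-style reasoning and to identify periodic/non-quasi-periodic points of $\varphi\restriction_\Lambda$ with those of $\varphi$ lying in $\Lambda$), the nonvanishing of $\mathfrak{w}^\Lambda$ (immediate from the definition of $\Upsilon\supseteq\mathfrak{Z}$), and the trivial boundary case $\Lambda=\varnothing$, which falls under Corollary~\ref{badr25}'s first clause and gives $0$, consistent with the ``otherwise'' branch (the two infinite clauses being vacuous when $\Lambda=\varnothing$).
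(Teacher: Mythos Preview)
Your proposal is correct and follows essentially the same route as the paper: the paper's proof is the two-line ``By Corollary~\ref{badr25}, \ldots\ Use Lemma~\ref{badr60} to complete the proof,'' and you have simply unpacked this, making explicit the finite-fibre check via Lemma~\ref{badr1000}, the nonvanishing of $\mathfrak{w}^\Lambda$, and the case split (Lemma~\ref{badr40} for the $+\infty$ branches, Lemma~\ref{badr50} plus the Logarithmic Law for the zero branch) that is already contained in the proof of Lemma~\ref{badr60}.
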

\begin{proof}
By Corollary~\ref{badazbadr25}, we have
$\ecset(\sigma_{\varphi,\mathfrak{w}})=\ecset(\sigma_{\varphi\restriction_\Lambda,\mathfrak{w}^\Lambda})$.
We have the following cases (note that for all $\alpha\in\Lambda$, ${\mathfrak w}_\alpha\neq 0$):
\begin{itemize}
\item[1.] if ${\rm QPer}(\varphi\restriction_\Lambda)\neq\Lambda$, i.e. $\varphi$ has a non--quasi--periodic point in $\Lambda$, then 
by Theorem~\ref{badr40},
$\ecset(\sigma_{\varphi\restriction_\Lambda,\mathfrak{w}^\Lambda})=+\infty$,
\item[2.] if ${\rm Per}(\varphi\restriction_\Lambda)\neq\varnothing$ and $\sup\{{\rm per}(\alpha):\alpha\in{\rm Per}(\varphi\restriction_\Lambda)\}=+\infty$, then 
by Theorem~\ref{badr40},
$\ecset(\sigma_{\varphi\restriction_\Lambda,\mathfrak{w}^\Lambda})=+\infty$
(note that ${\rm Per}(\varphi\restriction_\Lambda)={\rm Per}(\varphi)\cap\Lambda$),
\item[3.] if ${\rm  QPer}(\varphi\restriction_\Lambda)=\Lambda$ and $\sup\{{\rm per}(\alpha):\alpha\in{\rm Per}(\varphi\restriction_\Lambda)\}<+\infty$, i.e. neither (1) occurs nor (2) occurs, then 
by Lemma~\ref{badr60},
$\ecset(\sigma_{\varphi\restriction_\Lambda,\mathfrak{w}^\Lambda})=0$.
\end{itemize}
\end{proof}
\section{Interaction between possible set--theoretical entropies of $\sigma_{\varphi,\mathfrak{w}}$}
\noindent In this section, we try to find out the interaction between possible set--theoretical
entropies arised from generalized and weighted generalized shifts. In this section, we try once more the above note via 
two corollaries and then a table. 
\begin{corollary}\label{badr6000}
Suppose
$\eset(\sigma_{\varphi})=+\infty$ and $\eset(\sigma_{\varphi,\mathfrak{w}})=0$,
then $\sigma_{\varphi,\mathfrak{w}}:F^\Gamma\to F^\Gamma$ is not finite fibre.
\end{corollary}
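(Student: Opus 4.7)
I proceed by contradiction: assume $\sigma_{\varphi,\mathfrak{w}}:F^\Gamma\to F^\Gamma$ is finite fibre. By Lemma~\ref{salam10} combined with $\eset(\sigma_{\varphi,\mathfrak{w}})=0$, we have $p-1:=\sup\{k:\exists\alpha,(k,\alpha)\in\mathcal{T}\}<+\infty$, so it suffices to exhibit $(k-1,\alpha)\in\mathcal{T}$ with $k>p$. By Lemma~\ref{badr30}, $S:=\Gamma\setminus\varphi(\Gamma\setminus\mathfrak{Z})$ is finite. Fix (via choice) an injective partial section $\psi:\Gamma\setminus S\to\Gamma\setminus\mathfrak{Z}$ with $\varphi\circ\psi=\mathrm{id}$, and set $U_k:=\{\beta\in\Gamma:\psi^j(\beta)\in\Gamma\setminus S\text{ for }0\leq j\leq k-1\}$. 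Injectivity of the iterates of $\psi$ yields $|\Gamma\setminus U_k|\leq k|S|$, so each $U_k$ is cofinite in $\Gamma$.

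The heart of the argument is: for a non-$\varphi$-periodic $\beta\in U_k$, the chain $\psi(\beta),\psi^2(\beta),\ldots,\psi^k(\beta)$ consists of $k$ pairwise distinct elements of $\Gamma\setminus\mathfrak{Z}$. Indeed, $\psi^j(\beta)=\psi^{j'}(\beta)$ for $1\leq j<j'\leq k$ would give $\beta=\varphi^j(\psi^{j'}(\beta))=\psi^{j'-j}(\beta)$, hence $\varphi^{j'-j}(\beta)=\beta$, contradicting non-periodicity. Setting $\alpha:=\psi^k(\beta)$, we have $\varphi^i(\alpha)=\psi^{k-i}(\beta)$ for $0\leq i\leq k-1$, so $(k-1,\alpha)\in\mathcal{T}$. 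The task thus reduces to locating a non-periodic $\beta\in U_k$ for $k>p$.

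If $\Gamma\setminus\mathrm{Per}(\varphi)$ is infinite, then because $U_k$ is cofinite it meets this set and we are done. If $\Gamma\setminus\mathrm{Per}(\varphi)$ is finite of cardinality $P$, the preperiod of every $\varphi$-orbit is at most $P$. Combined with $\eset(\sigma_\varphi)=+\infty$, which yields one-to-one $\varphi$-orbits of arbitrary length, $\varphi$ must contain $\varphi$-cycles of arbitrarily large length. A cycle $C_L$ of length $L>p$ with $C_L\cap\mathfrak{Z}=\varnothing$ would give $(L-1,\alpha)\in\mathcal{T}$ for every $\alpha\in C_L$, contradiction; hence every long cycle meets $\mathfrak{Z}$. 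Now, for each such $C_L$, the $\varphi$-image of any $\gamma\in\mathfrak{Z}\cap C_L$ is an element of $C_L$ whose only in-cycle preimage $\gamma$ lies in $\mathfrak{Z}$, so it belongs to $S$ unless rescued by a preimage outside $C_L$, which must then be non-periodic. Since there are at most $P$ non-periodic elements and infinitely many long cycles, pigeonhole forces infinitely many distinct contributions to $S$, contradicting $|S|<+\infty$.

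The subtle step is this last pigeonhole: distinct cycles have disjoint preperiod tails, so a given non-periodic element can rescue a zero-successor in at most one cycle, and the finite rescue budget $P$ cannot cover the infinitely many long cycles forced by $\eset(\sigma_\varphi)=+\infty$. The rest of the argument is a fairly clean application of backward iteration through the injective partial inverse $\psi$, with the case split handling the absence of non-periodic elements.
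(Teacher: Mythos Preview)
Your proof is correct and follows a genuinely different route from the paper's.

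The paper argues by passing to a carefully chosen iterate: from $\eset(\sigma_{\varphi,\mathfrak{w}})=0$ it extracts $N$ so that, writing $\eta=\varphi^N$ and $\mathfrak{v}=\mathfrak{w}\sigma_\varphi(\mathfrak{w})\cdots\sigma_{\varphi^{N-1}}(\mathfrak{w})$, one has $\Gamma=Z\cup\eta^{-1}(\mathrm{Fix}(\eta))$ with $Z=\{\alpha:\mathfrak{v}_\alpha=0\}$. A short inclusion then gives $\Gamma\setminus\eta(\Gamma\setminus Z)\supseteq Z\setminus\mathrm{Fix}(\eta)$, and the hypothesis $\eset(\sigma_\varphi)=+\infty$ supplies arbitrarily long injective $\varphi$-orbit segments, forcing $Z\setminus\mathrm{Fix}(\eta)$ to be infinite. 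Hence $\sigma_{\varphi,\mathfrak{w}}^N=\sigma_{\eta,\mathfrak{v}}$ is not finite fibre, and neither is $\sigma_{\varphi,\mathfrak{w}}$.

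Your argument instead stays at the level of $\varphi$ itself and is more combinatorial: you build an injective partial right inverse $\psi$ of $\varphi$ valued in $\Gamma\setminus\mathfrak{Z}$, show its iterates are defined on cofinite sets, and observe that iterating $\psi$ from any non-periodic point manufactures elements of $\mathcal{T}$ of arbitrary length. The residual case where non-periodic points are scarce is then handled by a clean pigeonhole on the infinitely many long $\varphi$-cycles forced by $\eset(\sigma_\varphi)=+\infty$. What this buys you is that you never need the iterate trick or the structural identity $\Gamma=Z\cup\eta^{-1}(\mathrm{Fix}(\eta))$; what the paper's approach buys is a uniform argument with no case split, since after passing to $\eta$ every long orbit segment directly lands in $Z\setminus\mathrm{Fix}(\eta)$.
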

\begin{proof}
Suppose $\eset(\sigma_{\varphi})=+\infty$ and
$\eset(\sigma_{\varphi,\mathfrak{w}})=0$. By Theorem~\ref{ssalam10},
there exist $p>q\geq1$, such that, for each $\alpha\in\Gamma$, we
have
\[\varphi^p(\alpha)=\varphi^q(\alpha)\vee
\mathfrak{w}_\alpha\mathfrak{w}_{\varphi(\alpha)}\cdots\mathfrak{w}_{\varphi^q(\alpha)}=0\:.\]
Let $s:=p-q$, then for each $\alpha\in\Gamma$ with
$\varphi^p(\alpha)=\varphi^q(\alpha)$, we have:
\begin{eqnarray*}
\varphi^p(\alpha)=\varphi^q(\alpha) & \Rightarrow & \varphi^s(\varphi^q(\alpha))=\varphi^q(\alpha) \\
& \Rightarrow & \varphi^s(\varphi^{qs}(\alpha))=\varphi^{qs-q}(\varphi^s(\varphi^q(\alpha)))
    =\varphi^{qs-q}(\varphi^q(\alpha))=\varphi^{qs}(\alpha) \\
& \Rightarrow & \varphi^{2qs}(\alpha)=\varphi^{qs}(\varphi^{qs}(\alpha))=\varphi^{qs}(\alpha)
\end{eqnarray*}
hence for $N:=2qs$ we have:
\begin{equation}\label{boxplus}
\forall\alpha\in\Gamma\:\:\: (\varphi^{2N}(\alpha)=\varphi^N(\alpha)\vee
\mathfrak{w}_\alpha\mathfrak{w}_{\varphi(\alpha)}\cdots\mathfrak{w}_{\varphi^{N-1}(\alpha)}=0)\:.
\end{equation}
Let $\eta:=\varphi^N,\mathfrak{v}:=\mathfrak{w}\sigma_\varphi({\mathfrak w})\cdots\sigma_\varphi^{N-1}({\mathfrak w})$,
then:
\begin{itemize}
\item $\sigma_{\eta,\mathfrak{v}}=\sigma_{\varphi,\mathfrak{w}}^N$  and $\sigma_\eta=\sigma_\varphi^N$,
\item $\eset(\sigma_{\eta,\mathfrak{v}})=\eset(\sigma_{\varphi,\mathfrak{w}}^N)=N\eset(\sigma_{\varphi,\mathfrak{w}})=0$,
\item $\eset(\sigma_\eta)=\eset(\sigma_{\varphi}^N)=N\eset(\sigma_{\varphi})=+\infty$.
\end{itemize}
Let $Z=\Gamma\setminus\rm{supp}(\mathfrak{v})$, then by \ref{boxplus} we have:
\begin{equation}\label{boxplusboxplus}
\forall\alpha\in\Gamma\:\:(\mathfrak{v}_\alpha=0\vee\eta(\alpha)\in{\rm Fix}(\eta))\:,
\end{equation}
i.e., $\Gamma=Z\cup\eta^{-1}({\rm Fix}(\eta))$. Therefore:
\begin{eqnarray*}
\Gamma\setminus\eta(\rm{supp}(\mathfrak{v})) & = & (Z\cup\eta^{-1}({\rm Fix}(\eta)))\setminus\eta((Z\cup\eta^{-1}({\rm Fix}(\eta)))\setminus Z) \\
& = & (Z\cup\eta^{-1}({\rm Fix}(\eta)))\setminus\eta(\eta^{-1}({\rm Fix}(\eta))\setminus Z) \\
& \supseteq & (Z\cup\eta^{-1}({\rm Fix}(\eta)))\setminus\eta(\eta^{-1}({\rm Fix}(\eta))) \\
& = & (Z\cup\eta^{-1}({\rm Fix}(\eta)))\setminus{\rm Fix}(\eta) \\
& \supseteq & Z\setminus {\rm Fix}(\eta)
\end{eqnarray*}
Since $\eset(\sigma_\eta)=+\infty$, $F^\Gamma$ and $\Gamma$ are
infinite also for each
$t\geq2$, there exists $\beta\in\Gamma$ such that
$| \{\beta,\varphi(\beta),\ldots,\varphi^t(\beta)\} | = t+1$. So
$\beta,\varphi(\beta),\ldots,\varphi^{t-1}(\beta)\notin{\rm
Fix}(\eta)$, therefore by \ref{boxplusboxplus}, we have
$\mathfrak{v}_\beta,\mathfrak{v}_{\varphi(\beta)},\ldots,\mathfrak{v}_{\varphi^{t-2}(\beta)}=0$,
so $\beta,\varphi(\beta),\ldots,\varphi^{t-2}(\beta)\in
Z\setminus{\rm Fix}(\eta)$, therefore ${\rm
card}(\Gamma\setminus\eta(\rm{supp}(\mathfrak{v})))\geq t-1$, for each
$t\geq2$ and $\Gamma\setminus\eta(\rm{supp}(\mathfrak{v}))$ is
infinite, hence
$\sigma_{\varphi,\mathfrak{w}}^N=\sigma_{\eta,\mathfrak{v}}:F^\Gamma\to
F^\Gamma$ is not finite fibre which leads to the desired  result.
\end{proof}
\begin{corollary}\label{badr4000}
Suppose $\sigma_{\varphi}:F^\Gamma\to F^\Gamma$ is of finite fibre,
$\ecset(\sigma_{\varphi})=+\infty$ and $\eset(\sigma_{\varphi,\mathfrak{w}})=0$,
then $\sigma_{\varphi,\mathfrak{w}}:F^\Gamma\to F^\Gamma$ is not finite fibre.
\end{corollary}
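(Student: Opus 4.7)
The plan is to derive the conclusion from Corollary~\ref{badr6000} by upgrading the covariant information: I will show that the given hypothesis $\ecset(\sigma_\varphi) = +\infty$, together with $\sigma_\varphi$ being finite fibre, forces $\eset(\sigma_\varphi) = +\infty$. Once this is in hand, the hypothesis $\eset(\sigma_{\varphi,\mathfrak{w}}) = 0$ and Corollary~\ref{badr6000} immediately yield that $\sigma_{\varphi,\mathfrak{w}}$ is not finite fibre.

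For the upgrade, I will apply Theorem~\ref{badr3000} with the constant weight $\mathfrak{u} = (1)_{\alpha\in\Gamma}$, for which $\mathfrak{Z} = \varnothing$ and hence $\Lambda = \Gamma$. The theorem applies because $\sigma_\varphi = \sigma_{\varphi,\mathfrak{u}}$ is finite fibre by hypothesis. The assumption $\ecset(\sigma_\varphi) = +\infty$ then falls into one of the two characterizing cases: either $\varphi$ has a non--quasi--periodic point in $\Gamma$, or ${\rm Per}(\varphi) \neq \varnothing$ with $\sup\{{\rm per}(\alpha) : \alpha \in {\rm Per}(\varphi)\} = +\infty$.

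In either case I will verify that $\sigma_\varphi$ is not quasi--periodic, so that Theorem~\ref{salam20} forces $\eset(\sigma_\varphi) = +\infty$ (the other member of the dichotomy). For the first case, a non--quasi--periodic point $\alpha$ has an infinite one--to--one orbit $\{\varphi^k(\alpha)\}_{k\geq 0}$, so $\varphi^n(\alpha) \neq \varphi^m(\alpha)$ for any $m > n \geq 1$, ruling out $\sigma_\varphi^n = \sigma_\varphi^m$. For the second case, given any candidate pair $m > n \geq 1$, unboundedness of periods lets me choose $\alpha \in {\rm Per}(\varphi)$ with ${\rm per}(\alpha) > m - n$, whence $\varphi^n(\alpha) \neq \varphi^m(\alpha)$ (otherwise ${\rm per}(\alpha) \mid (m-n)$, contradicting $0 < m-n < {\rm per}(\alpha)$), again ruling out $\sigma_\varphi^n = \sigma_\varphi^m$.

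Having established $\eset(\sigma_\varphi) = +\infty$, I invoke Corollary~\ref{badr6000} with the given $\mathfrak{w}$ (and $\eset(\sigma_{\varphi,\mathfrak{w}}) = 0$) to conclude that $\sigma_{\varphi,\mathfrak{w}}$ is not finite fibre. The whole argument is bookkeeping with the two dichotomy theorems, and the only mild subtlety lies in the second case, where one must use the divisibility relation between the period and $m-n$ to exclude $\sigma_\varphi^n = \sigma_\varphi^m$; there is no deeper obstacle.
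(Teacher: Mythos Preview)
Your proof is correct and follows essentially the same strategy as the paper: first upgrade $\ecset(\sigma_\varphi)=+\infty$ to $\eset(\sigma_\varphi)=+\infty$, then invoke Corollary~\ref{badr6000}. The paper does the upgrade more directly via Lemmas~\ref{salam10} and~\ref{badr27} (contrapositive of the latter gives $\sup\mathcal{T}=+\infty$ for the constant weight, which by the former yields $\eset(\sigma_\varphi)=+\infty$), whereas you take the slightly longer route through the characterization in Theorem~\ref{badr3000} and a case analysis; the content is the same.
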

\begin{proof}
By Lemma~\ref{salam10}, Theorem~\ref{ssalam10} and Corollary \ref{badr27}, if $\ecset(\sigma_{\varphi})=+\infty$, then
$\eset(\sigma_{\varphi})=+\infty$. Now Use corollary~\ref{badr6000}  to complete the proof.
\end{proof}
 According to Corollaries~\ref{badr6000} and
\ref{badr4000}, it is not possible to occur the following
properties simultaneously (note that if
$\sigma_{\varphi,\mathfrak{w}}:F^\Gamma\to F^\Gamma$ is of finite
fibre, then $\sigma_{\varphi}:F^\Gamma\to F^\Gamma$ is of finite
fibre too):
\begin{itemize}
\item $\sigma_{\varphi,\mathfrak{w}}:F^\Gamma\to F^\Gamma$ is of finite fibre,
\item $\ecset(\sigma_{\varphi})=+\infty$ or $\eset(\sigma_{\varphi})=+\infty$,
\item $\eset(\sigma_{\varphi,\mathfrak{w}})=0$.
\end{itemize}
However, the following example shows that the next conditions are possible to occur simultaneously:
\begin{itemize}
\item $\sigma_{\varphi}:F^\Gamma\to F^\Gamma$ is of finite fibre,
\item $\ecset(\sigma_{\varphi})=\eset(\sigma_{\varphi})=+\infty$,
\item $\eset(\sigma_{\varphi,\mathfrak{w}})=0$.
\end{itemize}
\begin{exam}\label{badr5000}
Suppose $\Gamma$ is infinite. Choose a one to one sequence
$\{\theta_n\}_{n\geq1}$ in $\Gamma$. Consider
$\varphi:\Gamma\to\Gamma$ with $\varphi(\theta_n)=\theta_{n+1}$
for $n\in\mathbb N$ and $\varphi(\alpha)=\alpha$ for
$\alpha\in\Gamma\setminus\{\theta_n:n\geq1\}$.  Also:
\[\mathfrak{w}_\alpha:=\left\{\begin{array}{lc} 0, & {\rm if \:} \alpha=\theta_{2n}, {\rm \: for \: some \:} n\in\mathbb{N}\:, \\ 1, & {\rm otherwise}\:,
\end{array}\right.\SP \mathfrak{w}=(\mathfrak{w}_\alpha)_{\alpha\in\Gamma}\:.\]
Then $\sigma_{\varphi}:F^\Gamma\to F^\Gamma$ is of finite fibre, $\sigma_{\varphi,\mathfrak{w}}:F^\Gamma\to F^\Gamma$ is not finite fibre,
$\eset(\sigma_{\varphi,\mathfrak{w}})=0$
and $\ecset(\sigma_\varphi)=\eset(\sigma_\varphi)=+\infty$.
\end{exam}
\begin{note}
If $\sigma_{\varphi,\mathfrak{w}}:F^\Gamma\to
F^\Gamma$ is of finite fibre, as it is clear by the following table, then
$\eset(\sigma_{\varphi,\mathfrak{w}})=\eset(\sigma_{\varphi})$.
However, this may fail to be valid for non--finite fibre
$\sigma_{\varphi,\mathfrak{w}}:F^\Gamma\to F^\Gamma$ by
Example~\ref{badr5000}.
\end{note}
\subsection*{A table}
 Now, let's use the following predictions for finite field $F$, infinite set $\Psi$, weight
vector $\mathfrak{v}\in F^\Psi$ and self--map
$\theta:\Psi\to\Psi$:
\begin{center}
\begin{tabular}{cc}
\begin{tabular}{l|}
$\pi_1(\sigma_{\theta,\mathfrak{v}},F^\Psi)$  is $\eset(\sigma_\theta)=0$, \\
$\pi_2(\sigma_{\theta,\mathfrak{v}},F^\Psi)$  is $\eset(\sigma_\theta)=+\infty$, \\
$\pi_3(\sigma_{\theta,\mathfrak{v}},F^\Psi)$  is $\eset(\sigma_{\theta,\mathfrak{v}})=0$, \\
$\pi_4(\sigma_{\theta,\mathfrak{v}},F^\Psi)$  is $\eset(\sigma_{\theta,\mathfrak{v}})=+\infty$,
\end{tabular} &
\begin{tabular}{l}
$\rho_1(\sigma_{\theta,\mathfrak{v}},F^\Psi)$  is $\ecset(\sigma_\theta)=0$, \\
$\rho_2(\sigma_{\theta,\mathfrak{v}},F^\Psi)$  is $\ecset(\sigma_\theta)=+\infty$, \\
$\rho_3(\sigma_{\theta,\mathfrak{v}},F^\Psi)$  is $\ecset(\sigma_{\theta,\mathfrak{v}})=0$, \\
$\rho_4(\sigma_{\theta,\mathfrak{v}},F^\Psi)$  is $\ecset(\sigma_{\theta,\mathfrak{v}})=+\infty$.
\end{tabular}
\\ & 
\end{tabular}
\end{center}
Then we have the following table:
\begin{center}
\includegraphics{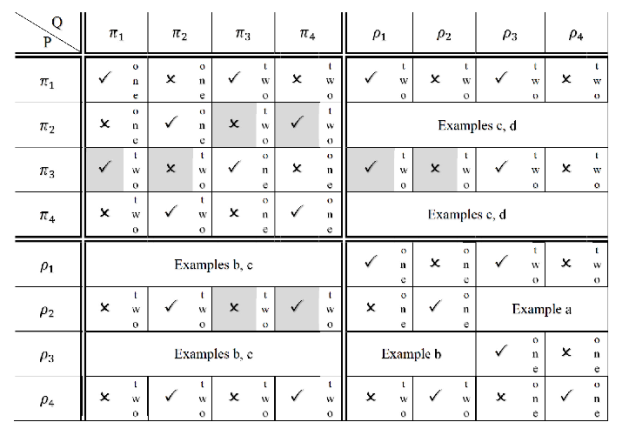}
\\
(Table A)
\end{center}
{\small
\underline{The mark ``$\surd$''} indicates that in the corresponding case for all finite fibre $\sigma_{\theta,\mathfrak{v}}:F^\Psi\to F^\Psi$ we have
``$P(\sigma_{\theta,\mathfrak{v}},F^\Psi)\Rightarrow Q(\sigma_{\theta,\mathfrak{v}},F^\Psi)$''.
\\ \\
\underline{The mark ``$\times$''}  indicates that  in the corresponding case for all finite fibre $\sigma_{\theta,\mathfrak{v}}:F^\Psi\to F^\Psi$ we have
``$P(\sigma_{\theta,\mathfrak{v}},F^\Psi)\Rightarrow\neg Q(\sigma_{\theta,\mathfrak{v}},F^\Psi)$''.
\\ \\
\underline{Vertical ``$\mathsf{xyz}(\in\{\mathsf{one}\:,\:\mathsf{two}\})$''} means that one may find proof of corresponding
``$\surd$'' (hence ``$\times$'') in Step ``$\mathsf{xyz}$'' below.
\\ \\
\underline{Example(s) p, q, ..., r}, means that in the corresponding case in item(s) p, q, ..., r of Counterexample~\ref{mas10}
one may find finite fibre weighted generalized shifts
$\sigma_{\lambda,\mathfrak{y}},\sigma_{\lambda',\mathfrak{y}'}:F^\Psi\to
F^\Psi$ such that ``$P(\sigma_{\lambda,\mathfrak{y}},F^\Psi)\wedge
Q(\sigma_{\lambda,\mathfrak{y}},F^\Psi)$'' and
``$P(\sigma_{\lambda',\mathfrak{y}'},F^\Psi)\wedge \neg
Q(\sigma_{\lambda',\mathfrak{y}'},F^\Psi)$''.
\\ \\
\underline{In gray boxes} if we substitute the assumption of finite fibreness of  $\sigma_{\theta,\mathfrak{v}}:F^\Psi\to F^\Psi$ 
just by finite fibreness of $\sigma_{\theta}:F^\Psi\to F^\Psi$, then the result in the corresponding case in the above table
may fail to be true according to  Example~\ref{badr5000}.
}
\subsection*{Proof of Statements in Table A}
Let's present proof through the following steps and examples:
\\
{\bf  Step one.} It's clear that for each proposition ``$\alpha$'' we have ``$\alpha\Rightarrow\alpha$''. Moreover
\begin{eqnarray*}
\pi_1(\sigma_{\theta,\mathfrak{v}},F^\Psi)\equiv\neg\pi_2(\sigma_{\theta,\mathfrak{v}},F^\Psi)
&\:,\:&\pi_3(\sigma_{\theta,\mathfrak{v}},F^\Psi)\equiv\neg\pi_4(\sigma_{\theta,\mathfrak{v}},F^\Psi)\:,
\\
\rho_1(\sigma_{\theta,\mathfrak{v}},F^\Psi)\equiv\neg\rho_2(\sigma_{\theta,\mathfrak{v}},F^\Psi)&\:,\:&
\rho_3(\sigma_{\theta,\mathfrak{v}},F^\Psi)\equiv\neg\rho_4(\sigma_{\theta,\mathfrak{v}},F^\Psi)\:.
\end{eqnarray*}
{\bf  Step two.}
We have the following implications  for finite fibre
$\sigma_{\theta,\mathfrak{v}}:F^\Psi\to F^\Psi$:
\begin{description}
\item[i:]
	``$\pi_1(\sigma_{\theta,\mathfrak{v}},F^\Psi)\Rightarrow\pi_3(\sigma_{\theta,\mathfrak{v}},F^\Psi)$''.
	Suppose $\eset(\sigma_\theta)=0$, then by Theorem~\ref{ssalam10}
	(or~\cite[Theorem 2.4]{nili}), $\sigma_\theta$ and in its consequence  $\theta$ is quasi--periodic. By
	Lemma~\ref{salam10}, $\sigma_{\theta,\mathfrak{v}}$ is
	quasi--periodic too and by Theorem~\ref{ssalam10},
	$\eset(\sigma_{\theta,\mathfrak{v}})=0$.
\item[ii:] ``$\pi_3(\sigma_{\theta,\mathfrak{v}},F^\Psi)\Rightarrow\rho_3(\sigma_{\theta,\mathfrak{v}},F^\Psi)$''.
	Use Lemma~\ref{salam10}, Theorem~\ref{ssalam10} and Corollary \ref{badr27}.
\item[iii:] ``$\pi_1(\sigma_{\theta,\mathfrak{v}},F^\Psi)\Rightarrow\rho_1(\sigma_{\theta,\mathfrak{v}},F^\Psi)$''.
	If $\sigma_{\theta,\mathfrak{v}}$ is of finite fibre, then $\sigma_\theta= \sigma_{\theta,\mathfrak{u}}$ is of finite fibre too,
	hence by item (ii) we have ``$\pi_3(\sigma_{\theta,\mathfrak{u}},F^\Psi)\Rightarrow\rho_3(\sigma_{\theta,		
	\mathfrak{u}},F^\Psi)$'', i.e. ``$\pi_1(\sigma_{\theta,\mathfrak{v}},F^\Psi)\Rightarrow\rho_1(\sigma_{\theta,		
	\mathfrak{v}},F^\Psi)$''.
\item[iv:] ``$\pi_1(\sigma_{\theta,\mathfrak{v}},F^\Psi)\Rightarrow\rho_3(\sigma_{\theta,\mathfrak{v}},F^\Psi)$''. 
	Use items (i) and (ii).
\item[v:] ``$\pi_4(\sigma_{\theta,\mathfrak{v}},F^\Psi)\Rightarrow\pi_2(\sigma_{\theta,\mathfrak{v}},F^\Psi)$''.
	Use (i) and Step one.
\item[vi:]  ``$\rho_4(\sigma_{\theta,\mathfrak{v}},F^\Psi)\Rightarrow\pi_4(\sigma_{\theta,\mathfrak{v}},F^\Psi)$''.
	Use (ii) and Step one.
\item[vii:]  ``$\rho_2(\sigma_{\theta,\mathfrak{v}},F^\Psi)\Rightarrow\pi_2(\sigma_{\theta,\mathfrak{v}},F^\Psi)$''.
	Use (iii) and Step one.
\item[viii:]  ``$\rho_4(\sigma_{\theta,\mathfrak{v}},F^\Psi)\Rightarrow\pi_2(\sigma_{\theta,\mathfrak{v}},F^\Psi)$''.
	Use (iv) and Step one.
\item[ix:] ``$\rho_1(\sigma_{\theta,\mathfrak{v}},F^\Psi)\Rightarrow\rho_3(\sigma_{\theta,\mathfrak{v}},F^\Psi)$''.
	Suppose $\ecset(\sigma_\theta)=0$, then by Theorem~\ref{badr3000}
	(or~\cite[Corollary 3.9]{nili}),  all points of $\Psi$ are
	quasi--periodic w.r.t.  $\theta$ and $\sup\{{\rm
	per}(\alpha):\alpha\in{\rm Per}(\theta)\} \in\mathbb{N}$. In
	particular, all points of $L:=\Gamma\setminus \{\alpha\in\Psi:\exists
	n\geq0\:\mathfrak{v}_{\theta^n(\alpha)}=0\}$ are quasi--periodic
	w.r.t.  $\theta$ and $\sup(\{{\rm per}(\alpha):\alpha\in{\rm
	Per}(\theta)\cap L\}\cup\{1\}) \in\mathbb{N}$. Again by
	Theorem~\ref{badr3000}, $\ecset(\sigma_{\theta,\mathfrak{v}})=0$.
\item[x:] ``$\rho_4(\sigma_{\theta,\mathfrak{v}},F^\Psi)\Rightarrow\rho_2(\sigma_{\theta,\mathfrak{v}},F^\Psi)$''.
	Use (ix) and Step one.
\item[xi:]  ``$\pi_3(\sigma_{\theta,\mathfrak{v}},F^\Psi)\Rightarrow\pi_1(\sigma_{\theta,\mathfrak{v}},F^\Psi)$''.
	Use Corollary~\ref{badr6000}.
\item[xii:] ``$\pi_2(\sigma_{\theta,\mathfrak{v}},F^\Psi)\Rightarrow\pi_4(\sigma_{\theta,\mathfrak{v}},F^\Psi)$''.
	Use (xi) and Step one.
\item[xiii:] ``$\rho_2(\sigma_{\theta,\mathfrak{v}},F^\Psi)\Rightarrow\pi_4(\sigma_{\theta,\mathfrak{v}},F^\Psi)$''.
	Use Corollary~\ref{badr4000}.
\item[xiv:]  ``$\pi_3(\sigma_{\theta,\mathfrak{v}},F^\Psi)\Rightarrow\rho_1(\sigma_{\theta,\mathfrak{v}},F^\Psi)$''.
	Use (xiii) and Step one.
\end{description}
\begin{exam}\label{mas10}
Suppose $\mathfrak{u}:=(1)_{\alpha\in\Psi}$ and consider a one to
one double sequence $\{\beta_{(n,m)}\}_{n,m\geq1}$ in $\Psi$.
Consider:
\begin{itemize}
\item $\theta_1:\Psi\to\Psi$ with:
    \[\theta_1(\alpha):=\left\{\begin{array}{lc}
    \beta_{(1,n+1)}, & {\rm if \:}\alpha=\beta_{(1,n)}, {\rm \: for \: some \:} 2^k\leq n\leq 2^{k+1}-1 {\rm \: and \:} k\geq1 \: , \\
    \beta_{(1,2^k)}, & {\rm if \:} \alpha=\beta_{(1,2^{k+1}-1)}, {\rm \: for \: some \:} k\geq1 \: , \\
    \beta_{(k,n)}, & {\rm if \:} \alpha=\beta_{(k+1,n)},{\rm \: for \: some \:} n,k\geq1 \: , \\
    \alpha, &otherwise\:, \end{array} \right.\]
    and:
    \[\mathfrak{v}_\alpha=\left\{\begin{array}{lc} 0, & {\rm if \:} \alpha=\beta_{(1,n)}, {\rm \: for \: some \:} n\geq1\:, \\ 1, & {\rm otherwise}\:,
    \end{array}\right.\SP ,\SP \mathfrak{v}=(\mathfrak{v}_\alpha)_{\alpha\in\Psi}\:.\]
    Then $\theta_1(\Psi\setminus\{\alpha\in\Psi:\mathfrak{v}_\alpha=0\})=\theta_1(\Psi)=\Psi$ and
    $\sigma_{\theta_1,\mathfrak{v}}:F^\Psi\to F^\Psi$ is of finite fibre.
\item $\theta_2:\Psi\to\Psi$ with $\theta_2(\beta_{(1,m+1)})=\beta_{(1,m)}$ for $m\geq1$,
    $\theta_2(\alpha)=\alpha$, otherwise.
\item $\theta_3:\Psi\to\Psi$ with $\theta_3(\beta_{(1,2m-1)})=\beta_{(1,2m+1)}$ and
    $\theta_3(\beta_{(1,2m+2)})=\beta_{(1,2m)}$ for $m\geq1$, also $\theta_3(\beta_{(1,2)})=\beta_{(1,1)}$ and
    $\theta_3(\alpha)=\alpha$, otherwise.
\end{itemize}
Then
\begin{itemize}
\item[a.] $\ecset(\sigma_{\theta_1})=+\infty$, however $\ecset(\sigma_{\theta_1,\mathfrak{u}})=+\infty$ and $\ecset(\sigma_{\theta_1,\mathfrak{v}})=0$
\item[b.] $\ecset(\sigma_{id_\Psi,\mathfrak{u}})=\ecset(\sigma_{id_\Psi})=\eset(\sigma_{id_\Psi,\mathfrak{u}})=\eset(\sigma_{id_\Psi})=0$, also
$\ecset(\sigma_{\theta_1,\mathfrak{v}})=0$ and $\ecset(\sigma_{\theta_1})=+\infty$
\item[c.]  $\eset(\sigma_{\theta_2})=\eset(\sigma_{\theta_2,\mathfrak{u}})=+\infty$ and $\ecset(\sigma_{\theta_2})=\ecset(\sigma_{\theta_2,\mathfrak{u}})=0$
\item[d.]  $\eset(\sigma_{\theta_3})=\ecset(\sigma_{\theta_3})=\ecset(\sigma_{\theta_3,\mathfrak{u}})=+\infty$
\end{itemize}
\end{exam}
\section{Set--theoretical entropy of $\sigma_{\varphi,\mathfrak{w}}\restriction_{\mathop{\bigoplus}\limits_{\Gamma}F}$ whenever
\\
$\mathop{\bigoplus}\limits_{\Gamma}F$ is $\sigma_{\varphi,\mathfrak{w}}-$invariant}\label{maryam40}
\noindent In this section we pay attention to the restriction of $\sigma_{\varphi,\mathfrak{w}}$ to direct sum 
$\mathop{\bigoplus}\limits_{\Gamma}F:=\{x\in F^\Gamma:{\rm supp}(x)$ is finite$\}$. 
We try to find out all conditions under which $\sigma_{\varphi,\mathfrak{w}}(\mathop{\bigoplus}\limits_{\Gamma}F)\subseteq \mathop{\bigoplus}\limits_{\Gamma}F$ and in the above case we show 
$\eset(\sigma_{\varphi,\mathfrak{w}}\restriction_{\mathop{\bigoplus}\limits_{\Gamma}F})\in\{0,+\infty\}$ where
$\eset(\sigma_{\varphi,\mathfrak{w}}\restriction_{\mathop{\bigoplus}\limits_{\Gamma}F})=+\infty$ if and only if there exists
a $\varphi-$anti--orbit one to one sequence in ${\rm supp}(\mathfrak{w})$.
\\
In this section for each $\beta\in\Gamma$ let
$\delta_{\beta,\beta}=1$ and $\delta_{\alpha,\beta}=0$ for $\alpha\neq\beta$
also
\[{\mathsf e}_\beta:=(\delta_{\alpha,\beta})_{\alpha\in\Gamma}\:.\]
\begin{lemma}\label{kheybar30}
The following statements are equivalent:
\begin{itemize}
\item[1.] $\sigma_{\varphi,\mathfrak{w}}(\mathop{\bigoplus}\limits_{\Gamma}F)\subseteq \mathop{\bigoplus}\limits_{\Gamma}F$,
\item[2.] for each $\beta\in\Gamma$, $\sigma_{\varphi,\mathfrak{w}}({\mathsf e}_\beta)\in \mathop{\bigoplus}\limits_{\Gamma}F$,
\item[3.] $\varphi\restriction_{{\rm supp}(\mathfrak{w})}:{\rm supp}(\mathfrak{w})\to\Gamma$ is of finite fibre.
\end{itemize}
\end{lemma}
\begin{proof}
(1 $\Leftrightarrow$ 2): It is obvious, since $\sigma_{\varphi,\mathfrak{w}}:F^\Gamma\to F^\Gamma$ is a linear map and
$\mathop{\bigoplus}\limits_{\Gamma}F$ is the linear subspace (of $F^\Gamma$) generated by $\{{\mathsf e}_\alpha:\alpha\in\Gamma\}$.
\\
(2 $\Leftrightarrow$ 3): For each $\beta\in \Gamma$ we have
\begin{eqnarray*}
{\rm supp}(\sigma_{\varphi,\mathfrak{w}}({\mathsf e}_\beta)) & = & 
	{\rm supp}((\mathfrak{w}_\alpha\delta_{\varphi(\alpha),\beta})_{\alpha\in\Gamma})
	=\{\alpha\in\Gamma:\mathfrak{w}_\alpha\delta_{\varphi(\alpha),\beta}\neq0\} \\
& = & \{\alpha\in{\rm supp}(\mathfrak{w}):\delta_{\varphi(\alpha),\beta}\neq0\}
	=\{\alpha\in{\rm supp}(\mathfrak{w}):\varphi(\alpha)=\beta\} \\
& = & {\rm supp}(\mathfrak{w})\cap\varphi^{-1}(\beta)=\varphi\restriction_{{\rm supp}(\mathfrak{w})}^{-1}(\beta) 
\end{eqnarray*}
hence $\sigma_{\varphi,\mathfrak{w}}({\mathsf e}_\beta)\in \mathop{\bigoplus}\limits_{\Gamma}F$ if and only if
$\varphi\restriction_{{\rm supp}(\mathfrak{w})}^{-1}(\beta) $ is finite, which leads to the desired result.
\end{proof}
\begin{convention}\label{kheybar75}
Henceforth, suppose $\sigma_{\varphi,\mathfrak{w}}(\mathop{\bigoplus}\limits_{\Gamma}F)\subseteq \mathop{\bigoplus}\limits_{\Gamma}F$
(or equivalently, by Lemma~\ref{kheybar30},  $\varphi\restriction_{{\rm supp}(\mathfrak{w})}:{\rm supp}(\mathfrak{w})\to\Gamma$ is of finite fibre).
\end{convention}
\begin{lemma}\label{kheybar60}
If $\beta\in\Gamma$ and $\{\sigma_{\varphi,\mathfrak{w}}^n(\mathsf{e}_\beta)\}_{n\geq1}$ is a one to one sequence, then 
\begin{itemize}
\item[1.] ${\rm supp}(\mathfrak{w})\cap\varphi^{-1}(\beta)\setminus\{\beta\}\neq\varnothing$ and
	$\sigma_{\varphi,\mathfrak{w}}(\mathsf{e}_\beta)=\mathop{\Sigma}\limits_{\kappa\in {\rm supp}(\mathfrak{w})\cap\varphi^{-1}(\beta)}\mathfrak{w}_\kappa\mathsf{e}_\kappa$,
\item[2.] there 
exists $\mu\in{\rm supp}(\mathfrak{w})\cap\varphi^{-1}(\beta)\setminus\{\beta\}$ 
such that $\{\sigma_{\varphi,\mathfrak{w}}^n(\mathsf{e}_\mu)\}_{n\geq1}$ is a one to one sequence,
\item[3.] there exists an infinite $\varphi-$anti--orbit $\{\alpha_n\}_{n\geq1}$ in ${\rm supp}(\mathfrak{w})$.
\end{itemize}
\end{lemma}
\begin{proof}
Suppose $\{\sigma_{\varphi,\mathfrak{w}}^n(\mathsf{e}_\beta)\}_{n\geq1}$ is a one to one sequence. 
\\
{\bf 1)} If $\sigma_{\varphi,\mathfrak{w}}(\mathsf{e}_\beta)=(0)_{\alpha\in\Gamma}$, then 
$\sigma_{\varphi,\mathfrak{w}}^n(\mathsf{e}_\beta)=(0)_{\alpha\in\Gamma}$ for all $n\geq1$, which is a contradiction.
Thus $\sigma_{\varphi,\mathfrak{w}}(\mathsf{e}_\beta)\neq(0)_{\alpha\in\Gamma}$ and
\[(0)_{\alpha\in\Gamma}\neq\sigma_{\varphi,\mathfrak{w}}(\mathsf{e}_\beta)=
	\sigma_{\varphi,\mathfrak{w}}((\delta_{\alpha,\beta})_{\alpha\in\Gamma})=
	(\mathfrak{w}_\alpha \delta_{{\varphi(\alpha)},\beta})_{\alpha\in\Gamma}=
	\mathop{\Sigma}\limits_{\kappa\in {\rm supp}(\mathfrak{w})\cap\varphi^{-1}(\beta)}\mathfrak{w}_\kappa\mathsf{e}_\kappa\]
therefore ${\rm supp}(\mathfrak{w})\cap\varphi^{-1}(\beta)\neq\varnothing$. 
\\
If ${\rm supp}(\mathfrak{w})\cap\varphi^{-1}(\beta)=\{\beta\}$,
then $\sigma_{\varphi,\mathfrak{w}}(\mathsf{e}_\beta)=\mathop{\Sigma}\limits_{\kappa\in {\rm supp}(\mathfrak{w})\cap\varphi^{-1}(\beta)}\mathfrak{w}_\kappa\mathsf{e}_\kappa=\mathfrak{w}_\beta \mathsf{e}_\beta$, which leads to (by induction on $n\geq1$)
\[\forall n\geq1\SP \sigma_{\varphi,\mathfrak{w}}^n(\mathsf{e}_\beta)=\mathfrak{w}_\beta^n \mathsf{e}_\beta\:,\]
thus $\{\sigma_{\varphi,\mathfrak{w}}^n(\mathsf{e}_\beta):n\geq1\}$ is an infinite subset of finite set
$\{r\mathsf{e}_\beta:r\in F\}$ which is a contradiction, therefore ${\rm supp}(\mathfrak{w})\cap\varphi^{-1}(\beta)\neq\{\beta\}$.
\\
{\bf 2)} Suppose ${\rm supp}(\mathfrak{w})\cap\varphi^{-1}(\beta)
=\{\kappa_1,\ldots,\kappa_p\}$ has $p$ elements. 
Then
\[\{\sigma_{\varphi,\mathfrak{w}}^n(\mathsf{e}_\beta):n\geq1\}
=\{\mathfrak{w}_{\kappa_1}\sigma_{\varphi,\mathfrak{w}}^n(\mathsf{e}_{\kappa_1})+\cdots+
\mathfrak{w}_{\kappa_p}\sigma_{\varphi,\mathfrak{w}}^n(\mathsf{e}_{\kappa_p}):n\geq0\}\]
is an infinite subset of 
$\{\mathfrak{w}_{\kappa_1}\sigma_{\varphi,\mathfrak{w}}^{n_1}(\mathsf{e}_{\kappa_1})+\cdots+
\mathfrak{w}_{\kappa_p}\sigma_{\varphi,\mathfrak{w}}^{n_p}(\mathsf{e}_{\kappa_p}):n_1,\ldots,n_p\geq0\}$.
So $\{\mathfrak{w}_{\kappa_1}\sigma_{\varphi,\mathfrak{w}}^{n_1}(\mathsf{e}_{\kappa_1})+\cdots+
\mathfrak{w}_{\kappa_p}\sigma_{\varphi,\mathfrak{w}}^{n_p}(\mathsf{e}_{\kappa_p}):n_1,\ldots,n_p\geq0\}$ is infinite too and
there exists $\theta\in\{\kappa_1,\ldots,\kappa_p\}$ such that
$\{\sigma_{\varphi,\mathfrak{w}}^{n}(\mathsf{e}_{\theta}):n\geq0\}$ is an infinite set. 
\\
Suppose for each
$\mu\in {\rm supp}(\mathfrak{w})\cap\varphi^{-1}(\beta)$ either $\mu=\beta$ or 
$\{\sigma_{\varphi,\mathfrak{w}}^{n}(\mathsf{e}_{\mu}):n\geq0\}$  is finite, then by the above discussion $\theta=\beta$
and (by item (1)) $p\geq2$. So we may suppose $\theta=\beta=\kappa_1$, moreover,
$D=\{\mathsf{e}_\beta\}\cup\{\sigma_{\varphi,\mathfrak{w}}^{n}(\mathsf{e}_{\kappa_i}):2\leq i\leq p, n\geq0\}$
is a finite subset of $\mathop{\bigoplus}\limits_\Gamma F$ and $V$ is the linear subspace generated by $D$, i.e.
\linebreak
$V:=\{r_1x_1+\cdots+r_m x_m:m\geq1, x_1,\ldots,x_m\in D,r_1,\ldots,r_m\in F\}$.
$V$ is a linear space with finite generator $D$ over finite field $F$, thus $V$ is finite.
Using induction on $n\geq1$ we prove $\sigma_{\varphi,\mathfrak{w}}^n(\mathsf{e}_{\beta})$ belongs to $V$.
Note that
\[\sigma_{\varphi,\mathfrak{w}}(\mathsf{e}_{\beta})\mathop{=}\limits^{(1)}\mathfrak{w}_{\kappa_1}\mathsf{e}_{\kappa_1}+\cdots+
\mathfrak{w}_{\kappa_p}\mathsf{e}_{\kappa_p}\mathop{=}\limits^{\kappa_1=\beta}\mathfrak{w}_{\beta}\mathsf{e}_{\beta}+
\cdots+\mathfrak{w}_{\kappa_p}\mathsf{e}_{\kappa_p}\in V\:.\]
For $t\geq1$ if 
$\sigma_{\varphi,\mathfrak{w}}^t(\mathsf{e}_{\beta})\in V$, then 
there exists $r_0,r_1,\ldots,r_m\in F$, 
\linebreak
$\lambda_1,\ldots,\lambda_m\in\{\kappa_2,\ldots,\kappa_p\}$ and
$n_1,\ldots, n_m\geq0$ such that
\[\sigma_{\varphi,\mathfrak{w}}^t(\mathsf{e}_{\beta})=r_0\mathsf{e}_{\beta}+
r_1\sigma_{\varphi,\mathfrak{w}}^{n_1}(\mathsf{e}_{\lambda_1})+\cdots+
r_m\sigma_{\varphi,\mathfrak{w}}^{n_m}(\mathsf{e}_{\lambda_m})\:,\]
then
using $\sigma_{\varphi,\mathfrak{w}}(\mathsf{e}_{\beta}),
	\sigma_{\varphi,\mathfrak{w}}^{n_1+1}(\mathsf{e}_{\lambda_1}),\ldots,
	\sigma_{\varphi,\mathfrak{w}}^{n_m+1}(\mathsf{e}_{\lambda_m}) \in V$ we have:
\begin{eqnarray*}
\sigma_{\varphi,\mathfrak{w}}^{t+1}(\mathsf{e}_{\beta}) & = & \sigma_{\varphi,\mathfrak{w}}(r_0\mathsf{e}_{\beta}+
	r_1\sigma_{\varphi,\mathfrak{w}}^{n_1}(\mathsf{e}_{\lambda_1})+\cdots+
	r_m\sigma_{\varphi,\mathfrak{w}}^{n_m}(\mathsf{e}_{\lambda_m}))  \\
& = & r_0\sigma_{\varphi,\mathfrak{w}}(\mathsf{e}_{\beta})+
	r_1\sigma_{\varphi,\mathfrak{w}}^{n_1+1}(\mathsf{e}_{\lambda_1})+\cdots+
	r_m\sigma_{\varphi,\mathfrak{w}}^{n_m+1}(\mathsf{e}_{\lambda_m}) \in V
\end{eqnarray*}
which completes the steps of induction. 
$\{\sigma_{\varphi,\mathfrak{w}}^n(\mathsf{e}_{\beta}):n\geq1\}$ is an infinite subset of finite set $V$ which is a contradiction.
\\
Therefore there exists $\mu\in {\rm supp}(\mathfrak{w})\cap\varphi^{-1}(\beta)\setminus\{\beta\}$ such that
$\{\sigma_{\varphi,\mathfrak{w}}^{n}(\mathsf{e}_{\mu}):n\geq0\}$  is infinite, hence 
$\{\sigma_{\varphi,\mathfrak{w}}^{n}(\mathsf{e}_{\mu})\}_{n\geq1}$ is a one to one sequence.
\\
{\bf 3)} We have the following cases:
\\
{\it Case 1.} $\beta\notin{\rm Per}(\varphi)$. Choose $\{\alpha_n\}_{n\geq1}$ inductively in the following way:
\\
$\bullet$ by item (1) choose $\alpha_1\in{\rm supp}(\mathfrak{w})\cap\varphi^{-1}(\beta)$ such that $\{\sigma_{\varphi,\mathfrak{w}}^n(\mathsf{e}_{\alpha_1})\}_{n\geq1}$ is a one to one sequence,
\\
$\bullet$ for $k\geq1$ suppose $\alpha_1,\ldots,\alpha_k\in{\rm supp}(\mathfrak{w})$ have been chosen such that
$\varphi(\alpha_i)=\alpha_{i-1}$ for $1<i\leq k$ and 
$\{\sigma_{\varphi,\mathfrak{w}}^{n}(\mathsf{e}_{\alpha_k})\}_{n\geq1}$ is a one to one sequence.
By item (1) choose $\alpha_{k+1}\in{\rm supp}(\mathfrak{w})\cap\varphi^{-1}(\alpha_k)$ such that $\{\sigma_{\varphi,\mathfrak{w}}^n(\mathsf{e}_{\alpha_{k+1}})\}_{n\geq1}$ is a one to one sequence.
\\
 Using the above inductive construction, $\{\alpha_n\}_{n\geq1}$ is a $\varphi-$anti--orbit sequence in
 ${\rm supp}(\mathfrak{w})$ with  $\varphi(\alpha_1)=\beta$. We claim that $\{\alpha_n\}_{n\geq1}$ is a one to one sequence too. 
 Consider $n>m\geq1$ such that $\alpha_n=\alpha_m$, then $\beta=\varphi^n(\alpha_n)=\varphi^n(\alpha_m)=\varphi^{n-m}(\varphi^m(\alpha_m))
 =\varphi^{n-m}(\beta)$, which is in contradiction with $\beta\notin{\rm Per}(\varphi)$. Therefore
 $\{\alpha_n\}_{n\geq1}$ is a one to one $\varphi-$anti--orbit sequence in
 ${\rm supp}(\mathfrak{w})$.
 \\
{\it Case 2.} $\beta\in {\rm Per}(\varphi)$ with ${\rm per}(\beta)=t\geq1$.
$\{\sigma_{\varphi,\mathfrak{w}}^n(\mathsf{e}_\beta)\}_{n\geq1}$ is a one to one sequence so
$\{\sigma_{\varphi,\mathfrak{w}}^{nt}(\mathsf{e}_\beta)\}_{n\geq1}=\{(\sigma_{\varphi,\mathfrak{w}}^t)^n(\mathsf{e}_\beta)\}_{n\geq1}$ 
is a one to one sequence too.
Let $\eta=\varphi^t$ and $\mathfrak{v}=(\mathfrak{v}_\alpha)_{\alpha\in\Gamma}:=\mathfrak{w}\sigma_\varphi(\mathfrak{w})\cdots
\sigma_{\varphi^{t-1}}(\mathfrak{w})$, by Lemma~\ref{kheybar50} we have $\sigma_{\varphi,\mathfrak{w}}^t=\sigma_{\eta,\mathfrak{v}}$.
Thus $\{\sigma_{\eta,\mathfrak{v}}^n(\mathsf{e}_\beta)\}_{n\geq1}$ is a one to one sequence. By item (2) there exists
$\mu\in{\rm supp}(\mathfrak{v})\cap\eta^{-1}(\beta)\setminus\{\beta\}$ such that 
$\{\sigma_{\eta,\mathfrak{v}}^n(\mathsf{e}_\mu)\}_{n\geq1}$ is a one to one sequence.
For each $n\geq1$ we have $\eta^n(\mu)=\eta^{n-1}(\eta(\mu))=\eta^{n-1}(\beta)=\varphi^{t(n-1)}(\beta)=\beta\neq\mu$ hence
$\mu\notin{\rm Per}(\eta)$. By Case 1 there exists a one to one $\eta-$anti--orbit sequence $\{\mu_n\}_{n\geq1}$ in
${\rm supp}(\mathfrak{v})$. Let:
\[\begin{array}{llcl}
\alpha_1:=\mu_1=\varphi^t(\mu_2), & \alpha_2:=\varphi^{t-1}(\mu_2),&\cdots,& \alpha_t:=\varphi(\mu_2) ,\\
\alpha_{t+1}:=\mu_2=\varphi^t(\mu_3), & \alpha_{t+2}:=\varphi^{t-1}(\mu_3),&\cdots,& \alpha_{2t}:=\varphi(\mu_3), \\
\vdots & & & 
\end{array}\]
i.e.,  $\alpha_{it+j}=\varphi^{t-j+1}(\mu_{i+2})$ for each $i\geq0$ and $j\in\{1,\ldots,t\}$.
Clearly $\{\alpha_n\}_{n\geq1}$ is a $\varphi-$anti--orbit sequence with infinite sub--sequence $\{\mu_n\}_{n\geq1}$.
Hence $\{\alpha_n\}_{n\geq1}$ is an infinite  $\varphi-$anti--orbit sequence, therefore it is
a one to one $\varphi-$anti--orbit sequence. 
\\
Also for each $\mu\in{\rm supp}(\mathfrak{v})$  we have $0\neq\mathfrak{v}_\mu=\mathfrak{w}_\mu\mathfrak{w}_{\varphi(\mu)}\cdots
\mathfrak{w}_{\varphi^{t-1}(\mu)}$, therefore 
\linebreak
$\mathfrak{w}_\mu\neq0,\mathfrak{w}_{\varphi(\mu)}\neq0,\ldots,
\mathfrak{w}_{\varphi^{t-1}(\mu)}\neq0$, hence $\mu,\varphi(\mu),\ldots,\varphi^{t-1}(\mu)\in{\rm supp}(\varphi)$,
so $\alpha_n\in{\rm supp}(\mathfrak{w})$ for each $n\geq1$. Hence
$\{\alpha_n\}_{n\geq1}$ is a one to one  $\varphi-$anti--orbit sequence in ${\rm supp}(\mathfrak{w})$.  
\end{proof}
\begin{theorem}\label{kheybar70}
The following statements are equivalent:
\begin{itemize}
\item[1.] there exists an infinite $\varphi-$anti--orbit $\{\alpha_n\}_{n\geq1}$ in ${\rm supp}(\mathfrak{w})$,
\item[2.] $\eset(\sigma_{\varphi,\mathfrak{w}}\restriction_{\mathop{\bigoplus}\limits_{\Gamma}F})=+\infty$,
\item[3.] $\eset(\sigma_{\varphi,\mathfrak{w}}\restriction_{\mathop{\bigoplus}\limits_{\Gamma}F})>0$,
\item[4.] there exists $\beta\in\Gamma$ such that $\{\sigma_{\varphi,\mathfrak{w}}^n(\mathsf{e}_\beta)\}_{n\geq1}$ is a
	one to one sequence.
\end{itemize}
$\:$\\
So:
{\small \[\eset(\sigma_{\varphi,\mathfrak{w}}\restriction_{\mathop{\bigoplus}\limits_{\Gamma}F})=\left\{\begin{array}{lc}
+\infty, & if \: there \: exists \: a \: one \: to \: one \: \varphi\!-\!anti-orbit \: sequence \: in \: {\rm supp}(\mathfrak{w}), \\
0\:, & otherwise\:.
\end{array}\right.\]}
$\;$
\end{theorem}
\begin{proof}
(1 $\Rightarrow$ 2): Suppose $\{\alpha_n\}_{n\geq1}$ is an infinite $\varphi-$anti--orbit  in ${\rm supp}(\mathfrak{w})$.
For $m\geq1$ let:
\[x^m_\alpha:=\left\{\begin{array}{lc} 1, & \alpha=\alpha_1, \alpha_{m+2} \:, \\ 0, & {\rm otherwise}\:,
\end{array}\right.\SP
{\rm and}\SP x^m:=\mathsf{e}_{\alpha_1}+\mathsf{e}_{\alpha_{m+2}}=(x^m_\alpha)_{\alpha\in\Gamma}.\] 
For
convenience let
\[\sigma_{\varphi,\mathfrak{w}}^i(x^j)=(y^{j,i}_\alpha)_{\alpha\in\Gamma}\SP(i,j\geq1)\:.\]
Therefore for each $i,j\geq1$ we have $(y^{j,i+1}_\alpha)_{\alpha\in\Gamma}=\sigma_{\varphi,\mathfrak{w}}((y^{j,i}_\alpha)_{\alpha\in\Gamma})=(\mathfrak{w}_\alpha y^{j,i}_{\varphi(\alpha)})_{\alpha\in\Gamma})$, in particular
\[\forall i,j,k\geq1 \SP y^{j,i+1}_{\alpha_{k+1}}=\mathfrak{w}_{\alpha_{k+1}} y^{j,i}_{\varphi(\alpha_{k+1})}
=\mathfrak{w}_{\alpha_{k+1}} y^{j,i}_{\alpha_k}\:.\]
Hence for $m\geq1$ we have:
{\small\[\begin{array}{lcl}
(x^{m}_{\alpha_k})_{k\geq1} & = & (1, \underbrace{0,\cdots,0}_{m\: times},1,0,0,\cdots) \\ && \\
(y^{m,1}_{\alpha_k})_{k\geq1} & = & (y^{m,1}_{\alpha_1},\mathfrak{w}_{\alpha_2},\underbrace{0,\cdots,0}_{m\: times},\mathfrak{w}_{\alpha_{m+3}},0,0,\cdots) \\
& \vdots & \\
(y^{m,n}_{\alpha_k})_{k\geq1} & = & (y^{m,n}_{\alpha_1},\cdots, y^{m,n}_{\alpha_n}, \underbrace{\mathfrak{w}_{\alpha_2}\mathfrak{w}_{\alpha_3}\cdots\mathfrak{w}_{\alpha_{n+1}}}_{\neq0},\underbrace{0,\cdots,0}_{m\: times},\underbrace{\mathfrak{w}_{\alpha_{m+3}}\cdots\mathfrak{w}_{\alpha_{m+n+2}}}_{\neq0},0,0,\cdots) \\
\end{array}\]}
 We claim that $\{\{\sigma_{\varphi,\mathfrak{w}}^n(x^m)\}_{n\geq1}:m\geq1\}$ is a collection of pairwise disjoint one to one sequences. 
For this
aim, consider $(p,q),(s,t)\in{\mathbb N}\times{\mathbb N}$ with
$\sigma_{\varphi,\mathfrak{w}}^q(x^p)= \sigma_{\varphi,\mathfrak{w}}^t(x^s)$.
By $(y^{p,q}_\alpha)_{\alpha\in\Gamma}=\sigma_{\varphi,\mathfrak{w}}^q(x^p)= \sigma_{\varphi,\mathfrak{w}}^t(x^s)=(y^{s,t}_\alpha)_{\alpha\in\Gamma}$ we have:
\begin{equation}\label{kheybar10}
p+q+2=\max\{k\geq1:y^{p,q}_{\alpha_k}\neq0\}=\max\{k\geq1:y^{s,t}_{\alpha_k}\neq0\}=s+t+2
\end{equation}
thus
\begin{equation}\label{kheybar20}
\begin{array}{rcl}
q+1 & = & \max\{k\geq1:y^{p,q}_{\alpha_k}\neq0,k\neq p+q+2\} \\
& = &  \max\{k\geq1:y^{p,q}_{\alpha_k}\neq0,k\neq s+t+2\}\\
& = & \max\{k\geq1:y^{s,t}_{\alpha_k}\neq0,k\neq s+t+2\}=t+1\:.
\end{array}
\end{equation}
Equations \ref{kheybar10} and \ref{kheybar20} lead us to $(p,q)=(s,t)$.
Thus $\{\{\sigma_{\varphi,\mathfrak{w}}\restriction_{\mathop{\bigoplus}\limits_{\Gamma}F}^n(x^m)\}_{n\geq1}:m\geq1\}$ is a collection of pairwise disjoint one to one sequences
and $+\infty={\mathsf o}(\sigma_{\varphi,\mathfrak{w}}\restriction_{\mathop{\bigoplus}\limits_{\Gamma}F})=\eset(\sigma_{\varphi,\mathfrak{w}}\restriction_{\mathop{\bigoplus}\limits_{\Gamma}F})$.
\\
(2 $\Rightarrow$ 3): It is obvious.
\\
(3 $\Rightarrow$ 4): Suppose ${\mathsf o}(\sigma_{\varphi,\mathfrak{w}}\restriction_{\mathop{\bigoplus}\limits_{\Gamma}F})=\eset(\sigma_{\varphi,\mathfrak{w}}\restriction_{\mathop{\bigoplus}\limits_{\Gamma}F})>0$, thus there exists 
\linebreak
$x\in \mathop{\bigoplus}\limits_{\Gamma}F$
such that $\{\sigma_{\varphi,\mathfrak{w}}^n(x)\}_{n\geq1}(=\{\sigma_{\varphi,\mathfrak{w}}\restriction_{\mathop{\bigoplus}\limits_{\Gamma}F}^n(x)\}_{n\geq1})$. There exist
$\beta_1,\ldots,\beta_p\in\Gamma$ and $r_1,\ldots,r_p\in F$ such that $x=r_1\mathsf{e}_{\beta_1}+\cdots+r_p\mathsf{e}_{\beta_p}$.
Note that $\{\sigma_{\varphi,\mathfrak{w}}^n(x):n\geq1\}=\{r_1\sigma_{\varphi,\mathfrak{w}}^n(\mathsf{e}_{\beta_1})+\cdots+
r_p\sigma_{\varphi,\mathfrak{w}}^n(\mathsf{e}_{\beta_p}):n\geq1\}$ is an infinite subset of 
\[\{r_1\sigma_{\varphi,\mathfrak{w}}^{n_1}(\mathsf{e}_{\beta_1})+\cdots+
r_p\sigma_{\varphi,\mathfrak{w}}^{n_p}(\mathsf{e}_{\beta_p}):n_1,\ldots,n_p\geq1\}\:.\]
So there exists $j\in\{1,\ldots,p\}$ such that
$\{\sigma_{\varphi,\mathfrak{w}}^{n}(\mathsf{e}_{\beta_j}):n\geq1\}$ is an infinite set. Therefore
$\{\sigma_{\varphi,\mathfrak{w}}^{n}(\mathsf{e}_{\beta_j})\}_{n\geq1}$ is a one to one sequence.
\\
(4 $\Rightarrow$ 1) Use Lemma~\ref{kheybar60}.
\end{proof}
 In the following example we show one may choose appropriate $\varphi$ and $\mathfrak{w}$ such that
$\eset(\sigma_{\varphi,\mathfrak{w}}\restriction_{\mathop{\bigoplus}\limits_{\Gamma}F})\neq
\eset(\sigma_{\varphi,\mathfrak{w}})$, hence $\eset(\sigma_{\varphi,\mathfrak{w}}\restriction_{\mathop{\bigoplus}\limits_{\Gamma}F})<
\eset(\sigma_{\varphi,\mathfrak{w}})$.
\begin{exam}
Consider $\eta:\mathbb{N}\to\mathbb{N}$ as in Example~\ref{kheybar40}, and $\mathfrak{u}=(1)_{n\in\mathbb{N}}$, then
$\eset(\sigma_{\eta}\restriction_{\mathop{\bigoplus}\limits_{\mathbb{N}}F})=
\eset(\sigma_{\eta,\mathfrak{v}}\restriction_{\mathop{\bigoplus}\limits_{\mathbb{N}}F})=0<+\infty=
\eset(\sigma_{\eta,\mathfrak{v}})=\eset(\sigma_{\eta})$.
\end{exam}
\section{Contravariant set--theoretical entropy of the finite fibre $\sigma_{\varphi,\mathfrak{w}}\restriction_{\mathop{\bigoplus}\limits_{\Gamma}F}$ whenever
$\mathop{\bigoplus}\limits_{\Gamma}F$ is $\sigma_{\varphi,\mathfrak{w}}-$invariant}\label{maryam50}
\noindent As it has been mentioned in Convention~\ref{kheybar75}, in this section we assume
$\sigma_{\varphi,\mathfrak{w}}(\mathop{\bigoplus}\limits_{\Gamma}F)\subseteq \mathop{\bigoplus}\limits_{\Gamma}F$
(or equivalently, by Lemma~\ref{kheybar30},  $\varphi\restriction_{{\rm supp}(\mathfrak{w})}:{\rm supp}(\mathfrak{w})\to\Gamma$ is of finite fibre).
In this section we show $\sigma_{\varphi,\mathfrak{w}}\restriction_{\mathop{\bigoplus}\limits_{\Gamma}F}:\mathop{\bigoplus}\limits_{\Gamma}F
\to \mathop{\bigoplus}\limits_{\Gamma}F$ is of finite fibre if and only if 
$\sigma_{\varphi,\mathfrak{w}}:F^\Gamma\to F^\Gamma$ is of finite fibre and in the above case
$\ecset(\sigma_{\varphi,\mathfrak{w}}\restriction_{\mathop{\bigoplus}\limits_{\Gamma}F})=+\infty$ if and only if there exists
a $\varphi-$orbit one to one sequence in ${\rm supp}(\mathfrak{w})$.
\begin{proposition}\label{kheybar90}
$\sigma_{\varphi,\mathfrak{w}}\restriction_{\mathop{\bigoplus}\limits_{\Gamma}F}:\mathop{\bigoplus}\limits_{\Gamma}F
\to \mathop{\bigoplus}\limits_{\Gamma}F$ is of finite fibre if and only if 
$\sigma_{\varphi,\mathfrak{w}}:F^\Gamma\to F^\Gamma$ is of finite fibre.
\end{proposition}
\begin{proof}
Using the proof of Lemma~\ref{badr30}
for each $x=(x_\alpha)_{\alpha\in\Gamma}\in \mathop{\bigoplus}\limits_{\Gamma}F$, we have
\[\sigma_{\varphi,\mathfrak w}\restriction_{\mathop{\bigoplus}\limits_{\Gamma}F}^{-1}(\sigma_{\varphi,\mathfrak w}(x))=
 \{(y_\alpha)_{\alpha\in\Gamma}\in\mathop{\bigoplus}\limits_{\Gamma}F
:\forall\alpha\in\varphi(\rm{supp}(\mathfrak{w})),
\SP
    y_\alpha= x_\alpha\}
\]
hence $\sigma_{\varphi,\mathfrak w}\restriction_{\mathop{\bigoplus}\limits_{\Gamma}F}^{-1}(\sigma_{\varphi,\mathfrak w}(x))$ and $\mathop{\bigoplus}\limits_{\Gamma\setminus
\varphi(\rm{supp}(\mathfrak{w}))}F$ are equipotent. Therefore $\sigma_{\varphi,\mathfrak w}\restriction_{\mathop{\bigoplus}\limits_{\Gamma}F}$ is of finite
fibre if and only if $\Gamma\setminus
\varphi(\rm{supp}(\mathfrak{w}))$ is finite. So by Lemma~\ref{badr30}, 
\linebreak
$\sigma_{\varphi,\mathfrak{w}}\restriction_{\mathop{\bigoplus}\limits_{\Gamma}F}:\mathop{\bigoplus}\limits_{\Gamma}F
\to \mathop{\bigoplus}\limits_{\Gamma}F$ is of finite fibre if and only if 
$\sigma_{\varphi,\mathfrak{w}}:F^\Gamma\to F^\Gamma$ is of finite fibre.
\end{proof}
\begin{lemma}\label{kheybar80}
Suppose $\sigma_{\varphi,\mathfrak{w}}\restriction_{\mathop{\bigoplus}\limits_{\Gamma}F}:\mathop{\bigoplus}\limits_{\Gamma}F
\to \mathop{\bigoplus}\limits_{\Gamma}F$ is of finite fibre, then:
\begin{itemize}
\item[1.] ${\rm sc}(\sigma_{\varphi,\mathfrak{w}}\restriction_{\mathop{\bigoplus}\limits_{\Gamma}F})\subseteq{\rm sc}(\sigma_{\varphi,\mathfrak{w}})\cap \mathop{\bigoplus}\limits_{\Gamma}F\subseteq\left\{(x_\alpha)_{\alpha\in\Gamma}\in \mathop{\bigoplus}\limits_{\Gamma}F:\forall\beta\in\Upsilon,
\:\:x_\beta=0\right\}$,
\item[2.] $\sigma_{\varphi\restriction_\Lambda,\mathfrak{w}^\Lambda}(\mathop{\bigoplus}\limits_{\Lambda}F)\subseteq \mathop{\bigoplus}\limits_{\Lambda}F$,
\item[3.] $\sigma_{\varphi\restriction_\Lambda,\mathfrak{w}^\Lambda}\restriction_{\mathop{\bigoplus}\limits_{\Lambda}F}:
	\mathop{\bigoplus}\limits_{\Lambda}F\to \mathop{\bigoplus}\limits_{\Lambda}F$ is of finite fibre,
\item[4.]  $\mathsf{a}(\sigma_{\varphi,\mathfrak{w}}\restriction_{\mathop{\bigoplus}\limits_{\Gamma}F})=\mathsf{a}(\sigma_{\varphi\restriction_\Lambda,\mathfrak{w}^\Lambda}\restriction_{\mathop{\bigoplus}\limits_{\Lambda}F})$.
\end{itemize}
\end{lemma}
\begin{proof}
1) Use Lemma~\ref{badr10}.
\\
2) $\sigma_{\varphi,\mathfrak{w}}(\mathop{\bigoplus}\limits_{\Gamma}F)\subseteq \mathop{\bigoplus}\limits_{\Gamma}F$,
i.e.,  $\varphi\restriction_{{\rm supp}(\mathfrak{w})}:{\rm supp}(\mathfrak{w})\to\Gamma$ is of finite fibre. Since 
\linebreak
$\Lambda\subseteq
{\rm supp}(\mathfrak{w})$, $\varphi\restriction_{\Lambda}:\Lambda\to\Gamma$ is of finite fibre too. Thus
$\varphi\restriction_{\Lambda}:\Lambda\to\varphi(\Lambda)$ is of finite fibre. Therefore 
$\varphi\restriction_{\Lambda}:\Lambda\to\Lambda$ is of finite fibre (use $\varphi(\Lambda)\subseteq\Lambda$).
By ${\rm supp}(\mathfrak{w}^\Lambda)=\Lambda$ and 
Lemma~\ref{kheybar30}, $\sigma_{\varphi\restriction_\Lambda,\mathfrak{w}^\Lambda}(\mathop{\bigoplus}\limits_{\Lambda}F)\subseteq \mathop{\bigoplus}\limits_{\Lambda}F$.
\\
3) By~Proposition~\ref{kheybar90}, $\sigma_{\varphi,\mathfrak{w}}:F^\Gamma\to F^\Gamma$ is of finite fibre. 
By~Corollary~\ref{jadid100}, 
\linebreak
$\sigma_{\varphi\restriction_\Lambda,\mathfrak{w}^\Lambda}:F^\Lambda\to F^\Lambda$
is of finite fibre. By~Proposition~\ref{kheybar90}, $\sigma_{\varphi\restriction_\Lambda,\mathfrak{w}^\Lambda}\restriction_{\mathop{\bigoplus}\limits_{\Lambda}F}:
	\mathop{\bigoplus}\limits_{\Lambda}F\to \mathop{\bigoplus}\limits_{\Lambda}F$ is of finite fibre.
\\
4) Use (1) and a similar proof described in Corollary~\ref{badr25}.
\end{proof}
\begin{lemma}\label{kheybar200}
Suppose $\sigma_{\varphi,\mathfrak{w}}\restriction_{\mathop{\bigoplus}\limits_{\Gamma}F}:\mathop{\bigoplus}\limits_{\Gamma}F
\to \mathop{\bigoplus}\limits_{\Gamma}F$ is of finite fibre, then the following statements are equivalent:

1. $\ecset(\sigma_{\varphi,\mathfrak{w}}\restriction_{\mathop{\bigoplus}\limits_{\Gamma}F})=+\infty$,

2. $\ecset(\sigma_{\varphi,\mathfrak{w}}\restriction_{\mathop{\bigoplus}\limits_{\Gamma}F})>0$,

3. there exists a one to one $\varphi-$orbit sequence in ${\rm supp}(\mathfrak{w})$.
\\
$\:$ \\
So:
{\small \[\ecset(\sigma_{\varphi,\mathfrak{w}}\restriction_{\mathop{\bigoplus}\limits_{\Gamma}F})=\left\{\begin{array}{lc}
+\infty\:, &  if \: there \: exists \: a \: one \: to \: one \: \varphi-orbit \: sequence \: in \: {\rm supp}(\mathfrak{w}) \:, \\
0\:, & otherwise\:.
\end{array}\right.\]}
$\:$ \\ $\:$
\end{lemma}
\begin{proof}
(1 $\Rightarrow$ 2): It is obvious.
\\
(2 $\Rightarrow$ 3): Suppose
$\mathsf{a}(\sigma_{\varphi,\mathfrak{w}}\restriction_{\mathop{\bigoplus}\limits_{\Gamma}F})=
\ecset(\sigma_{\varphi,\mathfrak{w}}\restriction_{\mathop{\bigoplus}\limits_{\Gamma}F})>0$.
By Lemma~\ref{kheybar80}(4), 
$\mathsf{a}(\sigma_{\varphi\restriction_\Lambda,\mathfrak{w}^\Lambda}\restriction_{\mathop{\bigoplus}\limits_{\Lambda}F})>0$,
hence there exists a 
one to one $\sigma_{\varphi\restriction_\Lambda,\mathfrak{w}^\Lambda}-$anti--orbit sequence 
$\{z_n\}_{n\geq1}$ in $\mathop{\bigoplus}\limits_{\Lambda}F$.
We may suppose $z_n\neq(0)_{\alpha\in\Lambda}$
thus 
${\rm supp}(z_n)\neq\varnothing$ (for all $n\geq1$). 
For $n\geq1$ let $z_n=(z^n_\alpha)_{\alpha\in\Lambda}=\mathop{\Sigma}\limits_{\beta\in{\rm supp}(z_n)}z^n_\beta\mathsf{e}_\beta^\Lambda$, 
then (using a similar method described in the proof of Lemma~\ref{kheybar60}~(1))
\[
\begin{array}{rcl}
z_n & = & \sigma_{\varphi\restriction_\Lambda,\mathfrak{w}^\Lambda}(z_{n+1}) =
	\sigma_{\varphi\restriction_\Lambda,\mathfrak{w}^\Lambda}(\mathop{\Sigma}\limits_{\beta\in{\rm supp}
		(z_{n+1})}z^{n+1}_\beta\mathsf{e}_\beta^\Lambda) \\ && \\
& = & \mathop{\Sigma}\limits_{\beta\in{\rm supp}
		(z_{n+1})}z^{n+1}_\beta\sigma_{\varphi\restriction_\Lambda,		
		\mathfrak{w}^\Lambda}(\mathsf{e}_\beta^\Lambda) \\ && \\
& = & \mathop{\Sigma}\limits_{\beta\in{\rm supp}(z_{n+1})}
	\bigg(\mathop{\Sigma}\limits_{\alpha\in\varphi\restriction_\Lambda^{-1}(\beta)\cap{\rm supp}
	(\mathfrak{w}^\Lambda)}z^{n+1}_\beta\mathfrak{w}_\alpha\mathsf{e}_\alpha\bigg) \\ && \\
& = & \mathop{\Sigma}\limits_{\beta\in{\rm supp}(z_{n+1})}
	\bigg(\mathop{\Sigma}\limits_{\alpha\in\varphi\restriction_\Lambda^{-1}(\beta)}z^{n+1}_\beta\mathfrak{w}_\alpha\mathsf{e}_\alpha\bigg)\:, \\ &&
\end{array}\]
for all $\beta\in{\rm supp}(z_{n+1})$, $z^{n+1}_\beta\neq0$, thus 
\[{\rm supp}(z_{n})=\varphi\restriction_\Lambda^{-1}({\rm supp}(z_{n+1}))\:.\]
Inductively we show:
\begin{equation}\label{kheybar100}
\forall n\geq1 \:\: {\rm supp}(z_n)\subseteq \{\varphi^k(\alpha):\alpha\in{\rm supp}(z_1)\cup(\Lambda\setminus\varphi(\Lambda)),k\geq0\}\:.
\end{equation}
For this aim let $H:=\{\varphi^k(\alpha):\alpha\in{\rm supp}(z_1)\cup(\Lambda\setminus\varphi(\Lambda)),k\geq0\}$ and use the following steps:
\begin{itemize}
\item Obviously ${\rm supp}(z_1)\subseteq H$.
\item[]
\item Consider $m\geq1$ such that ${\rm supp}(z_m)\subseteq \{\varphi^k(\alpha):\alpha\in{\rm supp}(z_1)\cup(\Lambda\setminus\varphi(\Lambda)),k\geq0\}$. If $\beta\in {\rm supp}(z_{m+1})$, then one of the following 
conditions occur:
	\begin{itemize}
	\item Case 1: $\beta\notin\varphi(\Lambda)$. In this case $\beta\in \Lambda\setminus\varphi(\Lambda)\subseteq H$.
	\item Case 2: $\beta\in\varphi(\Lambda)$. In this case choose $\theta\in\Lambda$ such that $\varphi(\theta)=\beta$ thus 
\linebreak
	$\theta\in \varphi\restriction_\Lambda^{-1}({\rm supp}(z_{m+1}))={\rm supp}(z_m)\subseteq H$. Therefore
	$\beta=\varphi(\theta)\subseteq\varphi(H)$.
\item[]
	\end{itemize}
Using the above cases $\beta\in H\cup \varphi(H)\subseteq H$. Hence ${\rm supp}(z_{m+1})\subseteq H$.
\end{itemize}
Therefore~\ref{kheybar100} is valid. By~\ref{kheybar80}(3) the set $\Lambda\setminus\varphi(\Lambda)$ is finite.
So ${\rm supp}(z_1)\cup(\Lambda\setminus\varphi(\Lambda))=\{\theta_1,\ldots,\theta_p\}$ is finite. Suppose
\[V_i:=\{r_1\mathsf{e}_{\varphi^{n_1}(\theta_i)}^\Lambda+\cdots+r_k\mathsf{e}_{\varphi^{n_k}(\theta_i)}^\Lambda:
k\geq1, r_1,\ldots,r_k\in F, n_1,\ldots,n_k\geq0\}\]
is the linear subspace generated by $\{\mathsf{e}_{\varphi^{n}(\theta_i)}:n\geq0\}$ ($1\leq i\leq p$).
Hence 
\vspace*{3mm}
{\small\begin{eqnarray*}
\{z_n&:&n\geq1\} = \left\{\mathop{\Sigma}\limits_{\beta\in{\rm supp}(z_n)}z^n_\beta\mathsf{e}_\beta^\Lambda:n\geq1\right\} \\
	& \subseteq & \{r_1\mathsf{e}_{\beta_1}^\Lambda+\cdots+r_m\mathsf{e}_{\beta_m}^\Lambda:
		m\geq1,\beta_1,\ldots,\beta_m\in\bigcup\{{\rm supp}(z_n):n\geq1\}, r_1,\ldots,r_m\in F\} \\
	& \mathop{\subseteq}\limits^{\ref{kheybar100}} &  \{r_1\mathsf{e}_{\beta_1}^\Lambda+\cdots+r_m\mathsf{e}_{\beta_m}^\Lambda:
		m\geq1,\beta_1,\ldots,\beta_m\in H, r_1,\ldots,r_m\in F\} \\
	& = & V_1+\cdots +V_p\:.
\end{eqnarray*}}
$\{z_n:n\geq1\}$ is an infinite subset of $V_1+\cdots +V_p$. Therefore there exists $j\in\{1,\ldots,p\}$ such that $V_j$ is infinite,
thus
$\{\mathsf{e}_{\varphi^{n}(\theta_j)}:n\geq0\}$ is infinite, i.e. $\{\varphi^{n}(\theta_j):n\geq0\}$ is infinite
and $\{\varphi^{n}(\theta_j)\}_{n\geq1}$ is a one to one $\varphi\restriction_\Lambda-$orbit sequence.
In particular $\{\varphi^{n}(\theta_j)\}_{n\geq1}$ is a one to one $\varphi-$orbit sequence and
for all $n\geq1$, $\varphi^{n}(\theta_j)\in\Lambda\subseteq{\rm supp}(\mathfrak{w})$. 
The sequence $\{\varphi^{n}(\theta_j)\}_{n\geq1}$ satisfies (3).
\\
(3 $\Rightarrow$ 1): Suppose $\{\beta_n\}_{n\geq1}$ is a one to one sequence in ${\rm supp}(\mathfrak{w})$.
For $r\in F$ consider $r^*$ as \ref{kheybar300} in the proof of Theorem~\ref{badr40}. Also, consider 
$\eta:\mathop{\mathbb{N}\to\mathbb{N}\:\:}\limits_{n\mapsto n+1}$ and
surjection
\linebreak
$k:\mathop{\bigoplus}\limits_{\Gamma}F\to\mathop{\bigoplus}\limits_{\mathbb{N}}\{0,1\}$ with
$k((x_\alpha)_{\alpha\in\Gamma})=(x_{\beta_n}^*)_{n\in\mathbb{N}}$ ($(x_\alpha)_{\alpha\in\Gamma}\in \mathop{\bigoplus}\limits_{\Gamma}F$).
\\
For each
$(x_\alpha)_{\alpha\in\Gamma}\in \mathop{\bigoplus}\limits_{\Gamma}F$, we have:
\begin{eqnarray*}
k(\sigma_{\varphi,\mathfrak{w}}((x_\alpha)_{\alpha\in\Gamma})) & = &
    k((\mathfrak{w}_\alpha x_{\varphi(\alpha)})_{\alpha\in\Gamma}) =
    ((\mathfrak{w}_{\beta_n} x_{\varphi(\beta_n)})^*)_{n\in\mathbb{N}} \\
& = & (\mathfrak{w}_{\beta_n}^* x_{\beta_{n+1}}^*)_{n\in\mathbb{N}}\mathop{=}\limits^{(\beta_k\in{\rm supp}(\mathfrak{w}))}
    (x_{\beta_{n+1}}^*)_{n\in\mathbb{N}} \\
& = & \sigma_\eta((x_{\beta_n}^*)_{n\in\mathbb{N}})=\sigma_\eta(k((x_\alpha)_{\alpha\in\Gamma}))
\end{eqnarray*}
Hence $k\circ(\sigma_{\varphi,\mathfrak{w}}\restriction_{\mathop{\bigoplus}\limits_{\Gamma}F})=
(\sigma_\eta\restriction_{\mathop{\bigoplus}\limits_{\mathbb{N}}\{0,1\}})\circ k$ and the following diagram commutes:
\[\xymatrix{\mathop{\bigoplus}\limits_{\Gamma}F\ar[r]^{\sigma_{\varphi,\mathfrak{w}}\restriction_{\mathop{\bigoplus}\limits_{\Gamma}F}}\ar[d]_k & \mathop{\bigoplus}\limits_{\Gamma}F\ar[d]^k \\
\mathop{\bigoplus}\limits_{\mathbb{N}}\{0,1\}\ar[r]^{\sigma_\eta\restriction_{\mathop{\bigoplus}\limits_{\mathbb{N}}\{0,1\}}} & \mathop{\bigoplus}\limits_{\mathbb{N}}\{0,1\} }\]
By ~\cite[Lemma 3.2.22 (b)]{anna} we have:
\begin{equation}\label{kheybar400}
\ecset(\sigma_\eta\restriction_{\mathop{\bigoplus}\limits_{\mathbb{N}}\{0,1\}})\leq\ecset(\sigma_{\varphi,\mathfrak{w}}\restriction_{\mathop{\bigoplus}\limits_{\Gamma}F})\:.
\end{equation}
Let
\[a_n^m=(\underbrace{0,\cdots,0}_{n\: times},\underbrace{1,\cdots,1}_{m\: times},0,0,0,\cdots)\SP(n,m\geq1)\:.\]
Then $\{a_n^1\}_{n\geq1}, \{a_n^2\}_{n\geq1},\{a_n^3\}_{n\geq1},\ldots$ are pairwise disjoint one to one
$\sigma_\eta\restriction_{\mathop{\bigoplus}\limits_{\mathbb{N}}\{0,1\}}-$anti--orbit sequences, thus
$\ecset(\sigma_\eta\restriction_{\mathop{\bigoplus}\limits_{\mathbb{N}}\{0,1\}})=\mathsf{a}(
\sigma_\eta\restriction_{\mathop{\bigoplus}\limits_{\mathbb{N}}\{0,1\}})=+\infty$ which completes the proof by~\ref{kheybar400}.
\end{proof}
\newpage
\section*{Acknowledgement} 
\noindent The authors wish to express their thanks to the anonymous referee for his/her useful guides.
Also with thanks to the research division of Farhangian University for the grant which supported this research.

\vspace{2mm}
\[\underline{\SP\SP\SP\SP\SP\SP\SP\SP\SP\SP\SP\SP\SP\SP\SP\SP}\]
\noindent {\bf Fatemah Ayatollah Zadeh Shirazi}, Faculty
of Mathematics, Statistics and Computer Science, College of
Science, University of Tehran, Enghelab Ave., Tehran, Iran
(f.a.z.shirazi@ut.ac.ir)
\\
{\bf Arezoo Hosseini},
Department of Mathematics Education, Farhangian University,
\linebreak
P.~O.~Box 14665--889, Tehran, Iran
(a.hosseini@cfu.ac.ir)
\\
{\bf Lida Mousavi}, Department of Mathematics, Yadegar-e-Imam Khomeini (RAH), Shahre
Rey Branch, Islamic Azad University, Tehran, Iran (mousavi.lida@gmail.com)
\\
{\bf Reza Rezavand}, School of Mathematics, Statistics
and Computer Science, College of Science, University of Tehran,
Enghelab Ave., Tehran, Iran (rezavand@ut.ac.ir)

\end{document}